\documentclass{amsart}

\usepackage[colorlinks=True, citecolor=blue, linkcolor=blue]{hyperref}
\usepackage{amssymb}
\usepackage[normalem]{ulem}
\usepackage[utf8]{inputenc}
\usepackage{mathtools}
\usepackage[nameinlink]{cleveref}
\usepackage[dvipsnames]{xcolor}

\usepackage{braket}

\theoremstyle{definition}
\newtheorem{definition}{Definition}[section]

\theoremstyle{plain}
\newtheorem{theorem}[definition]{Theorem}
\newtheorem*{theorem*}{Theorem}
\newtheorem{lemma}[definition]{Lemma}
\newtheorem{proposition}[definition]{Proposition}
\newtheorem{corollary}[definition]{Corollary}

\DeclareMathOperator{\Tr}{Tr}

\DeclareMathOperator{\rk}{rk}

\DeclareMathOperator{\id}{id}

\DeclareMathOperator{\GL}{GL}

\DeclareMathOperator{\im}{im}

\DeclareMathOperator{\Nr}{Nr}

\newcommand{\QQ}{\mathbb{Q}}
\newcommand{\FF}{\mathbb{F}}

\newcommand{\ZZ}{\mathbb{Z}}

\renewcommand{\O}{\mathcal{O}}
\renewcommand\o{
  \mathchoice
    {{\scriptstyle\mathcal{O}}}%
    {{\scriptstyle\mathcal{O}}}%
    {{\scriptscriptstyle\mathcal{O}}}%
    {\scalebox{.7}{$\scriptscriptstyle\mathcal{O}$}}%
  }

\makeatletter
\newcommand{\bigperp}{%
  \mathop{\mathpalette\bigp@rp\relax}%
  \displaylimits
}

\newcommand{\bigp@rp}[2]{%
  \vcenter{
    \m@th\hbox{\scalebox{\ifx#1\displaystyle2.1\else1.5\fi}{$#1\perp$}}
  }%
}
\makeatother

\DeclareMathOperator{\scale}{\mathfrak{s}}
\DeclareMathOperator{\norm}{\mathfrak{n}}

\DeclareMathOperator{\Norm}{Nr}

\newcommand{\pO}{\mathfrak{P}}

\newcommand{\mc}[1]{\mathcal{#1}}

\newcommand{\Fi}{\mathbb F}
\newcommand{\floor}[1]{\left\lfloor #1\right\rfloor}

\newcommand{\sprod}[2]{\langle #1, #2 \rangle}
\newcommand{\sprodq}[2]{\langle #1 \rangle}

\newcommand{\fp}{\mathfrak{p}}
\newcommand{\FP}{\mathfrak{P}}
\newcommand{\FA}{\mathfrak{A}}
\newcommand{\FD}{\mathfrak{D}}

\newcommand{\RN}[1]{\textup{\uppercase\expandafter{\romannumeral#1}}}

\makeatletter
\newcommand*{\defeq}{\mathrel{\rlap{%
                     \raisebox{0.3ex}{$\m@th\cdot$}}%
                     \raisebox{-0.3ex}{$\m@th\cdot$}}%
                     =}

\hyphenation{sub module}


\title[]{Generation of Local Unitary Groups}

\author{Simon Brandhorst}

\address{Simon Brandhorst,
Fakult\"at f\"ur Mathematik und Informatik, Universit\"at des Saarlandes, Campus E2.4, 66123 Saarbr\"ucken, Germany}
\email{brandhorst@math.uni-sb.de}

\author{Tommy Hofmann}
\address{Tommy Hofmann,
Naturwissenschaftlich-Technische Fakult\"at, Universit\"at Siegen, Walter-Flex-Straße 3, 57068 Siegen, Germany}
\email{thofmann@mathematik.uni-kl.de}
\makeatother
\thanks{Gefördert durch die Deutsche Forschungsgemeinschaft (DFG) – Projektnummer 286237555 – TRR 195.
Funded by the Deutsche Forschungsgemeinschaft (DFG, German Research Foundation) – Project-ID 286237555 – TRR 195.}

\address{Sven Manthe,
Fakult\"at f\"ur Mathematik und Informatik, Universit\"at des Saarlandes, Campus E2.4, 66123 Saarbr\"ucken, Germany}
\email{sven.manthe@gmerek-manthe.de}

\makeatletter
\@namedef{subjclassname@2020}{%
  \textup{2020} Mathematics Subject Classification}
\makeatother

\subjclass[2020]{Primary 11E39, 11E57}
\keywords{Hermitian lattices, local fields, unitary groups.}
\date{\today}

\makeatletter
\let\@wraptoccontribs\wraptoccontribs
\makeatother

\begin{document}
\begin{abstract}
  Let $E$ be a two-dimensional \'etale algebra over a non-Archimedean local
  field $K$ of characteristic zero. We show that the unitary group of a
  non-degenerate hermitian lattice over $E$ is generated by symmetries and
  rescaled Eichler isometries. In the appendix we show that unless $E/K$ is a
  ramified dyadic field extension and the residue field has two elements,
  symmetries suffice.
\end{abstract}
\contrib[With an appendix by]{Sven Manthe}
\maketitle

\section{Introduction}
  The factorization of elements of a group over a distinguished set of generators
  is a classical topic in geometry and algebra. A prominent example of such a
  factorization is provided by Gaussian reduction, which shows that every invertible matrix over a field factors into elementary matrices.
  Similar results hold in other classical groups, for example, orthogonal or unitary groups
  of quadratic or hermitian spaces over fields (\cite{dieudonne1948}).
  For instance the Cartan--Dieudonné theorem asserts that an orthogonal
  transformation of an $n$-dimension symmetric bilinear space (in characteristic not two) factors into at most $n$ reflections.

  When extending the theory of classical groups from fields to arithmetic rings like rings of integers in local or global fields, factorization (or generation) theorems play once more an important role, for example, when enumerating isometry classes in a genus of quadratic or hermitian lattices.

  Much work has been spent on generalizing the theorem of Cartan--Dieudonn\'e to
  orthogonal groups of quadratic lattices over local fields of characteristic zero.
  If the local field is non-dyadic, that is, $2$ is a unit in the ring of integers,
  it was already observed and exploited by Kneser~\cite{Kneser1956} in the study of spinor genera that the orthogonal group is again generated by symmetries.
  As is typical for the theory of quadratic forms, the situation becomes much more involved over a dyadic field $K$. In~\cite{Omeara-Pollack1965} O'Meara and Pollack have shown that
  in case that $2$ is a prime element of $K$ and $K \neq \QQ_2$, the orthogonal group is still generated by symmetries, while for $K = \QQ_2$ this statement is false and additional generators (Eichler isometries) are necessary.
  On the other hand, investigating the case where $2$ is not a prime element anymore ``opens a Pandora's box of technicalities'' \cite{Omeara-Pollack1965b}.
  In this situation, only for unimodular lattices were O'Meara and Pollack able to show that
  the orthogonal group is generated by symmetries (and Eichler isometries for explicitly determined  exceptional cases).
  In the context of determining the integral spinor norms of orthogonal lattices over dyadic local fields, additional generation statements were obtained by Xu~\cite{Xu1995}. Though the general generation problem remains open.

  Recent developments in discrete holomorphic dynamics have
  renewed interest in hermitian lattices. Namely, automorphisms of complex surfaces with a given topological entropy or Siegel disks can be constructed using isometries of an integer lattice with a given characteristic polynomial \cite{mcmullen2016, OguisoYu2020}. These isometries are described in terms of hermitian lattices over certain number fields \cite{bayer-taelman2020, kirschmer2019unimod}. When considering local to global questions, a description of the unitary group at all local places is needed.

  In this paper, we solve the generation problem for unitary groups of hermitian lattices over non-Archimedean local fields of characteristic zero.
  More precisely we consider hermitian lattices over two-dimensional étale algebras.
  These are precisely the hermitian lattices obtained by completing a hermitian lattice over a number field at a prime ideal. The following theorem is our main result.
  \begin{theorem*}
    Let $E$ be a two-dimensional \'etale algebra over a non-Archimedean local field $K$ of characteristic zero and $L$ be a hermitian lattice over $E$.
    Then its unitary group $U(L)$ is generated by symmetries and rescaled Eichler isometries.
    If $E/K$ is not ramified dyadic, then symmetries suffice.
  \end{theorem*}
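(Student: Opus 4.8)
The plan is to argue by induction on the rank of $L$. Fixing $g \in U(L)$, the goal at each inductive step is to produce a product $\sigma$ of symmetries and (only where needed) rescaled Eichler isometries so that $\sigma g$ fixes a primitive vector $x$ spanning a rank-one orthogonal direct summand, or fixes a primitive rank-two summand (a hyperbolic plane in the isotropic case). Writing $L = Ex \perp (x^\perp \cap L)$, the isometry $\sigma g$ then restricts to a unitary isometry of the complement $x^\perp \cap L$, which has strictly smaller rank, and the induction closes once one checks that the generators of the complement extend to generators of $L$. The base cases are the lattices of rank at most two, and in particular the anisotropic kernels and the hyperbolic plane, which I would dispatch directly.

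The engine of the induction is a transitivity statement: I would show that the subgroup $G \le U(L)$ generated by symmetries (together with rescaled Eichler isometries in the exceptional case) acts transitively on the set of vectors $x \in L$ sharing a fixed norm $h(x,x)$ and a fixed norm ideal $h(x,L)$, where $h$ denotes the hermitian form. Granting this, for a given $g$ I choose $x$ to be a suitable primitive vector, for instance one of minimal scale generating a modular Jordan constituent. Since $g$ preserves $h$ and the scale filtration, $g(x)$ has the same invariants as $x$, so some $\sigma \in G$ carries $g(x)$ back to $x$; then $\sigma g$ fixes $x$ and the reduction above applies.

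To establish the transitivity I would pass to a Jordan splitting of $L$ into $\fp^{a_i}$-modular constituents $L_i$ and reduce to moving vectors within and between constituents of equal scale. The construction of the requisite symmetries then divides according to the type of $E/K$. When $E \cong K \times K$ is split, $U(L)$ is a general linear group and the claim reduces to Gaussian elimination by transvections, which are products of symmetries. When $E/K$ is a field extension that is unramified (of any residue characteristic) or ramified non-dyadic, enough norm-one units are available that, given $x,y$ with $h(x,x) = h(y,y)$, at least one of $x \pm y$ (after rescaling by a unit) is anisotropic, and the associated symmetry sends $x$ to $y$; this is the direct hermitian analogue of the reflection arguments of Kneser and O'Meara.

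The main obstacle is the ramified dyadic case, where $2$ is not a unit and norm-one units are scarce, so the elementary ``reflect across $x-y$'' device can fail: neither $x+y$ nor $x-y$ need be anisotropic, and the symmetries that do exist are too rigid to realize an arbitrary transvection. This is the hermitian counterpart of the $\QQ_2$-obstruction in O'Meara--Pollack. Here I expect the rescaled Eichler isometries to supply exactly the transvections missing from the symmetry group, and the delicate part is a complete analysis of binary modular lattices over a ramified dyadic $E$: one must determine, in terms of residue-field data, which isometries factor through symmetries alone and which genuinely require an Eichler isometry. The case of residue field $\FF_2$ is precisely where the symmetries run out, matching the statement of the theorem; once the binary building blocks are understood, the inductive splitting propagates the result to all ranks.
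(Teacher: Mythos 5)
Your skeleton coincides with the paper's: induction on the rank, a transitivity statement for the subgroup generated by the proposed generators that lets one fix a vector (or hyperbolic pair) adapted to a Jordan splitting, then passage to the orthogonal complement. The gaps sit exactly where the content lies. First, your transitivity lemma is false as stated: the invariants $\sprodq{x}{x}$ and $\sprod{x}{L}$ do not determine the $U(L)$-orbit of $x$. They do not even distinguish primitive from imprimitive vectors --- in the inert case with $|\o/\fp|>2$, take $L=\O e_1\perp\O e_2$ with $\sprodq{e_1}{e_1}=1$, $\sprodq{e_2}{e_2}=p^2$; then $pe_1$ and $x=p\alpha' e_1+\beta e_2$, where $\beta,\alpha'\in\O^\times$ satisfy $\Nr(\alpha')+\Nr(\beta)=1$, share both invariants but lie in different orbits. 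What your induction actually needs is the orbit statement: given $\varphi\in U(L)$, some product of generators carries $\varphi(x)$ back to $x$. But that is literally the inductive step $U(L)=G\cdot\mathrm{Stab}(x)$, so there is no shortcut; proving it for a well-chosen $x$ is where the paper's work sits (\Cref{prop:hyperbolic-refl,prop:line-symmetries} in the unramified cases, \Cref{dyadic:hyperbolic,dyadic:normal-rk1,dyadic:normal-rk2,dyadic:subnormal} in the ramified dyadic case). Second, in the split case $U(L)$ is a general linear group only when $L$ is modular: identifying $U(V)\cong\GL(V_1)$, the group $U(L)$ becomes the simultaneous stabilizer of two $\o$-lattices in $V_1$, a parahoric-type subgroup, and whether its elements factor into symmetries of $L$ is precisely the question, not a consequence of Gaussian elimination. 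Moreover the hardest split subcase is residue field $\FF_2$ (\Cref{lem:symmetry-special-2}), where the ``choose $\epsilon\in\o^\times$ with $1-\epsilon\in\o^\times$'' device your sketch leans on is unavailable; since the theorem asserts that symmetries suffice in the split case, you cannot discharge this by invoking Eichler isometries either.

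In the ramified dyadic case your reduction to ``a complete analysis of binary modular lattices'' does not suffice, because the obstruction is not binary. Whether $\varphi(u)$, for $u$ in the leading Jordan constituent $P$, can be returned to $u$ by symmetries of $L$ depends on the norm and scale of the entire complement $M$ and on whether $L$ splits a hyperbolic plane; the symmetries used are built from vectors of $M$ (see \Cref{dyadic:eierlegendewollmichsau,dyadic:mx}), and the case distinctions (a)--(d) of \Cref{dyadic:subnormal-plane} are governed by Jacobowitz's classification applied to $P\perp M$ as a whole (\Cref{dyadic:n<=k,dyadic:rels,dyadic:n-k=j-i}). So ``once the binary building blocks are understood, the inductive splitting propagates the result'' has it backwards: the binary lattices are the easy part (\Cref{dyadic:hyperbolic-plane}), and the propagation is the theorem. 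Finally, your framing of the $\FF_2$ phenomenon attaches it to the wrong statement: the main theorem $U(L)=X(L)$ holds with no residue-field exception; $\FF_2$ is exceptional only for the finer question of whether $S(L)=X(L)$, which is the subject of the appendix (\Cref{symmetriesgenerators}), not of the theorem you were asked to prove.
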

  By \Cref{symmetriesgenerators} in the appendix, symmetries suffice in the ramified dyadic case as well, unless the residue field has two elements.

  If $E/K$ is a non-dyadic field extension, it was shown by Böge~\cite{Boge1966} that the unitary group is generated by symmetries.
  For a ramified dyadic field extension $E/K$ with $2$ a prime element of $K$, Hayakawa~\cite{Hayakawa1968} proved that for $K \neq \QQ_2$ the same statement holds, while for $K = \QQ_2$ again (rescaled) Eichler isometries are necessary.
  In view of these partial results, the remaining cases, that we solve, are:
  \begin{itemize}
   \item the \emph{split} case, that is, $E = K \times K$,
   \item the \emph{dyadic inert} case, that is, $E/K$ is an unramified field extension of a dyadic field,
   \item the \emph{general ramified dyadic} case.
  \end{itemize}

  While the split and unramified cases can be dealt with uniformly with only minor difficulties using the
  techniques from~\cite{Boge1966},
  the general ramified dyadic case requires a different approach.
  A key ingredient is the classification of hermitian lattices due to Jacobowitz~\cite{Jacobowitz1962}, which allows us to reduce the general problem to lattices with controlled Jordan splittings, which can be dealt with case by case. This closes Pandora's box for the unitary group.

  The theorem yields an alternative proof for the computation of the determinant groups of unitary lattices due to Kirschmer \cite{Kirschmer2019}.
  Indeed, let $S(L)$ denote the subgroup of $U(L)$ generated by symmetries of a hermitian lattice $L$.
  Since rescaled Eichler isometries have determinant $1$, as a corollary of our result we obtain $\det(U(L)) = \det(S(L))$ which can be computed directly.
  Determinant groups play an important role in the enumeration of representatives of the isometry classes in genera of hermitian lattices over number fields.
  More precisely, the knowledge of the determinant groups allows the computation of the special genera in the genus of a hermitian lattice. Then strong approximation for the unitary group, as proved by Shimura \cite{shimura1964}, yields that if $L$ is indefinite, its special genus consists of a single isometry class only.

  The paper is organized as follows. After recalling the required background and notation in Section~\ref{sec:prelim}, the basic properties of symmetries and rescaled Eichler isometries are introduced in Section~\ref{sec:generators}.
  The split and unramified cases are dealt with in Section~\ref{sec:split}.
  In Section~\ref{sec:dyadic} the ramified dyadic case is addressed.
  The appendix answers the question of generation
  by symmetries in the ramified dyadic case.

 \section{Hermitian Lattices over local fields}\label{sec:prelim}
Let $K$ be a non-Archimedean local field of characteristic zero with valuation ring $\o$.
Recall that $K$ is \emph{dyadic} if $2 \not\in \o^\times$.
By $E$ we denote an étale $K$-algebra of dimension $2$.
Note that either $E$ is a quadratic field extension of $K$ or $E \cong K \times K$.
The non-trivial $K$-automorphism of $E$ with fixed field $K$ is denoted by $\overline{\phantom{x}} \colon E \to E$.
We denote by
\[ \Tr \colon E \longrightarrow K,\, \alpha \mapsto \alpha + \overline \alpha \quad \text{and} \quad \Nr \colon E \longrightarrow K, \,\alpha \mapsto \alpha \overline \alpha\]
the \emph{trace} and \emph{norm} of the $K$-algebra $E$ respectively
and by $\O$ the integral closure $\{ x \in E \mid \Nr(x) \in \o, \, \Tr(x) \in \o \}$ of $\o$ in $E$.
We write $\fp$ for the maximal ideal of $\o$ and $\FP$ for the largest proper ideal of $\O$ invariant under $\overline{\phantom{x}}$.
Note that if $E$ is a field, then $\O$ is the ring of integers of $E$ and $\FP$ its maximal ideal.
In case $E = K \times K$, we have $\O = \o \times \o$ and $\FP = \fp \O = \fp \times \fp $.
We denote by $\nu_\fp \colon K^\times \to \ZZ$ the normalized discrete valuation of $K$ with $\im(\nu_\fp) = \ZZ$, that is, $\nu_\fp(p)=1$ for a prime element $p$ of $K$.
Similar for $\nu_\FP$ in case $E$ is a field.

Let $V$ be a finitely generated free $E$-module. A \emph{hermitian form} on $V$ is
a map
\[\sprod{\cdot\,}{\cdot} \colon V \times V \longrightarrow E, \]
which is $E$-linear in its first argument and satisfies $\sprod{x}{y}=\overline{\sprod{y}{x}}$ for all $x,y \in V$.
We call the pair $(V,\sprod{\cdot\,}{\cdot})$ an \emph{hermitian space}. We define the shorthand $\sprodq{x}{x} \defeq \sprod{x}{x}$ for $x \in V$.
If the hermitian form is understood from the context, we will drop it from the notation and denote the hermitian space simply by $V$.
Two elements $x, y \in V$ are \emph{orthogonal} if $\sprod x y = 0$.
We call $u \in V$ isotropic if $\sprodq{u}{u} = 0$.
The \emph{unitary group} of a hermitian space $V$ is
\[U(V) = \{f \in \GL_E(V) \mid \sprod{f(x)}{f(y)}=\sprod{x}{y} \text{ for all $x, y \in V$}\}.\]

A \emph{hermitian $\O$-lattice} (or \emph{hermitian lattice}) $L$ is a finitely generated $\O$-submodule of a hermitian space $V$. We say that $L$ is a \emph{lattice} in $V$.
For $a \in K^\times$ we denote by $\prescript{a}{}{V}$ the rescaled hermitian space $(V, a \sprod{\cdot\,}{\cdot})$ and by $\prescript{a}{}{L}$ the hermitian $\O$-lattice $L$ in $\prescript{a}{}{V}$.
The \emph{unitary group} $U(L)$ of $L$ is defined as $U(V) \cap \GL(L)$.
Note that $U(\prescript{a}{}{L})=U(L)$.

If $S$ is a subset of $L$, then $S^\perp$ consists of the elements of $L$ orthogonal to $S$.
We call $L$ \emph{non-degenerate} if $L^\perp=0$ and \emph{degenerate} otherwise.
In what follows all hermitian lattices and spaces are assumed to be
non-degenerate unless stated otherwise. We say that $x \in L$ is
\emph{primitive} in $L$ if $\O x = E x \cap L$.

We denote by $\scale(L)=\sprod{L}{L} \subseteq E$ the \textit{scale} of $L$, which is a fractional $\O$-ideal, and
by $\norm(L)=\{\sprodq{x}{x} \mid x \in L\}\o \subseteq K$ the \emph{norm} of
$L$, which is a fractional $\o$-ideal. An orthogonal splitting of $L$ into sublattices $M$ and $N$ is denoted by $L=M \perp
N$. In this case we identify $U(M)$ with the subgroup $U(M) \times \{\id_N\}$
of $U(L)$. We use $L \cong M$ to state that two lattices $L$ and $M$ are
isometric.
If $G$ is a hermitian matrix over $E$, then we write $L = G$ to indicate that $G$ is the Gram matrix with respect to some basis of $L$.
In this case we define the \emph{determinant} of $L$ via $\det(L) = \det(G) \Nr(\O^\times)$, an element of $K^\times/\Nr(\O^\times)$.

Let $L$ be a full $\O$-lattice in $V$ and $\FA$ be a fractional ideal of $\O$.
Set
\[L^\FA = \{ x \in L \mid \sprod{x}{L}\subseteq \FA\}. \]
The lattice $L$ is called \textit{$\FA$-modular} if $L = L^\FA$. An $\O$-modular lattice is called \emph{unimodular}.
A decomposition $L = \bigperp_{i=1}^{t} L_i$ of $L$ is called a \textit{Jordan splitting} 
if the sublattices $L_i$ are $\FP^{s_i}$-modular such that $s_1 < s_2 < \dotsb < s_t$. Note that every hermitian lattice admits a Jordan splitting by \cite{Jacobowitz1962}.

\section{Symmetries and rescaled Eichler isometries}\label{sec:generators}
In this section we introduce the generators, the symmetries and rescaled
Eichler isometries, and derive first properties.
Let $E, K, \O$ and $\o$ be as in \Cref{sec:prelim}.

\begin{definition}
  Let $V$ be a hermitian space, $s \in V$ and $\sigma \in E^\times$ with $\sprodq{s}{s}=\Tr(\sigma)$. We call the linear map
  \[S_{s,\sigma} \colon V \longrightarrow V, \ x \longmapsto x -\sprod x s \sigma^{-1}s\]
  a \emph{symmetry} of $V$. It preserves the hermitian form $\sprod{\cdot\,}{\cdot}$.
  If $s$ is isotropic, then we have $\det(S_{s, \sigma}) = 1$ and
  otherwise $\det(S_{s,\sigma})=-\overline\sigma/\sigma$.
  The inverse is given by $S_{s,\sigma}^{-1}=S_{s,\bar \sigma}$.
 \end{definition}
Note that the symmetries are precisely those elements of the unitary group which fix a hyperplane.

\begin{lemma}\label{lem:symmetry}
  Let $L$ be a lattice in the hermitian space $V$.
  Let $S_{s, \sigma}$ be a symmetry of $V$ with $s \in L$ . Then $S_{s, \sigma}
  \in U(L)$ if $\sprod L s \subseteq \sigma \O$.
\end{lemma}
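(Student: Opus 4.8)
The plan is to deduce the claim from two elementary module inclusions. Since the definition already records that $S_{s,\sigma}$ preserves the hermitian form, we have $S_{s,\sigma}\in U(V)$, and it remains only to show $S_{s,\sigma}(L)=L$, for then $S_{s,\sigma}\in U(V)\cap\GL(L)=U(L)$. Using that $S_{s,\sigma}$ is invertible on $V$ with inverse $S_{s,\overline\sigma}$, I would reduce this equality to the two inclusions $S_{s,\sigma}(L)\subseteq L$ and $S_{s,\overline\sigma}(L)\subseteq L$: granting both, the identity $L=S_{s,\sigma}\bigl(S_{s,\overline\sigma}(L)\bigr)\subseteq S_{s,\sigma}(L)\subseteq L$ forces $S_{s,\sigma}(L)=L$.

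Both inclusions rest on the same computation. For $x\in L$ one has $\sprod{x}{s}\in\sprod{L}{s}\subseteq\sigma\O$, so $\sprod{x}{s}\sigma^{-1}\in\O$ by commutativity of $E$, and hence $S_{s,\sigma}(x)=x-\sprod{x}{s}\sigma^{-1}s\in L$ because $s\in L$. This settles the first inclusion directly from the hypothesis. The second inclusion is the identical computation with $\sigma$ replaced by $\overline\sigma$, and it goes through as soon as one knows $\sprod{L}{s}\subseteq\overline\sigma\,\O$.

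The step I expect to carry the real content is therefore establishing $\sigma\O=\overline\sigma\,\O$, which is exactly what links the hypothesis to the behaviour of the inverse. Here the relation $\sprodq{s}{s}=\Tr(\sigma)$ is used: since $s\in L$, we get $\Tr(\sigma)=\sprodq{s}{s}\in\sprod{L}{s}\subseteq\sigma\O$, and as $\sigma\in\sigma\O$ this gives $\overline\sigma=\Tr(\sigma)-\sigma\in\sigma\O$, that is, $\overline\sigma\,\O\subseteq\sigma\O$. Applying the conjugation $\overline{\phantom{x}}$, which maps ideals to ideals and fixes $\O$, yields the reverse inclusion $\sigma\O\subseteq\overline\sigma\,\O$, so that $\sigma\O=\overline\sigma\,\O$. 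In particular $\sprod{L}{s}\subseteq\sigma\O=\overline\sigma\,\O$, which supplies the missing hypothesis for the second inclusion and completes the proof. The remaining verifications are routine module membership checks, so this identification of ideals is the only non-formal point.
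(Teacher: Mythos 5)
Your proof is correct and follows essentially the same route as the paper: both verify $S_{s,\sigma}(L)\subseteq L$ directly from the hypothesis, reduce the reverse inclusion to $S_{s,\sigma}^{-1}=S_{s,\overline\sigma}$ preserving $L$, and establish the key identity $\sigma\O=\overline\sigma\O$ from $\Tr(\sigma)=\sprodq{s}{s}\in\sprod{L}{s}\subseteq\sigma\O$ followed by conjugation. Your write-up merely spells out the module-membership checks that the paper leaves implicit.
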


\begin{proof}
 We have to show that $S_{s,\sigma}(L) = L$. The inclusion $S_{s,\sigma}(L) \subseteq L$ follows from
 the definition and our assumptions. The other inclusion follows if we can show that $S_{s,\sigma}^{-1}(L) \subseteq L$. Since $S_{s,\sigma}^{-1} = S_{s,\bar \sigma}$, it suffices to show $\sigma \O = \bar \sigma \O$. By assumption $\Tr(\sigma) = \sprodq{s}{s} \in \sprod{L}{s} \subseteq \sigma \O$ and thus $\bar \sigma/{\sigma} \in \O$. Conjugation yields $\sigma/\bar \sigma \in \O$.
\end{proof}

\begin{lemma}\label{sym:act}
  Let $L$ be a lattice in the hermitian space $V$ and $x, x' \in L$ with $\sprodq x x = \sprodq{x'} {x'}$ and $\sprod{x}{x - x'} \neq 0$.
  Let $s = x - x'$ and $\sigma = \sprod{x}{ x - x'}$.
  Then $S_{s, \sigma} \in U(V)$ satisfies $S_{s,\sigma}(x) = x'$ and we have $S_{s, \sigma} \in U(L)$ if
  \[\sprod{L}{s}\sprod{x}{s}^{-1}s \subseteq L.\]
\end{lemma}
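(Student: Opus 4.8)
The plan is to establish the three claims separately: that $S_{s,\sigma}$ is a well-defined symmetry of $V$, that it carries $x$ to $x'$, and that the displayed inclusion upgrades $S_{s,\sigma}(L)\subseteq L$ to an equality. First I would check well-definedness, namely $\sprod{s}{s}=\Tr(\sigma)$, which is what the definition of a symmetry requires (here $\sigma=\sprod{x}{s}\in E^\times$). Expanding $\sprod{s}{s}=\sprod{x-x'}{x-x'}$ and using $\sprod{x}{x}=\sprod{x'}{x'}$ together with $\sprod{x}{x}\in K$ gives $\sprod{s}{s}=2\sprod{x}{x}-\Tr(\sprod{x}{x'})$, while $\sigma=\sprod{x}{x}-\sprod{x}{x'}$ yields $\Tr(\sigma)=2\sprod{x}{x}-\Tr(\sprod{x}{x'})$; the two coincide, so $S_{s,\sigma}\in U(V)$. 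The identity $S_{s,\sigma}(x)=x'$ is then immediate from $\sprod{x}{s}=\sigma$, since $S_{s,\sigma}(x)=x-\sprod{x}{s}\sigma^{-1}s=x-s=x'$.

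For the membership $S_{s,\sigma}\in U(L)$ I must show $S_{s,\sigma}(L)=L$. One inclusion is built into the hypothesis: for $y\in L$ we have $S_{s,\sigma}(y)=y-\sprod{y}{s}\sigma^{-1}s$, and $\sprod{y}{s}\sigma^{-1}s\in\sprod{L}{s}\sprod{x}{s}^{-1}s\subseteq L$, so $S_{s,\sigma}(L)\subseteq L$. I expect the main obstacle to be the reverse inclusion $L\subseteq S_{s,\sigma}(L)$: an injective $\O$-endomorphism of a lattice need not be surjective, so equality does not follow formally and I must instead control the inverse $S_{s,\sigma}^{-1}=S_{s,\bar\sigma}$ directly, that is, prove $\sprod{L}{s}\bar\sigma^{-1}s\subseteq L$.

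To transfer the hypothesis from $\sigma$ to $\bar\sigma$, the crux is the relation $\sigma/\bar\sigma\in\O$, in analogy with the equality $\sigma\O=\bar\sigma\O$ used in the proof of \Cref{lem:symmetry}. I would extract it by pairing the hypothesis with $s$. Writing $\FA=\sprod{L}{s}$, a nonzero fractional $\O$-ideal (it contains $\sigma$) and hence invertible, the inclusion $\FA\sigma^{-1}s\subseteq L$ gives $\sprod{\FA\sigma^{-1}s}{s}\subseteq\sprod{L}{s}=\FA$, that is $\FA\sigma^{-1}\sprod{s}{s}\subseteq\FA$. Since $\sprod{s}{s}=\Tr(\sigma)=\sigma+\bar\sigma$, this reads $\FA(1+\bar\sigma/\sigma)\subseteq\FA$, whence $1+\bar\sigma/\sigma\in\O$ by invertibility of $\FA$, so $\bar\sigma/\sigma\in\O$; applying conjugation (which preserves $\O$) gives $\sigma/\bar\sigma\in\O$. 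Now for $y\in L$,
\[
\sprod{y}{s}\bar\sigma^{-1}s=(\sigma/\bar\sigma)\,\sprod{y}{s}\sigma^{-1}s\in\O\cdot L=L,
\]
so $S_{s,\bar\sigma}(L)\subseteq L$, i.e. $L\subseteq S_{s,\sigma}(L)$, and combining the two inclusions yields $S_{s,\sigma}\in U(L)$.

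As an alternative to the last paragraph one could argue structurally: $S_{s,\sigma}\in U(V)$ maps $L$ isometrically onto the full sublattice $S_{s,\sigma}(L)\subseteq L$, which therefore has the same determinant as $L$ and so must equal $L$. I would nonetheless favour the explicit computation, since it stays within the elementary framework already used for \Cref{lem:symmetry} and avoids invoking covolume comparisons.
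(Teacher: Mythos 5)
Your proof is correct. Note that the paper states \Cref{sym:act} without any proof, so the relevant benchmark is the proof of \Cref{lem:symmetry}, whose template you follow: verify $\Tr(\sigma)=\sprodq{s}{s}$, get $S_{s,\sigma}(L)\subseteq L$ directly from the hypothesis, and obtain the reverse inclusion by controlling $S_{s,\sigma}^{-1}=S_{s,\bar\sigma}$, which reduces to $\sigma/\bar\sigma\in\O$. You correctly identify that this last point is the only real content (an injective $\O$-endomorphism of a lattice need not be surjective), and that the argument of \Cref{lem:symmetry} --- which deduces $\bar\sigma/\sigma\in\O$ from $\Tr(\sigma)=\sprodq{s}{s}\in\sprod{L}{s}\subseteq\sigma\O$ --- is not available here, since the hypothesis $\sprod{L}{s}\sigma^{-1}s\subseteq L$ is strictly weaker than $\sprod{L}{s}\subseteq\sigma\O$. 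Your replacement, pairing the hypothesis against $s$ to get $\FA\bigl(1+\bar\sigma/\sigma\bigr)\subseteq\FA$ with $\FA=\sprod{L}{s}$ and then cancelling the invertible ideal $\FA$, is sound, and it degenerates gracefully when $s$ is isotropic (then $\bar\sigma/\sigma=-1$ and the inclusion is vacuous but the conclusion still holds). One small refinement: in the split case $E=K\times K$, a nonzero fractional $\O$-ideal need not be invertible (e.g.\ $\o\times 0$); what actually makes $\FA$ invertible is that it contains the unit $\sigma$, so both components of $\FA$ are nonzero --- you state this containment but attribute invertibility to mere nonvanishing, so the justification should be rephrased accordingly. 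Your alternative structural argument (a full sublattice isometric to $L$ has the same determinant, hence equals $L$) is also valid, but the explicit computation is preferable for the reason you give: it stays within the elementary framework of \Cref{lem:symmetry} and works uniformly in the split, inert and ramified cases.
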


Recall that for $i \in \ZZ$, the \emph{hyperbolic plane} $H(i)$ of scale $\FP^i$ is by definition the hermitian $\O$-lattice with Gram matrix
\[ \begin{pmatrix} 0 & \pi^i \\ \bar \pi^i & 0 \end{pmatrix}, \]
where $\pi$ is a generator of $\FP$.
\begin{definition}
 Let $L$ be a hermitian lattice and $u,v \in L$ linearly independent. Set $P = \O u \oplus \O v$. We call $(u,v)$ a \emph{hyperbolic pair splitting $L$} if $u$ and $v$ are isotropic and $P \perp P^\perp=L$. We say that the hyperbolic plane $P \cong H(i)$ splits $L$.
\end{definition}

\begin{definition}
Let $L$ be a hermitian lattice, $(u,v)$ a hyperbolic pair splitting $L$ and $P = \O u \oplus \O v$.
Let $y \in P^\perp$ with $\sprod{L}{y} \subseteq \sprod{u}{v}\O$ and $\mu \in \O$ with
$\Tr(\mu \sprod u v )=-\sprodq y y$.
Then the \emph{rescaled Eichler isometry} $E_{y}^{\mu}\in U(L)$ is defined by
\[E_{y}^{\mu} \colon L \longrightarrow L, \, x \longmapsto x + \frac{\sprod x u}{\sprod v u}y + \left(\frac{\mu \sprod x u}{\sprod v u}-\frac{\sprod x y}{\sprod u v}\right)u.\] 

Let $S(L)$ denote the subgroup of $U(L)$ generated by symmetries and
$S(L) \subseteq X(L) \subseteq U(L)$ the subgroup generated by symmetries and rescaled Eichler isometries.
\end{definition}

\section{The unramified case}\label{sec:split}

Let $E, K, \O$ and $\o$ be as in \Cref{sec:prelim}.
In this section we consider the case where $E/K$ is unramified, that is,
$E \cong K \times K$ (the \emph{split} case) or $E/K$ is an unramified quadratic
extension of fields (the \emph{inert} case).
We prove that under this additional assumption, the unitary group of a hermitian lattice
is generated by symmetries.

Our strategy is as follows. By the results of Jacobowitz \cite{Jacobowitz1962} in the inert and split case,
a hermitian $\O$-lattice $L$ of scale $\FP^i$ is either of the form
$L = H(i) \perp N'$ or it is of the form $\O a \perp N$ with $\scale(N) \subseteq \FP^{i+1}$. We will consider both cases separately and show that $U(L) = S(L) U(N')$, respectively $U(L) = S(L) U(N)$.
Then the proof proceeds by induction on the rank of $L$.

\subsection{The inert case}
We begin with the inert case, that is, we assume that $E/K$ is an unramified quadratic extension.
Let $L$ be a hermitian lattice.
If $K$ is not dyadic, it was shown by B\"oge \cite[Satz 26]{Boge1966} that $U(L) = S(L)$. We now adapt the proof to the dyadic case.

For the rest of this subsection, we let $p \in \o$ be a prime element of $\o$, that is, $p\o = \fp$.
As $E/K$ is unramified, $p$ is also a prime element of $\O$.
Moreover the trace map $\Tr \colon \O \to \o$ is surjective (\cite[Sec. 5, Thm. 2]{Frohlich1967}) and there exists
$\rho \in \O$ such that $\Tr(\rho) = 1$. In particular $\rho \in \O^\times$.
Further there is an element $\omega \in \O^\times$ with $\Tr(\omega) = 0$. To see this note that $\Tr \colon E \rightarrow K$ has non-trivial kernel because $\Tr$ is $K$-linear and $\dim_K E=2>1$. If $\omega' \neq 0$ is in the kernel of $\Tr$, then $\omega = p^i \omega' \in \O^\times$ is a unit for $i = -\nu_\fp(\omega')$ and $\Tr(\omega) = p^i \Tr(\omega') = 0$.

\begin{lemma}\label{lem:inert-1}
  Let $u,u' \in L$ with $\sprod u L = \sprod{u'} L = \scale(L)$.
 Then there exists $x \in L$ with
 \[ \sprod x u \O = \sprod x {u'} \O = \scale(L).\]
 If in addition $\sprodq u u = \sprodq{u'}{u'} = 0$ and $\sprod u {u'}\O \neq \scale(L)$, then we may require that $\sprodq x x = 0$.
\end{lemma}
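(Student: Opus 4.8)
The plan is to reduce to the normalized case $\scale(L)=\O$ and then exploit the two facts singled out above for the inert case: the surjectivity of $\Tr\colon\O\to\o$ and the existence of a trace-zero unit. In the inert case every conjugation-invariant fractional ideal is a power of $\FP=p\O$, so $\scale(L)=\FP^i$ for some $i\in\ZZ$. Rescaling the form by $p^{-i}\in K^\times$ leaves $U(L)$, isotropy, and the relation $\sprod u L=\sprod{u'}L=\scale(L)$ intact while turning $\scale(L)$ into $\O$; it also converts each displayed ideal condition into the corresponding \emph{unit} condition. Hence I may assume $\scale(L)=\O$, so that $\sprod u L=\sprod{u'}L=\O$ and the goal of the first assertion is simply to find $x\in L$ with $\sprod x u,\sprod x{u'}\in\O^\times$.

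For the first assertion I would pass to the residue field. Reduction modulo $\FP$ makes $\bar L=L/\FP L$ an $\O/\FP$-vector space, and the $E$-linear forms $\sprod{\cdot}{u},\sprod{\cdot}{u'}$ induce $\O/\FP$-linear maps $\phi_u,\phi_{u'}\colon\bar L\to\O/\FP$. Because $\sprod u L=\sprod{u'}L=\O$, both maps are nonzero, so $\ker\phi_u$ and $\ker\phi_{u'}$ are proper subspaces of $\bar L$. Since a vector space over any field is never the union of two proper subspaces, there is a class in $\bar L$ lying outside both kernels; any lift $x\in L$ then satisfies $\sprod x u,\sprod x{u'}\in\O^\times$, which is the claim.

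For the second assertion I would start from such an $x$, now renamed $x_0$, and correct it along the isotropic line $\O u$. Set $x=x_0+tu$ with $t\in\O$ to be chosen. Since $u$ is isotropic, $\sprod x u=\sprod{x_0}{u}$ remains a unit; and since $\sprod u{u'}\O\neq\scale(L)=\O$ forces $\sprod u{u'}\in\FP$, the value $\sprod x{u'}=\sprod{x_0}{u'}+t\sprod u{u'}$ is a unit plus an element of $\FP$, hence a unit, for \emph{every} $t$. Finally, isotropy of $u$ gives
\[ \sprodq{x}{x}=\sprodq{x_0}{x_0}+\Tr\bigl(t\,\overline{\sprod{x_0}{u}}\bigr). \]
As $\sprod{x_0}{u}\in\O^\times$ and $\Tr\colon\O\to\o$ is surjective, the map $t\mapsto\Tr\bigl(t\,\overline{\sprod{x_0}{u}}\bigr)$ surjects onto $\o$; since $\sprodq{x_0}{x_0}$ lies in $K\cap\O=\o$, I can pick $t$ so that $\sprodq{x}{x}=0$, and this $x$ meets all requirements.

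There is no serious obstacle once the reduction to $\scale(L)=\O$ is in place, as the final computation is elementary; the two points to watch are conceptual. First, the hypothesis $\sprod u{u'}\O\neq\scale(L)$ is exactly what keeps $\sprod x{u'}$ a unit after the correction, decoupling the unit condition at $u'$ from the freedom in $t$. Second, it is the surjectivity of the trace, special to the unramified setting, that lets me annihilate $\sprodq{x}{x}$ while moving along the single direction $u$; note that only the isotropy of $u$ (not of $u'$) is actually used.
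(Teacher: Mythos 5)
Your proof is correct and takes essentially the same route as the paper: the paper's first step (choosing $x$ among $y$, $y'$, $y+y'$) is exactly your two-hyperplane avoidance argument made explicit, and its second step corrects $x$ by the multiple $\sprodq{x}{x}\rho\sprod{u}{x}^{-1}u$ of the isotropic vector $u$, which is precisely your choice of $t$ written out via the trace-one unit $\rho$ rather than invoked through surjectivity of $\Tr\colon\O\to\o$. No gaps.
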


\begin{proof}
   By assumption there exist $y, y' \in L$ such that $\sprod u  y\O = \sprod{u'}{y'}\O = \scale(L)$. Then we can take $x$ as one of $y$, $y'$ or $y+y'$.
  Now assume that $u$ and $u'$ are isotropic.
  Consider $x' = x - \sprodq{ x}{ x}\rho \sprod u { x}^{-1}u$. As $\Tr(\rho) = 1$ this implies $\sprodq{x' }{ x'} = 0$.
  By construction $\sprod{u }{x'}\O = \sprod{u}{x}\O = \scale(L)$.
  Since $\sprod u {u'} \in \FP \scale(L)$ we also have $\sprod{u'}{x'}\O = \scale(L)$.
\end{proof}

\begin{lemma}\label{lem:refl-line}
  Let $L = \O a \perp N$ with $\sprodq a  a = 1$ and $\scale(N) \subseteq \FP$.
  Let $a' \in L$ be an element with $\sprodq{a'}{a'} = 1$. Then there exists a symmetry of $L$ mapping $a$ to $a'$.
\end{lemma}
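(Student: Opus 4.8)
The plan is to produce the symmetry explicitly as $S_{a-a',\sigma}$ and to reduce the assertion to a single integrality condition. Write $a' = \alpha a + n$ with $\alpha \in \O$ and $n \in N$. Since $a$ is orthogonal to $N$ and $\scale(N) \subseteq \FP$, the scalar $\sprod{n}{n}$ lies in $K \cap \scale(N) \subseteq \fp$, so $1 = \sprod{a'}{a'} = \Nr(\alpha) + \sprod{n}{n}$ forces $\Nr(\alpha) \in 1 + \fp \subseteq \o^\times$ and hence $\alpha \in \O^\times$. Setting $s = a - a' = (1-\alpha)a - n$ and $\sigma = \sprod{a}{a - a'}$, a short expansion using $a \perp N$ and conjugate-linearity in the second argument gives $\sigma = 1 - \bar\alpha$. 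By \Cref{sym:act} applied with $x = a$ and $x' = a'$, as long as $\sigma \neq 0$ the symmetry $S_{s,\sigma}$ sends $a$ to $a'$, and it lies in $U(L)$ provided $\sprod{L}{s}\,\sprod{a}{s}^{-1}s \subseteq L$.

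The second step is to unwind this condition. Again using $a \perp N$ one finds $\sprod{a}{s} = \sigma$ and $\sprod{L}{s} = \sigma\O + \sprod{N}{n}$; since $s$ has $N$-component $-n$ and $a$-component $(1-\alpha) \in \O$, the inclusion $\sprod{L}{s}\,\sigma^{-1}s \subseteq L$ collapses to the single requirement
\[ \sprod{N}{n} \subseteq \sigma\O = (1-\bar\alpha)\O. \]
If $\alpha \not\equiv 1 \pmod{\FP}$, then $1 - \bar\alpha \in \O^\times$, the displayed inclusion is automatic, and $S_{s,\sigma}$ is the desired symmetry. This handles the bulk of the configurations, and in particular the case $n = 0$, where $\sprod{N}{n} = 0$.

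The remaining case $\alpha \equiv 1 \pmod{\FP}$, in which $1 - \bar\alpha$ is a non-unit, is the crux: for the naive vector $s = a - a'$ the inclusion $\sprod{N}{n} \subseteq (1-\bar\alpha)\O$ can genuinely fail once the $\FP$-valuation of $1-\bar\alpha$ exceeds the scale of $\sprod{N}{n}$, which is exactly what occurs in the dyadic inert regime. Here my plan is to bring in the auxiliary elements fixed before the lemma — the trace-one unit $\rho$ and the trace-zero unit $\omega$ — together with \Cref{lem:inert-1}, in order to control the relation between $\nu_\FP(1-\bar\alpha)$ and $\sprod{N}{n}$ and so recover the integrality condition. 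I expect verifying this condition in the dyadic inert situation to be the main obstacle of the proof, and the step where surjectivity of the trace (equivalently, the existence of $\rho$ and $\omega$) is decisive.
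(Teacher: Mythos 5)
Your handling of the easy case is correct and coincides with the paper's first step: when $\sprod{a}{a-a'}\O=\O$, applying \Cref{sym:act} to $s=a-a'$ and $\sigma=\sprod{a}{a-a'}$ produces the symmetry at once. The problem is the remaining case $\alpha\equiv 1 \pmod{\FP}$, i.e.\ $\sprod{a}{a-a'}\in\FP$: this is where the entire content of the lemma lies, and for it you offer only a declaration of intent (``bring in $\rho$, $\omega$ and \Cref{lem:inert-1} \dots to recover the integrality condition''), not an argument. Moreover, the plan as stated cannot succeed. As you yourself concede, the inclusion $\sprod{N}{n}\subseteq(1-\bar\alpha)\O$ can genuinely fail --- for instance with $\alpha=1+\omega p^{2}$ and $n$ primitive with $\sprod{N}{n}=\FP$ and $\sprodq{n}{n}=-\Nr(\omega)p^{4}$, which is consistent with $\sprodq{a'}{a'}=1$ --- and when it fails, the map $S_{a-a',\,1-\bar\alpha}$ simply does not preserve $L$. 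No refined comparison of $\nu_\FP(1-\bar\alpha)$ with $\sprod{N}{n}$ can rescue a symmetry whose defining vector is forced to be $a-a'$; what is needed is a different isometry, not a better estimate.

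The paper's resolution of the hard case is to precondition $a'$ rather than to repair $s=a-a'$. By \Cref{lem:inert-1} one picks $s\in L$ with $\sprod{a}{s}\O=\sprod{a'}{s}\O=\O$. Writing $s=\alpha_{0}a+n_{0}$, the equality $\sprod{a}{s}\O=\O$ forces $\alpha_{0}\in\O^\times$, and then $\scale(N)\subseteq\FP$ gives $\sprodq{s}{s}=\Nr(\alpha_{0})+\sprodq{n_{0}}{n_{0}}\in\o^\times$. Using the trace-one unit $\rho$, the element $\sigma=\rho\sprodq{s}{s}$ is a unit with $\Tr(\sigma)=\sprodq{s}{s}$, so $S=S_{s,\sigma}\in U(L)$ by \Cref{lem:symmetry}. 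Then
\[
\sprod{a}{a-S(a')}=\sprod{a}{a-a'}+\sprod{a}{s}\sprod{s}{a'}\bar\sigma^{-1}
\]
is a unit, being the sum of an element of $\FP$ and a unit, so the easy case applies to the pair $(a,S(a'))$; composing yields a product of two symmetries carrying $a$ to $a'$, which is also all that is used downstream (\Cref{prop:line-symmetries}). This auxiliary-symmetry step --- together with the observation that the shape $L=\O a\perp N$ makes $\sprodq{s}{s}$ a unit, so that $\rho$ can be used to build an admissible $\sigma$ --- is precisely the idea missing from your proposal.
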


\begin{proof}
  If $\sprod a {a - a'}\O = \sprod a  L = \O$,
  then from \Cref{sym:act} we have $S_{s,\sigma} \in U(L)$ and $S_{s, \sigma}(a) = a'$.

  Now assume that $\sprod a {a - a'}\O \subseteq \FP$.
  By \Cref{lem:inert-1} there exists $s \in L$ such that $\sprod a s \O = \sprod{a'} s\O = \scale(L) = \O$.
  In particular $\sprod s L  = \scale(L)$, so $s = \alpha a + n$ with $\alpha \in \O^\times$, $n \in N$; hence $\sprodq s s\O = \O$.
  Consider $S = S_{s,\sigma}$ with $\sigma = \rho \sprodq s s$. Then $\Tr(\sigma) = \sprodq s s$ and
  as $\sigma \in \O^\times$ and $\sprod L s = \O$, from \Cref{lem:symmetry} we know that $S \in U(L)$.
  Now
  \[ \sprod a {a - S(a')} = \sprod a {a - a'} + \sprod a {\sprod{a'} s \sigma^{-1}s} = \sprod a {a - a'} + \sprod a s \sprod s {a'}\bar \sigma^{-1}. \]
  Since $\sprod a {a - a'} \in \FP$, this implies $\sprod a {a - S(a')}\O = \sprod a L = \O$.
  Thus from the first part it follows that there exists a symmetry mapping $a$ to $S(a')$.
\end{proof}

\begin{lemma}\label{lem:rotation-reflection-inert}
 Every rescaled Eichler isometry of $L$ with respect to a hyperbolic plane $P$ is a product of two symmetries of $L$ and an element of $U(L)$ that fixes $P$ pointwise.
\end{lemma}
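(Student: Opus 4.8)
The plan is to compute the action of $E := E_y^\mu$ explicitly on the splitting $L = P \perp P^\perp$ and then to match it on the plane $P = \O u \oplus \O v$ by a product of two concretely chosen symmetries. Using $\sprodq{u}{u} = \sprodq{v}{v} = 0$ and $\sprod{u}{y} = \sprod{v}{y} = 0$ for $y \in P^\perp$, the defining formula gives $E(u) = u$, $E(v) = v + y + \mu u$, and $E(w) = w - \sprod{w}{y}\sprod{u}{v}^{-1}u$ for $w \in P^\perp$. The key reduction is the following: if I can produce symmetries $S_1, S_2 \in U(L)$ such that $S_2 S_1$ fixes $u$ and sends $v$ to $v + y + \mu u$, then $R := (S_2 S_1)^{-1} E$ lies in $U(L)$, fixes $u$ (all factors do) and fixes $v$ (since $S_2 S_1(v) = E(v)$), hence fixes $P$ pointwise; this yields the desired factorization $E = S_2 S_1 R$. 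Thus everything reduces to realizing the map $u \mapsto u$, $v \mapsto v + y + \mu u$ on $P$ by two symmetries of $L$, and the $P^\perp$-action of $E$ is automatically absorbed into $R$.

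For the construction I would first choose $\alpha \in \O^\times$ with $\Tr(\alpha \sprod{u}{v}) = -\sprodq{y}{y}$ and $\alpha \neq \mu$. Such an $\alpha$ exists: the solutions $\beta \in \O$ of $\Tr(\beta \sprod{u}{v}) = -\sprodq{y}{y}$ form the coset $\mu + \o\omega$, where $\omega \in \O^\times$ is the fixed unit of trace zero; since $\overline\omega \neq 0$, the reduction of this coset modulo $\FP$ is an affine line over the residue field $\o/\fp$, so at most one residue class of $\beta$ is a non-unit, and as the coset is infinite we may pick $\alpha$ a unit with $\alpha \neq \mu$. I then set
\[S_1 = S_{y + \alpha u,\,\sigma_1}, \quad \sigma_1 = -\overline\alpha\,\sprod{v}{u}, \qquad S_2 = S_{u,\,\tau}, \quad \tau = \sprod{v}{u}/(\alpha - \mu).\]
Here $S_1$ is a valid symmetry, as $\sprodq{y+\alpha u}{y+\alpha u} = \sprodq{y}{y} = \Tr(\sigma_1)$, and a direct computation gives $S_1(u) = u$ and $S_1(v) = v + y + \alpha u$; while $S_2$ is a well-defined isotropic symmetry precisely because $\Tr(\tau) = \Tr((\alpha - \mu)\sprod{u}{v})/\Nr(\alpha - \mu) = 0$ by the choice of $\alpha$, with $S_2(u) = u$, $S_2(y) = y$ and $S_2(v) = v - (\alpha - \mu)u$. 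Composing, $S_2 S_1$ fixes $u$ and sends $v$ to $v + y + \mu u$, exactly matching $E$ on $P$.

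The main point to verify is integrality, that is, $S_1, S_2 \in U(L)$, and this is precisely where the unit choice of $\alpha$ enters. Using $\sprod{L}{u} = \sprod{u}{v}\O$ and the hypothesis $\sprod{L}{y} \subseteq \sprod{u}{v}\O$ one obtains $\sprod{L}{y + \alpha u} \subseteq \sprod{u}{v}\O = \sigma_1 \O$, the last equality because $\alpha$ is a unit and $\FP$ is conjugation-invariant; hence $S_1 \in U(L)$ by \Cref{lem:symmetry}. For $S_2$ one has $\tau\O = \sprod{u}{v}(\alpha - \mu)^{-1}\O \supseteq \sprod{u}{v}\O = \sprod{L}{u}$ since $\alpha - \mu \in \O$, so again $S_2 \in U(L)$ by \Cref{lem:symmetry}. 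I expect this integrality bookkeeping to be the only genuine obstacle: a naive attempt to realize $v \mapsto v + y + \mu u$ by the single symmetry with root $y + \mu u$ forces the scale $-\overline\mu\,\sprod{v}{u}$, whose integrality would require $\mu$ itself to be a unit; splitting off the $u$-component by the determinant-one isotropic symmetry $S_2$ removes this constraint. The degenerate case $\mu = 0$ (equivalently $y$ isotropic) is covered by the same argument, with the choice $\alpha = \omega$.
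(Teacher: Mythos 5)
Your proof is correct and is in substance the argument that the paper compresses into its one-line citation of B\"oge's Hilfssatz~7 ``using the skew element $\omega$'': your $S_1$ is exactly the symmetry of \Cref{sym:act} applied to $v$ and $v+y+\alpha u$ (note $S_{s,\sigma}=S_{-s,\sigma}$), and your $S_2$ is a symmetry with isotropic root $u$ and trace-zero scale, which is precisely the device used to shift the Eichler parameter into the units in the paper's split-case analogue, \Cref{lem:rotation-reflection}. The only inaccuracy is the claim that the solutions of $\Tr(\beta\sprod{u}{v})=-\sprodq{y}{y}$ form the coset $\mu+\o\omega$: the kernel of $\beta\mapsto\Tr(\beta\sprod{u}{v})$ on $\O$ is $\o\omega'$ with $\omega'=p^{i}\sprod{u}{v}^{-1}\omega\in\O^{\times}$ (where $\FP^{i}=\sprod{u}{v}\O$), and this equals $\o\omega$ only when $\sprod{u}{v}\in K$. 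The slip is harmless: $\omega'$ is still a unit, so the reduction of $\mu+\o\omega'$ modulo $\FP$ is again an affine line over $\o/\fp$ and your argument producing a unit $\alpha\neq\mu$ in the coset (with $\alpha=\omega'$ in the case $\mu=0$) goes through verbatim; alternatively, normalize $\sprod{u}{v}=p^{i}$ at the outset by replacing $v$ with a suitable unit multiple, which changes neither $P$ nor the set of rescaled Eichler isometries with respect to $P$, after which your formulas are literally correct.
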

\begin{proof}
 Use the skew element $\omega$ and \cite[Hilfssatz 7]{Boge1966}.
\end{proof}

\subsection{The split case}
We now consider the case where $E = K \times K$.
Note that in this case
the involution is given by $\overline{\phantom{x}} \colon E \rightarrow E,\, (a,b) \mapsto (b,a)$
and we have $\O = \o \times \o$ as well as
$\FP = \fp \times \fp$.

\begin{lemma}\label{lem:isotropic-y}
 Let $u,u' \in L$ with $\sprod u L = \sprod{u'} L = \scale(L)$.
 Then there exists $x \in L$ with
 \[\sprod x u \O = \sprod x {u'}\O = \scale(L).\]
 If in addition $\sprodq u u = \sprodq{u'}{u'} = 0$ and $\sprod u {u'}\O \neq
 \scale(L)$, then we may require that $\sprodq x x = 0$.
\end{lemma}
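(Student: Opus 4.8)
The plan is to exploit the product structure of the split case to turn the hermitian lattice into a $K$-vector space paired with its dual, and then to follow the pattern of \Cref{lem:inert-1}, taking extra care at the one place where $\O$ failing to be a discrete valuation ring forces a change.

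Concretely, I would fix the idempotents $e=(1,0)$ and $\bar e=(0,1)$ of $E=K\times K$, so that $1=e+\bar e$ and $\overline{e}=\bar e$, and decompose $V=eV\oplus\bar e V$ and $L=eL\oplus\bar e L=:L_1\oplus L_2$ with $L_1,L_2$ lattices over $\o$. Because $\overline{e}=\bar e$, the form kills $eV$ and $\bar e V$ separately and is encoded by a single non-degenerate $K$-bilinear pairing $\beta\colon eV\times\bar e V\to K$ via $\sprod{x}{y}=\bigl(\beta(ex,\bar e y),\,\beta(ey,\bar e x)\bigr)$. In these terms $\scale(L)=\mathfrak a\,\O$ with $\mathfrak a=\beta(L_1,L_2)$, an element $u$ is isotropic iff $\beta(eu,\bar e u)=0$, and the hypothesis $\sprod{u}{L}=\scale(L)$ becomes the pair of surjectivity statements $\beta(eu,L_2)=\mathfrak a=\beta(L_1,\bar e u)$ (and likewise for $u'$). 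The conclusion $\sprod{x}{u}\O=\scale(L)$ unwinds to the two \emph{decoupled} demands that $\beta(ex,\bar e u)$ and $\beta(eu,\bar e x)$ each generate $\mathfrak a$; hence $ex\in L_1$ and $\bar e x\in L_2$ may be chosen independently.

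For the first assertion I would therefore pick $ex$ and $\bar e x$ separately. To find $ex$: the functionals $\beta(\,\cdot\,,\bar e u),\beta(\,\cdot\,,\bar e u')\colon L_1\to\mathfrak a$ are surjective, so modulo $\fp\mathfrak a$ they are nonzero linear forms on the $\o/\fp$-vector space $L_1/\fp L_1$; since a vector space over a field is never the union of two proper subspaces, some $ex$ avoids both kernels and so maps to a generator of $\mathfrak a$ under both (concretely one of $y_1,y_1',y_1+y_1'$, exactly as in \Cref{lem:inert-1}). The symmetric choice in $L_2$ produces $\bar e x$, and $x=ex+\bar e x$ settles the first part; note no isotropy is used here.

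For the second assertion I would correct the $x$ just found by the device of \Cref{lem:inert-1}, setting $x'=x-\sprodq{x}{x}\rho\sprod{u}{x}^{-1}u$ for a suitable $\rho\in\O$ with $\Tr(\rho)=1$. Since $u$ is isotropic and $\Tr(\rho)=1$, expansion gives $\sprodq{x'}{x'}=\sprodq{x}{x}(1-\Tr(\rho))=0$ and $\sprod{u}{x'}=\sprod{u}{x}$, while $x'\in L$ because $\norm(L)\subseteq\scale(L)$ forces $\sprodq{x}{x}\sprod{u}{x}^{-1}\in\O$; here invertibility of $\rho$ is not needed, only $\Tr(\rho)=1$. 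The one genuinely new point, which I expect to be the main obstacle, is keeping $\sprod{u'}{x'}\O=\scale(L)$. In the inert case $\O$ is a discrete valuation ring, so $\sprod{u}{u'}\O\subsetneq\scale(L)$ at once upgrades to $\sprod{u}{u'}\in\FP\scale(L)$ and the correction is absorbed; in the split case this fails, and $\sprod{u}{u'}\O\subsetneq\scale(L)$ says only that \emph{at least one} of $\beta(eu,\bar e u')$, $\beta(eu',\bar e u)$ lies in $\fp\mathfrak a$. I would resolve this by choosing $\rho$ adapted to the deficient coordinate: $\rho=e$ when $\beta(eu,\bar e u')\in\fp\mathfrak a$, and $\rho=\bar e$ otherwise (both lie in $\O$ with trace $1$). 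A coordinatewise valuation check then shows that in each coordinate the term subtracted from the corresponding coordinate of $\sprod{u'}{x}$ lands in $\fp\scale(L)$: in the coordinate carrying the \emph{generator} pairing the relevant $\rho$-component is $0$, so the correction vanishes outright, while in the other coordinate the deficient pairing contributes an extra power of $\fp$. Hence both coordinates of $\sprod{u'}{x'}$ remain generators of $\scale(L)$, and $x'$ is the desired isotropic vector.
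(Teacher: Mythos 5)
Your proof is correct and takes essentially the same route as the paper's: your idempotent decomposition and coordinate-wise selection of $ex$ and $\bar e x$ is exactly the paper's choice of $x=\lambda y+\mu y'$ with $\lambda,\mu\in\{0,1\}^2$, and your correction $x'=x-\sprodq{x}{x}\rho\sprod{u}{x}^{-1}u$ with $\rho\in\{e,\bar e\}$ picked according to which coordinate of $\sprod{u}{u'}$ is deficient is precisely the paper's $x=\tilde x+\alpha u$ with $\alpha$ supported on that coordinate (the paper's trace condition has a harmless sign slip that your version fixes). No substantive difference in method or generality.
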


\begin{proof}
 After rescaling the hermitian form we may assume that $\scale(L) = \O$.
 By assumption there exist $y,y' \in L$ with $\sprod y u \O= \sprod{y'}{u'}\O=\O$.
 There are $\lambda$ and $\mu$ in $\{0,1\}^2 \subseteq K \times K$ such that $x =\lambda y + \mu y'$
 satisfies
 \[\sprod x u \O = \sprod x {u'} \O=\O.\]
Now assume that $u$ and $u'$ are isotropic. By the first part there exists $\tilde{x}$ with $\sprod{\tilde{x}}{u}\O = \sprod{\tilde{x}}{u'}\O=\O$.
After multiplying $\tilde{x}$ by a unit, we may assume that $\sprod{\tilde{x}}{ u} = 1$.
Set $x = \tilde{x} + \alpha u$ for some $\alpha \in \O$ with
$\Tr(\alpha) = \sprodq{\tilde{x} }{\tilde{x}}$. Then $\sprodq{x}{x} = \sprodq{\tilde{x}}{\tilde{x}} + \Tr(\alpha) = 0$ and $\sprod{x} u  = \sprod{\tilde{x}}{ u} = 1$.
We have to choose $\alpha$ in such a way that \[ \sprod{x}{u'}= \sprod{\tilde{x}}{ u'} + \alpha \sprod{u}{u'} \in \O^\times.\]
If $\sprod{u}{u'} \in \fp \times \fp$ or $\sprod{u}{u'} \in \o^\times \times \fp$, we take $\alpha =(0,\sprodq{\tilde{x}}{\tilde{x}})$.
Otherwise $\sprod u {u'} \in \fp \times \o^\times$ and we may take $\alpha = (\sprodq{\tilde{x}}{\tilde{x}},0)$.
\end{proof}

\begin{lemma}\label{lem:symmetry-special}
  Let $L = a\O \perp N$ with $\sprodq a a = 1$ and $\scale(N) \subseteq \FP$. Let $a' \in L$ be an element with $\sprodq{a'}{a'} = 1$.
  If $|\o/\fp|\neq 2$, then there exists a product of symmetries which maps $a$ to $a'$.
\end{lemma}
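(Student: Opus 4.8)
The plan is to mirror the proof of \Cref{lem:refl-line}, the analogous statement in the inert case, replacing \Cref{lem:inert-1} by its split counterpart \Cref{lem:isotropic-y}. Since $\sprodq a a = 1$ is a unit we have $\scale(L) = \O$, and both $a$ and $a'$ satisfy $\sprod a L = \sprod{a'} L = \O = \scale(L)$; hence \Cref{lem:isotropic-y} is applicable to the pair $(a,a')$. The one genuinely new ingredient, compared with the inert case, is the choice of a unit $\rho \in \O^\times$ with $\Tr(\rho) = 1$, which is used to build an auxiliary symmetry.

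Producing such a $\rho$ is exactly where the hypothesis $|\o/\fp| \neq 2$ enters, and I expect this to be the crux of the argument. In the split case the trace is $\Tr(\rho_1,\rho_2) = \rho_1 + \rho_2$, so a unit $\rho = (\rho_1,\rho_2) \in \o^\times \times \o^\times$ with $\Tr(\rho) = 1$ amounts to two units of $\o$ summing to $1$, equivalently to a residue class $\bar\rho_1 \notin \{0,1\}$ in $\o/\fp$. Such a class exists precisely when $|\o/\fp| \geq 3$, which is why the residue field with two elements is genuinely exceptional here (in contrast to the inert case, where surjectivity of $\Tr\colon \O \to \o$ together with the rescaling trick always furnishes a suitable unit). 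Everything else should be routine once $\rho$ is fixed.

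With $\rho$ in hand I would split into two cases according to whether $\sprod a {a - a'}$ is a unit. If $\sprod a {a-a'}\O = \O$, then \Cref{sym:act} applied with $s = a - a'$ and $\sigma = \sprod a {a-a'}$ directly yields a symmetry in $U(L)$ carrying $a$ to $a'$, the membership condition $\sprod L s \sprod a s^{-1} s \subseteq L$ being immediate since $\sprod a s \in \O^\times$ and $\sprod L s \subseteq \scale(L) = \O$. If instead $\sprod a {a-a'} \in \FP$, I would use \Cref{lem:isotropic-y} to obtain $s \in L$ with $\sprod a s \O = \sprod{a'} s \O = \O$; writing $s = \alpha a + n$ with $\alpha \in \O$, $n \in N$ gives $\alpha = \sprod s a \in \O^\times$, whence $\sprodq s s = \Nr(\alpha) + \sprodq n n \in \o^\times + \fp = \o^\times$, using $\sprodq n n \in \scale(N)\cap K \subseteq \fp$. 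Then $\sigma = \rho\, \sprodq s s \in \O^\times$ satisfies $\Tr(\sigma) = \sprodq s s$, so by \Cref{lem:symmetry} the symmetry $S = S_{s,\sigma}$ lies in $U(L)$. Expanding $\sprod a {a - S(a')} = \sprod a {a-a'} + \sprod a s \sprod s {a'} \bar\sigma^{-1}$ shows this scalar is a unit (a unit plus an element of $\FP$), so the first case applies to the pair $(a, S(a'))$ and produces a symmetry $T \in U(L)$ with $T(a) = S(a')$; finally $S^{-1} T$, a product of two symmetries, maps $a$ to $a'$. The only points requiring care are the ideal computations confirming that the relevant scalars are units, which I expect to be routine given the explicit splitting $L = a\O \perp N$.
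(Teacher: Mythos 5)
Your skeleton matches the paper's: reduce to the case where $\sprod a{a-a'}$ is a unit by means of an auxiliary symmetry $S_{s,\sigma}$, with $s$ supplied by \Cref{lem:isotropic-y}, $\sprodq s s \in \o^\times$ read off from the shape $L = a\O \perp N$, and $\sigma$ a unit multiple of $\sprodq s s$ of trace $\sprodq s s$; your identification of where $|\o/\fp| \neq 2$ enters (existence of $\epsilon, 1-\epsilon \in \o^\times$) also agrees with the paper. However, your case division has a genuine gap: in the split case $\O = \o \times \o$ is \emph{not} a local ring, so $\sprod a{a-a'}\O \neq \O$ does not imply $\sprod a{a-a'} \in \FP$. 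The mixed cases $\sprod a{a-a'} \in \o^\times \times \fp$ and $\fp \times \o^\times$ are simply absent from your proof, and your key step fails there: in the identity $\sprod a {a - S(a')} = \sprod a{a-a'} + \sprod a s \sprod s{a'}\bar\sigma^{-1}$ the second summand is a unit of $\O$, but in the component where $\sprod a{a-a'}$ is a unit of $\o$, the sum of two units of $\o$ can lie in $\fp$ (e.g.\ $1 + 2 \equiv 0$ over $\FF_3$), so $\sprod a{a-S(a')}$ need not be a unit and the "first case" cannot be invoked.

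This mixed case is the real crux and is why the paper's proof is longer than your sketch. The paper assumes w.l.o.g.\ $\sprod a{a-a'} \in \o \times \fp$, takes $\sigma = \sprodq s s(\epsilon, 1-\epsilon)$, and for $|\o/\fp| > 3$ uses the remaining freedom in $\epsilon$ (three forbidden residues) to make $\sprod a{a-S_{s,\sigma}(a')}$ a unit; for $|\o/\fp| = 3$ no such $\epsilon$ exists, and instead the paper writes $a' = \alpha a + n$, deduces $\alpha\bar\alpha \equiv 1 \bmod \FP$ from $\sprodq{a'}{a'} = 1$ and $\scale(N) \subseteq \FP$, hence $\alpha \equiv \pm(1,1) \bmod \FP$, and concludes that under the mixed hypothesis one in fact has $\sprod a{a-a'} \in \FP$, i.e.\ the mixed case collapses into your $\FP$-case. (Incidentally, that norm argument works over any residue field: $\Nr(\alpha) \equiv 1 \bmod \fp$ forces the two components of $1 - \bar\alpha = \sprod a{a-a'}$ to be units or non-units simultaneously, so the mixed case is actually vacuous and your dichotomy could be repaired this way. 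But some such argument is indispensable, and your proposal contains none — it treats $\O$ as if it were local.)
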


\begin{proof}
  Note that the assumptions imply $\sprod a L = \sprod{a'} L = \O = \scale(L)$.

  If $\sprod a {a-a'}\O= \O$, then \Cref{sym:act} provides a symmetry that maps $a$ to $a'$.
  Otherwise assume that $\sprod a {a -a'}\in  \o \times \fp$.
  \Cref{lem:isotropic-y} provides $s \in L$ with \[\sprod s a \O = \sprod{s}{a'}\O= \O.\] Then $\sprod s L = \O $.
  By the shape of $L= a\O \perp N$, this implies that $\sprodq s s \in \O^\times$.
    Since  $|\o/\fp| \neq 2$,
 there exists $\epsilon \in \o^\times$ with
 $1 - \epsilon \in \o^\times$.
  Let $\sigma = \sprodq{s}{s} (\epsilon,1-\epsilon)$ or $\sigma =\sprodq{s}{s} (1-\epsilon,\epsilon)$. Then $\Tr(\sigma) =\sprodq s s$, and the symmetry $S_{s,\sigma}$ preserves $L$. We have
   \begin{equation}\label{eqn:special}
     \sprod a {a-S_{s, \sigma}(a')}= \sprod a {a-a'} + \sprod{s}{a'} \sprod a s \bar \sigma^{-1}.
   \end{equation}
   For $|\o/\fp|>3$ we can choose $\epsilon$ in such a way that $\sprod a {a-S_{s, \sigma}(a')}$ is a unit.
   For $|\o/\fp|=3$ write $a' = \alpha a + n $ with $\alpha \in \O$ and $n \in N$.
   Then
   \[1 = \sprodq{a'}{a'} = \alpha \bar \alpha \sprodq{a}{a} + \sprodq{n}{n} \equiv \alpha \bar \alpha \mod \FP \]
   implies that $\alpha \equiv (1,1) \mod \FP$ or $\alpha \equiv (-1,-1) \mod \FP$.
   Hence, using $\sprod{a}{a-a'} \in \o \times \fp$ it follows that $\sprod{a}{a-a'} = 1 - \bar \alpha \equiv 0\mod \FP$. Thus any choice of $\epsilon \in \o^\times$ with $1-\epsilon \in \o^\times$ works.
   It then follows from \Cref{sym:act} that there exists a symmetry which maps $a$ to $S_{s,\sigma}(a')$.
   %
   The case $\sprod a {a -a'}\in  \fp \times \o$ is analogous.
\end{proof}

\begin{lemma}\label{lem:symmetry-special-2}
  Let $L = a\O \perp N$ with $\sprodq a a = 1$ and $\scale(N) \subseteq \FP$. Let $a' \in L$ be an element with $\sprodq{a'}{a'} = 1$.
  If $|\o/\fp|= 2$, then there exists a product of symmetries which maps $a$ to $a'$.
\end{lemma}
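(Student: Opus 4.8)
The plan is to split the map $a\mapsto a'$ into two elementary single-symmetry moves and to reduce the one genuinely exceptional configuration to the benign one by a preliminary transvection. Write $a'=\alpha a+n$ with $\alpha\in\O$ and $n\in N$. Since $\sprodq{a'}{a'}=1$ and $\scale(N)\subseteq\FP$ force $\Nr(\alpha)=1-\sprodq n n\in\o^\times$, we have $\alpha\in\O^\times$, and because $|\o/\fp|=2$ every unit is $\equiv 1\pmod\fp$, so $\alpha\equiv 1\pmod\FP$ and $\sprodq n n\in\fp$. At the outset I would record the obstruction that rules out the method of \Cref{lem:symmetry-special}: every norm-$1$ vector $b\in L$ has $\sprod a b\in\O^\times$, hence $\sprod a{a-b}\in\FP$ is never a unit, so one cannot force $\sprod a{a-S(a')}$ to be a unit. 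The symmetries used here must therefore have an axis $s$ with $\sprod L s\subsetneq\scale(L)$, so that a non-unit $\sigma$ becomes admissible and the trace obstruction $\Tr(\O^\times)\subseteq\fp$ is circumvented.

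First I would set up two building blocks, each a single symmetry verified through \Cref{sym:act}. Block (A): for $\gamma,\gamma'\in\O^\times$ with $\Nr(\gamma)=\Nr(\gamma')$ and $\gamma\neq\gamma'$, the symmetry with axis $(\gamma-\gamma')a$ lies in $U(L)$ — its inclusion condition collapses to $\O(\gamma-\gamma')a\subseteq L$ — and it sends $\gamma a+n\mapsto\gamma' a+n$ for any fixed $n\in N$. Thus it suffices to move $a$ to $\gamma a+n$ for some \emph{admissible} $\gamma$ (meaning $\Nr(\gamma)=1-\sprodq n n$); a final application of (A) then corrects the coefficient to $\alpha$. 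Block (B): the symmetry with axis $(1-\gamma)a-n$ sends $a\mapsto\gamma a+n$, and it lies in $U(L)$ as soon as one can pick an admissible $\gamma$ with $\nu_\fp(1-\gamma_1)=\nu_\fp(1-\gamma_2)=1$: then $\sprod L{(1-\gamma)a-n}\subseteq\FP$ while $\sigma=1-\bar\gamma$ generates $\FP$, so the inclusion reduces to $\O\bigl((1-\gamma)a-n\bigr)\subseteq L$, which holds.

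The decisive point is a short valuation count. Writing $w_i=1-\gamma_i$, the relation $\sprodq n n=w_1+w_2-w_1w_2$ together with $\nu_\fp(w_1+w_2)\ge 2$ when $\nu_\fp(w_1)=\nu_\fp(w_2)=1$ (a consequence of $|\o/\fp|=2$) shows that an admissible $\gamma$ with $\nu_\fp(w_1)=\nu_\fp(w_2)=1$ exists exactly when $\nu_\fp(\sprodq n n)\ge 2$. Hence (B) followed by (A) settles every case with $\nu_\fp(\sprodq n n)\ge 2$, in particular $n=0$ (the pure rescaling $a\mapsto\alpha a$). The residual configuration is $\nu_\fp(\sprodq n n)=1$, where $n$ has minimal norm valuation and (B) is provably blocked by the same $|\o/\fp|=2$ parity; this is the main obstacle.

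To clear it I would spend one isotropic symmetry to reduce to the case just solved. Choose $\xi\in\O$ with $\Nr(\xi)=-\sprodq n n$, so that $x_0=\xi a+n\in L$ is isotropic; the symmetry $T=S_{x_0,\sigma}$ with $\sigma\in\O^\times\cap\ker\Tr$ is automatically in $U(L)$, since $\sprod L{x_0}\subseteq\scale(L)\subseteq\sigma\O$. A direct computation shows that $\kappa:=\sprod{a'}{x_0}$ has unit first component $\kappa_1$; choosing $\sigma=(\kappa_1,-\kappa_1)$ makes the $N$-component of $a'':=T(a')$ equal to $(0,\ast)\,n$, hence isotropic, so $\sprodq{\pi_N(a'')}{\pi_N(a'')}=0$. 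Now $a''=\gamma''a+n''$ with $\sprodq{n''}{n''}=0$, which is the already-handled case: (B) and (A) move $a$ to $a''$, and $T^{-1}$ (again a symmetry) carries $a''$ to $a'$, so $a$ reaches $a'$ through a product of at most three symmetries. I expect the only real work to lie in this last step — tuning the transvection so that it annihilates exactly one component of the $N$-part — while everything else is bookkeeping with \Cref{sym:act}.
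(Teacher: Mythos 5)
Your argument is correct, but it follows a genuinely different route from the paper's. The paper stratifies by the pair of component valuations $\sprod a {a-a'}\O=\fp^i\times\fp^j$ relative to the scale $\scale(N)=\FP^k$ (four cases), reduces everything to the case $i,j\le k$ where \Cref{sym:act} applies directly, and performs the reduction with auxiliary symmetries whose axis $s$ comes from \Cref{lem:isotropic-y} (pairing unimodularly with both $a$ and $a'$) and whose $\sigma$, e.g.\ $\sprodq s s\, p^{-1}(1+p,-1)$, lies outside $\O$. You instead stratify by $\nu_\fp(\sprodq n n)$, build two explicit one-symmetry moves---axis $(\gamma-\gamma')a$ to change the coefficient along $a$, axis $(1-\gamma)a-n$ to move $a$ off the line---settle $\nu_\fp(\sprodq n n)\ge 2$ with two symmetries, and reduce $\nu_\fp(\sprodq n n)=1$ to that case by conjugating with an isotropic-axis symmetry $T=S_{x_0,\sigma}$ that annihilates one split component of the $N$-part of $a'$. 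Your route never uses \Cref{lem:isotropic-y}, never needs $\sigma\notin\O$, makes no reference to $k$, and gives the explicit bound of three symmetries; the paper's route runs in parallel with the proof of \Cref{lem:symmetry-special} and reuses lemmas needed elsewhere. Interestingly, your vector $x_0=\xi a+n$ is the same device the paper deploys in its fourth case (there $u=(1,\alpha\bar\alpha-1)a+n$ with $\sigma=(1,-1)$), but you use it on the whole hard locus rather than only at the boundary case $k=i=1<j$.

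Two points need tightening in a final write-up. First, the claim that $\kappa_1=\sprod{a'}{x_0}_1$ is a unit does \emph{not} hold for every $\xi$ with $\Nr(\xi)=-\sprodq n n$: taking $\xi=(1,-\sprodq n n)$ gives $\kappa_1=\sprodq n n\,(1-\alpha_1)\in\fp^2$. You must fix $\xi=(-\sprodq n n,1)$, which yields $\kappa_1=\alpha_1+\sprodq n n\in\o^\times$; with that choice both $T\in U(L)$ (via \Cref{lem:symmetry}, since $\sprod L {x_0}\subseteq\O=\sigma\O$) and the vanishing of the first component of the $N$-part go through exactly as you describe. Second, since $E=K\times K$ has zero divisors, Block (A) needs the remark that $\sigma=\gamma\overline{(\gamma-\gamma')}$ is actually invertible: this holds because two elements of $\O^\times$ with equal norms that agree in one component agree in both, so $\gamma\neq\gamma'$ forces $\gamma-\gamma'\in E^\times$. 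Both repairs are one line each.
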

\begin{proof}
   Write $a' = \alpha a + n$ for some $\alpha \in \O^\times$ and $n \in N$. Since $|\o/\fp|=2$, this implies that $\alpha \equiv 1 \mod \FP$. Thus $\sprod{a}{a-a'} = 1 - \bar \alpha \equiv 0 \mod \FP$.
  We have $\scale(N) = \FP^k$ with $k\geq 1$ and
  \[\sprod a {a-a'}\O = \fp^i \times \fp^j \mbox{ with } i,j \geq 1.\]

  First case:  ($i,j \leq k$).
  Then $\sprod a {a-a'}\O \supseteq \scale(N)$ and
  $L = \O a \perp N$ imply that $\sprod L {a-a'} = \sprod a {a-a'} \O$.
  Now \Cref{sym:act} provides us with a symmetry that maps $a$ to $a'$.

  Second case: ($i,j \geq 2$).
  As before \Cref{lem:isotropic-y} provides $s \in L$ with $\sprod s a \O = \sprod{s}{a'}\O= \O$. Then $\sprod s L = \O = \scale(L)$
  and $\alpha:=\sprodq s s \in \o^\times$. Set $\sigma = \alpha p^{-1}(1+p,-1)$. Then $\Tr(\sigma) =\alpha$
  and $\sprod L s \O=\O\subseteq  p^{-1}\O=\sigma \O$. Thus $S_{s,\sigma}$ preserves $L$.
  We obtain
  \[\sprod a {a-S_{s, \sigma}(a')}= \sprod a {a-a'} + \sprod{s}{a'} \sprod a s \bar\sigma^{-1}.\]
  As $\sprod s{a'} \sprod a s\bar\sigma^{-1}\O= \FP$
  and $\sprod a {a - a'} \in \FP^2 $, this implies $\sprod a {a - S_{s,\sigma}a'} \O = \FP$.
  By the first case we find a symmetry mapping $S_{s,\sigma}(a')$ to $a$.

  Third case: ($i < k < j$).
    We choose $s \in L$, $\alpha = \sprodq s s \in \o^\times$ as in the third case and set $\sigma = \alpha p^{-k}(1 + p^k, -1)$.
    As before, $S_{s, \sigma}$ preserves $L$. Since $\sprod a {a - a'} = \fp^i \times \fp^j$
    and $\sprod s{a'} \sprod a s \bar \sigma^{-1}\O = \FP^k$, the
    assumption $i < k < j$ implies $\sprod a {a - S_{s, \sigma}a'} \O = \fp^i \times \fp^k$.
    By the first case we find a symmetry mapping $S_{s, \sigma}(a')$ to $a$.

    Fourth case: ($k=1$, $i=1$ and $j>1$).
  Since $a' = \alpha a + n$, we have $\sprodq n n = 1-\alpha \bar \alpha$.
  Set $u = (1,\alpha \bar \alpha-1)a+n$.
  Then $u$ is isotropic and we %
  have
  \[ \sprod a u  = (\alpha \bar \alpha -1,1) \quad \mbox{
  and }\quad \sprod{u}{a'} = (1,\alpha \bar \alpha-1) \bar \alpha + 1-\alpha \bar \alpha.\]
    Note that by assumption $(1 - \bar \alpha)\O = \sprod a {a - a'}\O = \fp \times \fp^j$ and therefore %
    \[\bar \alpha \equiv (1 + p,1) \mod \FP^2 \quad \text{and}\quad \alpha \bar \alpha -1 \equiv p \mod \FP^2;\]
    hence
    \begin{eqnarray*}
     \sprod a u \sprod u {a'} %
     &\equiv & (p,0) \mod \FP^2.
    \end{eqnarray*}
  With $\sigma = (1,-1)$, we obtain the symmetry $S_{u,\sigma}$ and calculate that
  \begin{eqnarray*}
    \sprod a {a-S_{u,\sigma}(a')}%
                &=& \sprod a {a-a'} + \sprod u {a'} \sprod a u \bar \sigma^{-1}.
  \end{eqnarray*}
  Since $\sprod a {a-a'} \equiv (p,0) \mod \fp^2\O$, we have $\sprod a {a-S_{u, \sigma}(a')} - \sprod u {a'} \sprod a u \bar\sigma^{-1} \equiv 0 \mod \fp^2\O$.
  By the second case, we find a product of symmetries which maps $a$ to $S_{u,\sigma}(a)$.
  The case ($k=1$, $i>1$ and $j=1$) is dealt with analogously.
\end{proof}

\begin{lemma}\label{lem:rotation-reflection}
 Every rescaled Eichler isometry of $L$ with respect to a hyperbolic plane $P$ splitting $L$ is a product of two symmetries and an element of $U(L)$ that fixes $P$ pointwise.
\end{lemma}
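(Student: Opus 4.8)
The statement is the split-case analogue of \Cref{lem:rotation-reflection-inert}, which we deduced from \cite[Hilfssatz 7]{Boge1966} together with the skew element $\omega$. Since $E = K \times K$ is not a field, Böge's lemma does not apply verbatim; however, the only feature of $E$ that his argument exploits is the existence of a skew unit, and this survives in the split case: the element $\omega = (1,-1) \in \O^\times$ satisfies $\Tr(\omega) = 0$. The plan is therefore to reprove the decomposition directly, using $\omega$ in place of the skew element of the inert case.

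First I would record the action of the rescaled Eichler isometry on the plane $P = \O u \oplus \O v$. Since $u$ is isotropic and $y \in P^\perp$, all correction terms in the definition of $E_y^\mu$ vanish on $u$, so $E_y^\mu(u) = u$; evaluating on $v$ gives $E_y^\mu(v) = v + y + \mu u \defeq w$. Using the defining relation $\Tr(\mu \sprod u v) = -\sprodq y y$ one checks $\sprodq w w = 0$, so $w$ is again isotropic. It therefore suffices to construct two symmetries $S_1, S_2 \in U(L)$ whose product fixes $u$ and maps $v$ to $w$: then $g \defeq (S_1 S_2)^{-1} E_y^\mu$ lies in $U(L)$, fixes both $u$ and $v$, hence is the identity on $P$, and $E_y^\mu = S_1 S_2\, g$ is the desired factorization.

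For the construction I would take $S_2 = S_{u, \sigma_2}$, a transvection along the isotropic vector $u$; here $\sprodq u u = 0$ forces $\Tr(\sigma_2) = 0$, so $\sigma_2$ must be skew, which is exactly where $\omega$ enters. This $S_2$ fixes $u$ and $P^\perp$ and shifts $v$ by a multiple of $u$. For the $y$-component I would take $S_1 = S_{y + cu, \sigma_1}$ with $c \in \O$; since $y + cu \perp u$, it fixes $u$, and $\sprodq{y + cu}{y + cu} = \sprodq y y$, so the trace condition $\Tr(\sigma_1) = \sprodq y y = -\Tr(\mu \sprod u v)$ is met by choosing $c - \mu$ to be a skew unit, i.e.\ a unit multiple of $\omega$. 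Comparing the $y$- and $u$-coefficients of $S_1 S_2(v)$ with those of $w$ then pins down $\sigma_2$ as a unit multiple of $\sprod v u$ and $\sigma_1 = -\bar c\, \sprod v u$ with $\bar c = \bar\mu - \omega$.

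The main obstacle is lattice preservation: by \Cref{lem:symmetry} it suffices that $\sprod L {s_i} \subseteq \sigma_i \O$ for $i = 1, 2$. The hypothesis $\sprod L y \subseteq \sprod u v\, \O$ on $y$, together with $\sprod L u = \sprod u v\, \O$, guarantees $\sprod L {y+cu}, \sprod L u \subseteq \sprod u v\, \O$, so the issue reduces to showing $\sigma_i \O = \sprod u v\, \O$. For $\sigma_2$ this is immediate as it is a unit multiple of $\sprod v u$; for $\sigma_1 = -(\bar\mu - \omega)\sprod v u$ it amounts to $\bar\mu - \omega \in \O^\times$, which because $\O = \o \times \o$ is a coordinate-wise condition that can fail. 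Resolving it requires using the freedom in the choice of the skew unit $\omega = (a,-a)$ with $a \in \o^\times$ to make $\bar\mu - \omega$ a unit in both coordinates, or, where no single $a$ works, appealing to \Cref{sym:act} (whose lattice condition is weaker than that of \Cref{lem:symmetry}) in place of the direct construction. I expect this coordinate bookkeeping — and in particular the residual case where $\mu$ fails to be a unit in one component and $|\o/\fp|$ is small — to be the delicate point, to be handled exactly as in the case analyses of \Cref{lem:symmetry-special} and \Cref{lem:symmetry-special-2}.
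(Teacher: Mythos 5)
Your overall strategy is the same as the paper's: since $E_y^\mu$ fixes $u$ and sends $v$ to $w = \mu u + v + y$, it suffices to produce symmetries in $U(L)$ fixing $u$ and carrying $v$ to $w$, reducing to the case of unit $\mu$ (handled by \Cref{sym:act}) by first correcting the $u$-coefficient with a symmetry built from a skew element $\omega = (t,-t)$. Normalizing $\sprod u v = 1$, your explicit two-symmetry decomposition with $s_2 = u$, $s_1 = y + cu$, $c = \mu + \omega$ is consistent, and you correctly isolate the one real constraint: $c$ must be a unit of $\O = \o \times \o$ (the lattice condition for $S_{u,\sigma_2}$ is in fact automatic, since $\sigma_2\O = \sprod v u \omega^{-1}\O \supseteq \sprod v u \O = \sprod L u$ for any nonzero skew $\omega \in \O$). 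Whenever $|\o/\fp| \geq 3$, or $\mu \in \fp \times \fp$, or $\mu \in \O^\times$, a suitable $t$ exists and your argument goes through; this reproduces the paper's first three cases.

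However, the residual case you flag is a genuine gap, not bookkeeping, and neither of your proposed fallbacks closes it. When $|\o/\fp| = 2$ and $\mu = (\mu_1,\mu_2) \in \o^\times \times \fp$ (or $\fp \times \o^\times$), the requirements $\mu_1 + t \in \o^\times$ and $\mu_2 - t \in \o^\times$ exclude the residue classes of $-\mu_1$ and of $\mu_2$, which are distinct precisely because $\Tr(\mu) \in \o^\times$; together they exhaust $\o/\fp \cong \FF_2$, so no skew $\omega$ works, unit or not. Fallback (i), applying \Cref{sym:act} directly to $x = v$, $x' = w$: here $s = -\mu u - y$ and $\sprod v s = -\bar\mu$; since $\sprodq{y}{y} = -\Tr(\mu)$ is a unit in this case, $\sprod L s = \O$, and the condition demands $\bar\mu^{-1}s \in L$, which fails because $\bar\mu^{-1}\mu \notin \O$ and $\bar\mu^{-1} \notin \O$. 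Fallback (ii), the case analyses of \Cref{lem:symmetry-special,lem:symmetry-special-2}: these concern a norm-one vector $a$ orthogonal to a complement of deeper scale and contain no construction applicable to the isotropic configuration at hand. The missing idea is the paper's ad hoc symmetry for exactly this case: because $\sprodq{y}{y}$ is a unit, the vector $s = u + (0,1)y$ with $\sigma = (-1,1)$ satisfies $\sprodq{s}{s} = (0,1)(1,0)\sprodq{y}{y} = 0 = \Tr(\sigma)$ and $\sprod L s \subseteq \O = \sigma\O$, so $S_{s,\sigma} \in U(L)$; it fixes $u$ and sends $w$ to $\mu' u + v + (1,0)y$ with $\mu' = \mu + (1,-1) + (1,0)\sprodq{y}{y} = (1-\mu_2,\mu_2-1)$ a unit, reducing to the unit case. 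This essential use of the zero divisors $(1,0),(0,1)$ of the split algebra, together with the fact that $\Tr(\mu)$ is a unit precisely in the bad case, is what your proposal lacks.
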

\begin{proof}

  Let $E_{y}^{\mu}$ be a rescaled Eichler isometry with respect to the hyperbolic pair $(u, v)$ splitting $L$. We may assume $\sprod u v =1$ so that in particular $\scale(L) = \O$.
  We have $E_{y}^{\mu}(v) = \mu u + v + y$ and \[\sprod{v -E_{y}^{\mu}(v)} v \O = \mu \O = \mu \scale(L).\]
  If $\mu$ is a unit, then \Cref{sym:act} yields a symmetry $S_{s,\sigma}$
  that maps
  $v$ to $E_{y}^{\mu}(v)$ and fixes $u$.

If $\mu$ is not a unit and $|\o/\fp|\neq 2$, then we claim that we can find
$\omega \in \O^\times$ with
\begin{enumerate}
  \item[(a)] $\Tr(\omega) =0$ and
  \item[(b)] $\mu - \omega^{-1} \in \O^\times$.
\end{enumerate}
Condition (a) forces $\omega = (t,-t)$ for some $t \in \o^\times$. If $\mu \in \fp \times \fp$, then $t=1$ suffices.
But if $\mu \in \o^\times \times \fp$ (or $\fp \times \o^\times)$,
then
we can take $t = \epsilon \in \o^\times$ such that $\mu \pm \epsilon^{-1}$ is a unit, since $|\o/\fp| \neq 2$.
Now by (b), using \Cref{sym:act} we find a symmetry mapping $v$ to
\[S_{u,\omega}(E_{y}^{\mu}(v)) = (\mu-\omega^{-1})u + v + y.\]
And since $\omega \in \O^\times$, the symmetry $S_{u,\omega}$ preserves $L$.

Suppose now that $\mu$ is not a unit and $|\o / \fp|=2$.
If $\mu \in \fp \times \fp$, we can argue as before.
Assume that  $\mu \in \o^\times \times \fp$.
Then $-\sprodq{y}{y}=\Tr(\mu) \in \o^\times$.
For $s = u + (0,1) y$ and $\sigma =(-1,1)$ the symmetry $S_{s,\sigma}$ preserves $L$ and fixes $u$. We have
\[\sprod{E_{y}^{\mu}(v)}{s} = \sprod{v + \mu u + y }{u + (0,1) y }= 1 + (1,0)\sprodq{y}{y} \]
as well as 
\[S_{s,\sigma}(E_{y}^{\mu}(v)) =  \mu' u + v + y'\]
with $\mu' = (\mu + (1,-1) + (1,0)\sprodq{y}{y})$,
and $y'=(1,0)y$.
Hence $\mu' \in \O^\times$ is a unit, and we are back to the first case. The case that $\mu \in \fp \times \o^\times$ is dealt with analogously.
\end{proof}

\subsection{Generation of unitary groups}

Let now be $E/K$ be inert or split and $L$ a hermitian $\O$-lattice.

\begin{proposition}\label{prop:isotropic-symmetries}
 Let $u,u' \in L$ be isotropic with $\sprod{u}{L}=\sprod{u'}{L}=\scale(L)$.
 Then there is a product of symmetries which maps $u$ to $u'$.
\end{proposition}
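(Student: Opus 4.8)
The plan is to reduce everything to \Cref{sym:act}, which converts a pair of vectors of equal length into an explicit symmetry, together with the auxiliary isotropic vectors supplied by \Cref{lem:inert-1} in the inert case and \Cref{lem:isotropic-y} in the split case (both have identical statements, so the argument is uniform). Since $u' \in L$ we have $\sprod{u}{u'} \in \sprod{u}{L} = \scale(L)$, hence $\sprod{u}{u'}\O \subseteq \scale(L)$, and the natural dichotomy is whether this inclusion is an equality.

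First I would treat the \emph{transverse} case $\sprod{u}{u'}\O = \scale(L)$. Here I set $s = u - u'$ and $\sigma = \sprod{u}{u - u'}$; as $u$ is isotropic this is $\sigma = -\sprod{u}{u'}$, which is nonzero and generates $\scale(L)$. By \Cref{sym:act} the symmetry $S_{s,\sigma}$ sends $u$ to $u'$, and membership $S_{s,\sigma} \in U(L)$ reduces to checking $\sprod{L}{s}\sprod{u}{s}^{-1}s \subseteq L$. Since $\sprod{u}{s}\O = \scale(L)$ and $\sprod{L}{s} \subseteq \scale(L)$, the coefficient ideal $\sprod{L}{s}\sprod{u}{s}^{-1}$ lands in $\O$, so the inclusion is immediate and a single symmetry suffices.

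The remaining case $\sprod{u}{u'}\O \subsetneq \scale(L)$ (which in particular covers $u \perp u'$ and $u = u'$) is where the two isotropic vectors are too close to aligned for a single symmetry to work; this is the main obstacle. The idea is to route through an auxiliary isotropic vector. Because $u, u'$ are isotropic and $\sprod{u}{u'}\O \neq \scale(L)$, the second assertions of \Cref{lem:inert-1} and \Cref{lem:isotropic-y} produce an isotropic $x \in L$ with $\sprod{x}{u}\O = \sprod{x}{u'}\O = \scale(L)$. Both pairs $(u,x)$ and $(x,u')$ now fall under the transverse case: applying the first step to $u, x$ yields a symmetry mapping $u \mapsto x$, and applying it to $x, u'$ yields a symmetry mapping $x \mapsto u'$. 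Composing the two gives a product of symmetries taking $u$ to $u'$, completing the reduction (and the case $u = u'$ is absorbed here as well).

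I expect the only genuine content beyond bookkeeping to sit inside the auxiliary lemmas, which already guarantee the transversal isotropic vector $x$; once that is in hand the proposition is a two-symmetry argument. The sole computation in each application is the unitarity condition $\sprod{L}{s}\sprod{u}{s}^{-1}s \subseteq L$, and it is routine precisely because the relevant pairings $\sprod{u}{x}$ and $\sprod{x}{u'}$ generate $\scale(L)$.
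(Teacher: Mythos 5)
Your proposal is correct and follows essentially the same route as the paper: handle the case $\sprod{u}{u'}\O=\scale(L)$ directly via \Cref{sym:act}, and otherwise use the isotropic auxiliary vector supplied by \Cref{lem:inert-1} (inert) or \Cref{lem:isotropic-y} (split) to pass through an intermediate vector with two symmetries. Your explicit verification of the membership condition $\sprod{L}{s}\sprod{u}{s}^{-1}s \subseteq L$ is exactly the routine check the paper leaves implicit.
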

\begin{proof}
 If $\sprod{u}{u'}\O=\scale(L)$, then we can find a symmetry mapping $u$ to $u'$ by \Cref{sym:act}. Otherwise, by \Cref{lem:inert-1} in the inert case and by \Cref{lem:isotropic-y} in the split case, we find an isotropic $y \in L$ with $\sprod{u}{y}\O = \sprod{u'}{y}\O = \scale(L)$. Then there are symmetries mapping $u$ to $y$ and $y$ to $u'$.
\end{proof}

\begin{proposition}\label{prop:hyperbolic-refl}
  Let $P \subseteq L$ be a hyperbolic plane with $\scale(P) =\scale(L)$. Then $U(L) = S(L)U(P^\perp)$.
\end{proposition}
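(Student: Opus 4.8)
The plan is to take an arbitrary $g \in U(L)$ and correct it by generators --- first on the isotropic vector $u$, then on its partner $v$ --- until the corrected map fixes $P$ pointwise, all the while keeping track of which generators are used. First I would record that the hypotheses force $P$ to split $L$. Writing $P = \O u \oplus \O v$ with $(u,v)$ a hyperbolic pair and $P \cong H(i)$, the plane is $\FP^i$-modular with $\scale(P) = \scale(L) = \FP^i$; a modular sublattice whose scale equals that of the ambient lattice is an orthogonal direct summand, so $L = P \perp P^\perp$ and $(u,v)$ is a hyperbolic pair splitting $L$. Since $\sprod u v$ generates $\scale(P) = \scale(L)$, this also gives $\sprod u L = \scale(L)$.

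Given $g \in U(L)$, the vector $g(u)$ is again isotropic and satisfies $\sprod{g(u)}{L} = \sprod{g(u)}{g(L)} = \sprod{u}{L} = \scale(L)$. Hence by \Cref{prop:isotropic-symmetries} there is a product of symmetries $\tau$ with $\tau(g(u)) = u$, so that $g_1 \defeq \tau g$ fixes $u$. Next I would decompose $g_1(v)$ along $L = \O u \oplus \O v \oplus P^\perp$. Unitarity together with $g_1(u) = u$ gives $\sprod{u}{g_1(v)} = \sprod{g_1(u)}{g_1(v)} = \sprod{u}{v}$, and by conjugate-linearity in the second argument this pins the $v$-coefficient to $1$; thus $g_1(v) = \alpha u + v + y_0$ with $\alpha \in \O$ and $y_0 \in P^\perp$.

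The core of the argument is to undo this correction by a single rescaled Eichler isometry. Taking $y = y_0$ and $\mu = \alpha$, I would check the two defining constraints: the scale condition $\sprod{L}{y_0} \subseteq \scale(L) = \sprod u v \O$ holds automatically, while expanding $0 = \sprodq{g_1(v)}{g_1(v)}$ and using $y_0 \in P^\perp$ collapses to exactly $\Tr(\alpha \sprod u v) = -\sprodq{y_0}{y_0}$, which is the trace condition. Thus $E_{y_0}^{\alpha} \in U(L)$ is defined; it fixes $u$ and sends $v$ to $v + y_0 + \alpha u = g_1(v)$. Consequently $g_2 \defeq (E_{y_0}^{\alpha})^{-1} g_1$ fixes both $u$ and $v$, hence fixes $P$ pointwise, so $g_2 \in U(P^\perp)$.

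It remains to absorb the Eichler isometry into the symmetry subgroup. By \Cref{lem:rotation-reflection} in the split case (resp. \Cref{lem:rotation-reflection-inert} in the inert case) one may write $E_{y_0}^{\alpha} = \tau' f$ with $\tau'$ a product of two symmetries and $f \in U(L)$ fixing $P$ pointwise, i.e. $f \in U(P^\perp)$. Unwinding the definitions gives $g = \tau^{-1} E_{y_0}^{\alpha} g_2 = (\tau^{-1}\tau')(f g_2)$, where $\tau^{-1}\tau' \in S(L)$ and $f g_2 \in U(P^\perp)$; since the reverse inclusion $S(L)\,U(P^\perp) \subseteq U(L)$ is clear, this proves $U(L) = S(L)\,U(P^\perp)$. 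I expect the main obstacle to be the bookkeeping around the rescaled Eichler isometry: verifying its two constraints --- in particular recognizing the trace condition as nothing but the isotropy of $g_1(v)$ --- and respecting the conjugate-linearity conventions when computing $\sprod{u}{g_1(v)}$ and expanding $\sprodq{g_1(v)}{g_1(v)}$, as well as keeping the factor order so that the final product genuinely lands in $S(L)\,U(P^\perp)$ rather than in a longer alternating product.
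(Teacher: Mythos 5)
Your proof is correct and follows essentially the same route as the paper's: fix $u$ by \Cref{prop:isotropic-symmetries}, recognize the residual correction on $v$ as a rescaled Eichler isometry (with the trace condition coming from isotropy of the image of $v$), and absorb it into $S(L)\,U(P^\perp)$ via \Cref{lem:rotation-reflection-inert,lem:rotation-reflection}. The only differences are cosmetic: the paper normalizes $\sprod{u}{v}=1$ and leaves implicit both the fact that $P$ splits $L$ and the verification of the Eichler scale condition, which you spell out.
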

\begin{proof}
  Let $P = \O u \oplus \O v$ with $(u,v)$ a hyperbolic pair splitting $L$. We may assume that $\sprod u v =1$. Let now $\varphi \in U(L)$. By \Cref{prop:isotropic-symmetries} we find a product of symmetries $S$ with $S(\varphi(u)) = u$.
  We can write $v' = S(\varphi(v))$ as $v' = \mu u + \lambda v +y$ for some $y \in P^\perp$ and $\lambda, \mu \in \O$. Since
  $1 = \sprod{S(\varphi(u))}{S(\varphi(v))} = \sprod u {S(\varphi(v))}$, we have $\lambda =1$. Further $0 = \sprodq{S(\varphi(v))}{S(\varphi(v))} = \Tr(\mu) + \sprodq y y$. Thus $E_{y}^{\mu} \in U(L)$. Since $E_{y}^{\mu}(u) = u$ and $E_y^\mu(v) = v'$, the claim follows from \Cref{lem:rotation-reflection-inert,lem:rotation-reflection}.
\end{proof}

\begin{proposition}\label{prop:line-symmetries}
  Let $L = a\O \perp N$ with $\sprodq a a = 1$ and $\scale(N) \subseteq \FP$.
  If $a' \in L$ is an element with $\sprodq{a'}{a'} = 1$, then there exists a product of symmetries which maps $a$ to $a'$.
\end{proposition}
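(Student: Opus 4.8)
The plan is to reduce the proposition to the three preceding lemmas by a case distinction on the étale algebra $E$ and, in the split case, on the size of the residue field. Indeed, the hypotheses of \Cref{lem:refl-line}, \Cref{lem:symmetry-special}, and \Cref{lem:symmetry-special-2} are precisely those of the proposition (note that $a\O \perp N$ and $\O a \perp N$ denote the same lattice), so once the correct lemma is selected in each case, the desired product of symmetries is produced directly. No new computation is needed at the level of the proposition itself; the work is entirely in choosing the right citation.

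First I would treat the inert case, where $E/K$ is an unramified quadratic field extension. Here \Cref{lem:refl-line} applies verbatim and yields even a single symmetry of $L$ carrying $a$ to $a'$, so in particular a product of symmetries exists. Next I would treat the split case $E = K \times K$, and subdivide according to the residue field. If $|\o/\fp| \neq 2$, then \Cref{lem:symmetry-special} provides a product of symmetries mapping $a$ to $a'$. If instead $|\o/\fp| = 2$, the same conclusion follows from \Cref{lem:symmetry-special-2}. Since $E/K$ is assumed inert or split throughout this subsection, these alternatives are exhaustive, completing the proof.

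Because all the analytic content has already been absorbed into the lemmas, I do not expect a genuine obstacle at the level of the proposition. The only points requiring care are to confirm that the case split is complete and that the hypotheses genuinely align: in particular one should check that the normalization $\sprodq a a = 1$ together with $\scale(N) \subseteq \FP$ forces $\scale(L) = \O$, since this equality is used (and noted) inside each lemma. The substantive difficulty was pushed entirely into \Cref{lem:symmetry-special-2}, where the residue-field-of-size-two split case required a delicate chain of intermediate symmetries organized by the component valuations $i, j$ of $\sprod a {a - a'}$ relative to the scale exponent $k$ of $N$; at the level of the present proposition that difficulty is already resolved and merely invoked.
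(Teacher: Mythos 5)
Your proposal is correct and matches the paper's own proof exactly: the inert case is delegated to \Cref{lem:refl-line} and the split case to \Cref{lem:symmetry-special} and \Cref{lem:symmetry-special-2} according to the size of the residue field. Your additional remarks on hypothesis alignment are fine but not needed beyond what the cited lemmas already state.
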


\begin{proof}
  In the inert case this follows from \Cref{lem:refl-line}, while in the split case this follows from
  \Cref{lem:symmetry-special,lem:symmetry-special-2}
\end{proof}

\begin{theorem}\label{thm:splitinert-symmetry}
  If $E/K$ is split or inert, then
the unitary group $U(L)$ is generated by symmetries.
\end{theorem}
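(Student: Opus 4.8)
The plan is to prove $U(L) = S(L)$ by induction on $\rk(L)$, feeding everything into the two structural propositions \Cref{prop:hyperbolic-refl} and \Cref{prop:line-symmetries} together with the Jordan splitting of Jacobowitz~\cite{Jacobowitz1962}. Since $U(\prescript{a}{}{L}) = U(L)$ and the symmetries of $\prescript{a}{}{L}$ are exactly the symmetries of $L$, I would first rescale the hermitian form so that $\scale(L) = \O$; proving the statement for the rescaled lattice then yields it for $L$. The base case $\rk(L) = 0$ is trivial, and in the inductive step I split off either a hyperbolic plane or an anisotropic line of unit norm, in each case reducing to an orthogonal complement of strictly smaller rank.

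Concretely, the classification of Jacobowitz in the inert and split cases tells us that a lattice of scale $\O$ is either of the form $L = H(0) \perp N'$, or of the form $L = \O a \perp N$ with $\scale(N) \subseteq \FP$, so that $\sprodq a a \in \o^\times$. In the first case $H(0)$ is a hyperbolic plane of scale $\O = \scale(L)$ splitting $L$, and \Cref{prop:hyperbolic-refl} gives $U(L) = S(L)\,U(N')$. In the second case I first normalize the generator: because $E/K$ is unramified, the unit norm map $\Nr \colon \O^\times \to \o^\times$ is surjective, so after replacing $a$ by $\lambda a$ for a suitable $\lambda \in \O^\times$ I may assume $\sprodq a a = 1$, leaving the splitting unchanged. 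Given $\varphi \in U(L)$, its image $a' = \varphi(a)$ again satisfies $\sprodq{a'}{a'} = 1$, so \Cref{prop:line-symmetries} supplies a product of symmetries $T$ with $T(a) = a'$. Then $\psi = T^{-1}\varphi$ fixes $a$, hence preserves $(\O a)^\perp = N$ and restricts to an element of $U(N)$; thus $\varphi = T\psi \in S(L)\,U(N)$, giving $U(L) = S(L)\,U(N)$.

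In either case the orthogonal complement ($N'$ or $N$) has strictly smaller rank, so by the induction hypothesis its unitary group is generated by symmetries. Since any symmetry of a direct summand extends, by fixing the complementary summand, to a symmetry of $L$, we have $U(N') = S(N') \subseteq S(L)$ and likewise $U(N) = S(N) \subseteq S(L)$; hence $U(L) = S(L)$, closing the induction. The substantive work has already been absorbed into \Cref{prop:hyperbolic-refl} and \Cref{prop:line-symmetries}, so at this level the only points needing care are organizational: reducing to $\scale(L) = \O$, invoking the Jacobowitz dichotomy in precisely these two shapes, and — the one place where unramifiedness re-enters here — using surjectivity of the unit norm map to normalize $\sprodq a a$ to $1$ so that \Cref{prop:line-symmetries} applies. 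I expect no genuine obstacle beyond this bookkeeping; in particular the rank-one situation requires no separate argument, being exactly the instance $N = 0$ of the second case.
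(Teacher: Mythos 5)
Your proof is correct and follows essentially the same route as the paper: rescale so that $\scale(L)=\O$, split off either a unimodular hyperbolic plane or a rank-one modular summand via the Jacobowitz classification, apply \Cref{prop:hyperbolic-refl} respectively \Cref{prop:line-symmetries}, and induct on the rank. The only divergence is bookkeeping the paper leaves implicit: you normalize $\sprodq{a}{a}=1$ via surjectivity of the unit norm map $\Nr\colon\O^\times\to\o^\times$, where one could equally rescale the hermitian form by the unit $\sprodq{a}{a}^{-1}$, which changes neither $U(L)$, nor $S(L)$, nor the condition $\scale(L)=\O$.
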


\begin{proof}
  Since rescaling $L$ does not change the unitary group, we may assume that $\scale(L) = \O$.
  Now write $L = N \perp M$ with $N$ an $\scale(L)$-modular sublattice and $\scale(M) \subseteq \FP$.
  If $\rk N = 1$, then from \Cref{prop:line-symmetries} it follows that $S(L)U(N^\perp) = U(L)$.
  On the other hand if $\rk N \geq 2$, then $L$ contains an $\scale(L)$-modular hyperbolic plane $P$ and by \Cref{prop:hyperbolic-refl} we obtain $S(L)U(P^\perp)=U(L)$.
  In both cases we proceed by induction.
\end{proof}

\section{Ramified dyadic}\label{sec:dyadic}
We now consider the situation where $E/K$ is a quadratic ramified dyadic extension. We denote by $\pi$ a fixed prime element of $\O$ and set $p = \pi \bar \pi$, which is a prime element of $\o$.
Let $\FD$ be the different of the extension $E/K$, that is, $\FD^{-1} = \{ x \in E \mid \Tr(x \O) \subseteq \o\}$.
Denote by $e \in \ZZ$ the valuation of the different $\FD$, that is, $\FD = \mathfrak P^e$.
From~\cite[Ch. V, \S 3, Lemma 3]{Serre1979} it follows that for all $i \in \ZZ$, $i \geq 0$, we have
\[ \Tr(\mathfrak P^i) = \mathfrak p^{\lfloor (i + e)/2 \rfloor}.\]

If $e = 2 \nu_\fp(2) + 1$, then there exists a prime element $\eta \in \O$ with $\Tr(\eta) = 0$.
Moreover for $\rho = 1/2$ we have $\Tr(\rho) = 1$ and $\rho \O = \FP^{1-e}$.
Otherwise $e$ is even and there is a unit $\eta \in \O^\times$ with $\Tr(\eta) = 0$ and $\rho \in E^\times$
with $\Tr(\rho) = 1$ and $\rho \O = \FP^{1-e}$.
The existence of these elements and their valuations follow from the theory of local quadratic dyadic field extensions (see for example~\cite[Section 3]{Kirschmer2019} and take $\rho = (1+\alpha)/2$ using the notation of \cite{Kirschmer2019}).

In either case there exists $\rho \in E^\times$
with $\Tr(\rho) = 1$ and $\rho \O = \FP^{1-e}$ and a skew element $\omega \in E^\times$ with $\Tr(\omega)=0$ and $\nu_\FP(\omega)\equiv e \mod 2$. In what follows we make frequent use of the elements $\rho$ and $\omega$. Note that one of the complications of the dyadic case is that there is no unit of trace one. When setting up a symmetry we have to work with $\rho$ instead.

Let $L$ be a hermitian lattice. Then norm and scale satisfy the following inclusions
\[\scale(L)\FD \subseteq \norm(L)\O \subseteq \scale(L) .\]
We call $L$ \emph{normal} if $\norm(L)\O = \scale(L)$ and \emph{subnormal} otherwise.

We now recall the classification of hermitian lattices in the dyadic case.
Recall that two Jordan splittings $\bigperp_{i=1}^{t} M_i$ and $\bigperp_{j=1}^{t'} N_j$ have the same \emph{Jordan type} if $t = t'$ and for all $1 \leq i \leq t$ we have $\rk M_i = \rk N_i$, $\scale(M_i) = \scale(N_i)$ and $M_i$ is normal if and only if $N_i$ is normal.
For two elements $\alpha, \beta \in K^\times/\Nr(\O^\times)$ with $\alpha \o = \beta\o$ and a non-zero ideal $\mathfrak a$ of $\o$ we write $\alpha/\beta \cong 1 \mod \mathfrak a$ if there exists $\gamma \in \mathfrak a$ such that $\alpha = \beta (1 + \gamma)$ in $K^\times/\Nr(\O^\times)$.

\begin{theorem}[{\cite[Theorem 11.4]{Jacobowitz1962}}]\label{dyadic:classification}
Let $M = \bigperp_{i=1}^t M_i$ and $N = \bigperp_{i=1}^{t'} N_i$ be Jordan
splittings of hermitian $\O$-lattices. Then $M$ and $N$ are isometric if and only if the following four conditions hold:
\begin{enumerate}
  \item
    $M$ and $N$ have the same Jordan type,
  \item
    $\det(M)/{\det(N)} \in \Nr(\O^\times)$,
  \item
    $\mathfrak n_i := \norm(M^{\scale(M_i)}) = \norm(N^{\scale(N_i)})$ for $1 \leq i \leq t$,
  \item
    $\det(M_1 \perp \dotsb \perp M_i)/{\det(N_1 \perp \dotsb \perp N_i)} \cong 1 \mod \o \cap \mathfrak n_i \mathfrak n_{i+1}\scale(M_i)^{-2}$
    for $1 \leq i < t$.
\end{enumerate}
\end{theorem}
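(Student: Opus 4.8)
The statement is Jacobowitz's classification, so the plan is to reconstruct the structure of its proof: verify that the four conditions are necessary by exhibiting each as an isometry invariant, and then prove sufficiency by induction on the number $t$ of Jordan constituents.

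For necessity, the delicate point is condition (1). Two isometric lattices can admit genuinely different Jordan splittings, so I would first show that while a Jordan splitting is not unique, its \emph{type}---the number of constituents together with their scales, ranks and normality---is an invariant of the isometry class. Conditions (2) and (3) are then immediate: the determinant class in $K^\times/\Nr(\O^\times)$ is manifestly preserved by isometries, and the ideals $\mathfrak n_i = \norm(M^{\scale(M_i)})$ are defined intrinsically from $M$ and the scale values, hence depend only on the isometry class. For condition (4) I would show that the partial determinant $\det(M_1 \perp \dotsb \perp M_i)$, read modulo the ideal $\o \cap \mathfrak n_i \mathfrak n_{i+1} \scale(M_i)^{-2}$, is independent of the chosen Jordan splitting of the prescribed type, and is therefore an invariant.

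For sufficiency I would induct on $t$. In the base case $t=1$ the two lattices are modular of a common scale $\FP^{s}$ with equal norm and determinant, and I would classify these directly, treating the normal and subnormal cases separately using the elements $\rho$ and $\omega$ introduced above. For the inductive step I would match the leading constituents $M_1$ and $N_1$: after possibly replacing the given Jordan splittings by others of the same type, conditions (1)--(4) are used to build an isometry carrying $M_1$ onto $N_1$, so that the orthogonal complements again carry Jordan splittings of a common type satisfying the same four conditions with $t$ replaced by $t-1$. The congruence in (4) is exactly the bookkeeping that guarantees the residual determinant on the complement is correct after splitting off the first constituent.

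The main obstacle is sufficiency in the presence of subnormal constituents, where $\norm(L)\O \subsetneq \scale(L)$ and the trace introduces an essential ambiguity. There the determinant is pinned down only up to the congruence ideals of condition (4), and when peeling off constituents one at a time the determinant errors accumulated on the complement must be kept within the tolerance $\mathfrak n_i \mathfrak n_{i+1} \scale(M_i)^{-2}$. Controlling this propagation across the induction, and confirming that $\rho$ and $\omega$ supply enough freedom to realize each matching step, is where the technical weight of the argument concentrates; this is precisely the content of Jacobowitz's analysis, which I would cite rather than reproduce.
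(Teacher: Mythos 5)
This statement is imported verbatim from Jacobowitz (\cite[Theorem 11.4]{Jacobowitz1962}); the paper supplies no proof of its own, only the citation. Your proposal---a plausible outline of necessity-plus-induction that ultimately defers the technical core to Jacobowitz---amounts to the same move the paper makes, so it is consistent with the paper's treatment and requires no further comparison.
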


Recall that for an element $a \in \o$, the \emph{normic defect} of $a$ is defined as the fractional $\o$-ideal
\[ \mathfrak d_E(a) = \bigcap_{\beta \in E}(a - \Nr(\beta))\o. \]
Let now $(1+u_0) \in \o^\times$ be an element with normic defect $\mathfrak{d}_E(1+u_0)=\fp^{e-1}=u_0\o$ such that $\o^\times=\Nr(\O^\times)\cup (1+u_0) \Nr(\O^\times)$ (see~\cite[Section 6]{Johnson1968}). For $i \geq 0$ and $i + e \geq 2k \geq i$ define the following $\FP^i$-modular planes of norm $\fp^k$:
\[H(i)=\begin{pmatrix}
                0 & \pi^i \\ \bar\pi^i &0
               \end{pmatrix}, \qquad
H(i,k)=\begin{pmatrix} p^k & \pi^i \\ \bar \pi^i & 0 \end{pmatrix},\qquad
A(i,k)=\begin{pmatrix} p^k & \pi^i \\ \bar \pi^i & -u_0p^{i-k} \end{pmatrix}. \]
Moreover we have $H(i, k) \cong H(i)$ if and only if $k = \lfloor (i + e)/2 \rfloor$.
For a lattice isometric to $H(i)$, $H(i, k)$ or $A(i, k)$, a tuple $(x, y)$ of elements is
called a \emph{standard basis} if the Gram matrix with respect to $(x, y)$ has
the respective form.  We call any hermitian lattice isomorphic to $H(i)$ a
\emph{hyperbolic plane} of scale $\FP^i$.

\begin{corollary}[{\cite{Jacobowitz1962}, \cite[Corollary 3.3.20]{KirschmerHabil}}]\label{dyadic:modular}
Let $L$ be a $\FP^i$-modular lattice of norm $\fp^k$ and $V=L \otimes E$.
\begin{enumerate}
 \item Suppose $m=2r+1$ is odd. Then $i$ is even and
 \[L \cong \langle u p^{i/2}\rangle \perp H(i)^r \quad \mbox{ where } \quad u \Nr(\O^\times) = \det(L)p^{-mi/2}\Nr(\O^\times).\]
 \item Suppose $m=2r+2$ is even and $V$ is hyperbolic. Then
\[\pi^i \FD \subseteq \fp^k\O \subseteq \pi^i \O \mbox{ and } L \cong H(i,k)\perp H(i)^r . \]
\item Suppose $m=2r+2$ is even and $V$ is not hyperbolic. Then
\[\pi^i \FD \subsetneq \fp^k \O \subseteq \pi^i \O \mbox{ and }
L\cong A(i,k) \perp H(i)^r .\]
\end{enumerate}
\end{corollary}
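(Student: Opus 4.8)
The plan is to deduce the corollary from the classification \Cref{dyadic:classification} by matching invariants. Since $L$ is $\FP^i$-modular, its Jordan splitting consists of the single constituent $L_1 = L$, so $t = 1$ and condition (4) of \Cref{dyadic:classification} is vacuous; moreover $L^{\scale(L)} = L$, so the norm invariant of condition (3) is simply $\norm(L) = \fp^k$. Hence two $\FP^i$-modular lattices are isometric as soon as they have the same rank, the same norm $\fp^k$, and the same determinant in $K^\times/\Nr(\O^\times)$. It therefore suffices, in each case, to exhibit a lattice of the asserted shape that is $\FP^i$-modular of norm $\fp^k$ and matches $\det(L)$; the isometry then follows from \Cref{dyadic:classification}. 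Throughout I record the range forced by the displayed inclusions $\scale(L)\FD \subseteq \norm(L)\O \subseteq \scale(L)$, which read $\FP^{i+e} \subseteq \FP^{2k} \subseteq \FP^i$ and hence give $i \le 2k \le i + e$; this is precisely the range in which $H(i,k)$ and $A(i,k)$ are defined.

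For the odd case $m = 2r+1$ I would first show $i$ is even. The determinant of $L$ is the determinant of a hermitian Gram matrix, hence lies in $K^\times$, and for a rank-$m$ $\FP^i$-modular lattice it has $\fp$-valuation $im/2$; integrality forces $im$, and therefore $i$, to be even. Next I would show $k = i/2$. Rescaling by $p^{-i/2}$ we may assume $L$ is unimodular; if $L$ were subnormal then $\norm(L) \subseteq \fp$, so all diagonal Gram entries lie in $\fp$. Reducing a Gram matrix modulo $\fp$, and using that in the ramified case the conjugation acts trivially on the residue field $\o/\fp$, we obtain a symmetric matrix with zero diagonal over a field of characteristic two, i.e.\ an alternating matrix; such a matrix of odd size $m$ is singular, contradicting unimodularity. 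Thus $k = i/2$, and $\langle u p^{i/2}\rangle \perp H(i)^r$ is $\FP^i$-modular of norm $\fp^{\min(i/2,\,\floor{(i+e)/2})} = \fp^{i/2} = \fp^k$ (the $H(i)$-summands do not lower the norm because $i/2 \le \floor{(i+e)/2}$). Choosing $u$ with $u\Nr(\O^\times) = \det(L) p^{-mi/2}\Nr(\O^\times)$ matches the determinant, and \Cref{dyadic:classification} gives the isometry.

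For the even case $m = 2r+2$ I would work with the candidates $H(i,k)\perp H(i)^r$ and $A(i,k)\perp H(i)^r$. Their determinants are $(-1)^{r+1}p^{i(r+1)}$ and $(1+u_0)(-1)^{r+1}p^{i(r+1)}$, which differ by the nonnorm unit $1+u_0$; since $\o^\times = \Nr(\O^\times) \cup (1+u_0)\Nr(\O^\times)$, they lie in the two distinct determinant classes and exactly one matches $\det(L)$. As $1+u_0$ is also a nonnorm for $E^\times$, this same class records whether $V$ is hyperbolic: $H(i,k)\perp H(i)^r$ is hyperbolic and $A(i,k)\perp H(i)^r$ is not, so the hyperbolic case forces the $H$-candidate and the non-hyperbolic case the $A$-candidate. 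The inclusions then follow from the norm computation. The plane $H(i,k)$ has norm $\fp^k$ throughout the range $2k \le i+e$, giving $\pi^i\FD \subseteq \fp^k\O \subseteq \pi^i\O$; for $A(i,k)$ the second diagonal entry $-u_0 p^{i-k}$ has $\fp$-valuation $(e-1)+(i-k)$ (since $u_0\o = \fp^{e-1}$), which is $\ge k$ precisely when $2k < i+e$, so $A(i,k)$ is a norm-$\fp^k$ plane only under the strict inclusion $\pi^i\FD \subsetneq \fp^k\O$. Appending $H(i)$-summands preserves both norm and hyperbolicity type, and \Cref{dyadic:classification} concludes.

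I expect the main obstacle to be the normality dichotomy in the odd case together with the bookkeeping guaranteeing the candidates have norm exactly $\fp^k$ rather than a strictly smaller ideal; the key point is that appending hyperbolic planes $H(i)$ changes neither scale nor norm, which is where $2k \le i+e$ is used, and that the valuation $e-1$ of $u_0$ is what forces the strict inclusion in the non-hyperbolic case. The residue-field argument ruling out odd-rank subnormal unimodular lattices is the crux, relying on the residue field having characteristic two with trivial conjugation. The remaining determinant matching is routine once one knows that the single nontrivial class of $\o^\times/\Nr(\O^\times)$, represented by $1+u_0$, simultaneously separates $H(i,k)$ from $A(i,k)$ and detects the hyperbolicity of $V$.
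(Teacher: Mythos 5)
Your reduction of \Cref{dyadic:classification} to the statement that two $\FP^i$-modular lattices of equal rank are isometric if and only if they have equal norm and equal determinant class is correct: condition (1) follows from equality of rank, scale and norm, condition (3) collapses to $\norm(M)=\norm(N)$ since $M^{\scale(M)}=M$, and (4) is vacuous for $t=1$. Your odd-rank case is sound and rather elegant: the valuation of $\det(L)$ forces $i$ even, and the residue argument (conjugation trivial on $\O/\FP$, so a subnormal Gram matrix reduces to a symmetric matrix with zero diagonal, i.e.\ an alternating matrix in characteristic two, which is singular in odd size) correctly rules out odd-rank subnormal modular lattices; this is exactly the crux you identified. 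One blemish: $\det\bigl(\langle up^{i/2}\rangle\perp H(i)^r\bigr)=(-1)^r u\,p^{mi/2}$, so your choice of $u$ matches $\det(L)$ only up to the factor $(-1)^r$, which need not lie in $\Nr(\O^\times)$ (e.g.\ for $\QQ_2(\sqrt{-1})/\QQ_2$); this sign is inherited from the statement as printed, but a careful write-up must track it. Note also that the paper itself does not prove this corollary; it cites Jacobowitz and Kirschmer, whose proofs proceed by splitting modular lattices into rank-one and rank-two constituents, so your attempt to shortcut that machinery via \Cref{dyadic:classification} is a genuinely different route.

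The genuine gap is in case (3). The strict inclusion $\pi^i\FD\subsetneq\fp^k\O$, i.e.\ $2k<i+e$, is part of the \emph{conclusion} of the corollary, but your argument presupposes it: as you yourself observe, $A(i,k)$ is a plane of norm $\fp^k$ only when $2k<i+e$. Hence if $V$ is non-hyperbolic and, hypothetically, $2k=i+e$, then there is no candidate lattice of norm $\fp^k$ in the non-norm determinant class at all, and your invariant-matching method produces neither the asserted isometry nor a contradiction; writing ``the inclusions then follow from the norm computation'' conflates ``the candidate exists only when $2k<i+e$'' with ``therefore $2k<i+e$''. What is missing is the implication: if $\norm(L)\O=\scale(L)\FD$ (equivalently $2k=i+e$), then $\det(L)\in(-1)^{r+1}p^{i(r+1)}\Nr(\O^\times)$, so that $V$ is hyperbolic. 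This cannot be extracted from \Cref{dyadic:classification} alone, because that theorem is a pure uniqueness statement and places no constraint on which invariant tuples actually occur. Closing the gap requires structural input of the kind the cited sources use: for instance, when $2k=i+e$ one has $\norm(L)=\Tr(\scale(L))$, the Gram matrix reduces modulo $\FP$ to a nonsingular alternating matrix, and lifting a symplectic basis splits $L$ into $\FP^i$-modular planes of norm $\fp^k$; each such plane has determinant $ab-\Nr(c)=-\Nr(c)\bigl(1-ab\Nr(c)^{-1}\bigr)$ with $ab\Nr(c)^{-1}\in\fp^{2k-i}=\fp^e$, hence determinant in $-p^i\Nr(\O^\times)$ by the norm criterion \cite[XV, \S{}2, Cor.\ 2 to Th.\ 1]{Serre1979} already used in the paper, and taking the product over the planes puts $\det(L)$ in the hyperbolic class. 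Without an argument of this type, case (3) of your proof is incomplete.
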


We will also make use of the following two basic results about the norm map
modulo prime ideal powers.

\begin{lemma}\label{dyadic:norm-mod}
 Let $a \in \o^\times$. Then there exists $\epsilon \in \O^\times$ with $a \equiv \epsilon \bar \epsilon \mod \fp^{e-1}$.
\end{lemma}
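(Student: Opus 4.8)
The plan is to exploit the index-two description $\o^\times = \Nr(\O^\times) \cup (1+u_0)\Nr(\O^\times)$ recorded just above, together with the fact that the distinguished representative $1+u_0$ of the nontrivial norm class satisfies $u_0\o = \fp^{e-1}$. The observation driving the whole argument is that, modulo $\fp^{e-1}$, the element $1+u_0$ is indistinguishable from the norm $1 = \Nr(1)$; hence both cosets of $\Nr(\O^\times)$ collapse onto norms once we reduce modulo $\fp^{e-1}$, which is exactly what the lemma asserts.

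Concretely, I would split into the two cases dictated by this coset decomposition. If $a \in \Nr(\O^\times)$, then by definition $a = \Nr(\epsilon)$ for some $\epsilon \in \O^\times$ and there is nothing to prove, since even equality holds. If instead $a \in (1+u_0)\Nr(\O^\times)$, write $a = (1+u_0)\Nr(\delta)$ with $\delta \in \O^\times$, so that $\Nr(\delta) \in \o^\times$. Then $a - \Nr(\delta) = u_0\Nr(\delta)$, and since $\Nr(\delta)$ is a unit this gives $(a - \Nr(\delta))\o = u_0\o = \fp^{e-1}$. In particular $a \equiv \Nr(\delta) \bmod \fp^{e-1}$, so $\epsilon = \delta$ does the job.

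There is essentially no serious obstacle here: the arithmetic content—that $\Nr(\O^\times)$ has index two in $\o^\times$ and that the nontrivial class is represented by an element whose normic defect equals $\fp^{e-1}$—is imported wholesale from the theory of the norm group of a quadratic dyadic extension (as cited before the statement), and the lemma then reduces to the one-line computation above. The only point requiring a moment's care is that $\epsilon$ must be a \emph{unit} of $\O$, not merely an element of $E$; this is automatic because the coset representatives are drawn from $\Nr(\O^\times)$, so the $\delta$ (equivalently the $\epsilon$ produced in either case) lies in $\O^\times$.
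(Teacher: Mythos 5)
Your proof is correct and follows essentially the same route as the paper's: both split $\o^\times$ into the two cosets $\Nr(\O^\times)$ and $(1+u_0)\Nr(\O^\times)$ and then use $u_0\o = \fp^{e-1}$ to see that the nontrivial coset representative is congruent to a norm modulo $\fp^{e-1}$. The paper's proof is the same one-line computation, written as $a = (1+u_0)\epsilon\bar\epsilon \equiv \epsilon\bar\epsilon \bmod \fp^{e-1}$.
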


\begin{proof}
  We have $\o^\times = \Nr(\O^\times) \cup (1+u_0)\Nr(\O^\times)$.
  If $a \in \Nr(\O^\times)$ is a norm, the statement is true.
  Otherwise there exists $\epsilon \in \O^\times$ with $a = (1 + u_0)\epsilon\bar\epsilon$ and $u_0 \in \fp^{e-1}$ yields
\[a =(1+u_0) \epsilon \bar \epsilon\equiv \epsilon \bar \epsilon \mod \fp^{e-1}. \qedhere \]
\end{proof}

\begin{lemma}\label{dyadic:goingdown}
  Let $\alpha \in \O$. Then $\alpha \equiv 1 \mod \FP$ if and only if $\alpha \bar \alpha \equiv 1 \mod \fp$.
\end{lemma}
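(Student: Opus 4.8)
The plan is to prove both implications by reducing modulo $\FP$ and exploiting that $E/K$ is ramified and dyadic. Before starting, I would record two structural facts. First, a prime element $p$ of $\o$ is a non-unit of $\O$, so $\fp = p\o \subseteq \FP$; since moreover $1 \notin \FP$, the intersection $\FP \cap \o$ is a proper ideal of $\o$ containing $\fp$, hence equals $\fp$ by maximality. This lets me descend congruences from $\FP$ to $\fp$ whenever both sides lie in $\o$. Second, because $E/K$ is ramified, the residue extension is trivial, so reduction identifies $\O/\FP$ with $\o/\fp$, a field of characteristic $2$ (as $K$ is dyadic). Since $\FP$ is stable under the involution and $\overline{\phantom{x}}$ fixes $\o$ pointwise, the induced involution on $\O/\FP$ is the identity; equivalently $\bar\alpha \equiv \alpha \mod \FP$ for every $\alpha \in \O$.

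For the forward implication I would argue as follows. Suppose $\alpha \equiv 1 \mod \FP$. Applying the involution and using that $\FP$ is invariant gives $\bar\alpha \equiv 1 \mod \FP$, hence $\alpha\bar\alpha \equiv 1 \mod \FP$. Since $\alpha\bar\alpha = \Nr(\alpha) \in \o$ and $1 \in \o$, the first structural fact yields $\alpha\bar\alpha \equiv 1 \mod \fp$.

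For the reverse implication, suppose $\alpha\bar\alpha \equiv 1 \mod \fp$. As $\fp \subseteq \FP$, this congruence also holds modulo $\FP$, and combining it with $\bar\alpha \equiv \alpha \mod \FP$ gives $\alpha^2 \equiv \alpha\bar\alpha \equiv 1 \mod \FP$. In characteristic $2$ one has $\alpha^2 - 1 = (\alpha-1)^2$, so $(\alpha-1)^2 \equiv 0 \mod \FP$. Since $\O/\FP$ is a field, hence reduced, this forces $\alpha - 1 \equiv 0 \mod \FP$, that is, $\alpha \equiv 1 \mod \FP$.

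I do not expect a genuine obstacle here: the entire content sits in the two structural observations of the first paragraph, after which both directions are immediate. The only points requiring care are the triviality of the induced involution on the residue field, which is exactly where the hypothesis that $E/K$ is ramified enters, and the identity $\alpha^2 - 1 = (\alpha-1)^2$ driving the reverse direction, which is where the dyadic (characteristic $2$) hypothesis is used.
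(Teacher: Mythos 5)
Your proof is correct and follows essentially the same route as the paper's: both rest on the involution inducing the identity on $\O/\FP$ (from ramification), the resulting congruence $\alpha^2 \equiv \alpha\bar\alpha \mod \FP$ together with characteristic $2$ for the reverse direction, and $\FP \cap \o = \fp$ for the forward direction. You merely spell out the structural facts that the paper leaves implicit.
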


\begin{proof}
Since $E/K$ is quadratic and ramified, the involution induces the identity on
$\O/\FP$.  Hence $\alpha^2 \equiv \alpha \bar \alpha \mod \FP$.
  If $\alpha \bar \alpha \equiv 1 \mod \fp$, then $\alpha^2 -1\equiv 0 \mod \FP$; hence $\alpha \equiv 1 \mod \FP$.
  If $\alpha \equiv 1 \mod \FP$, this gives $\alpha \bar \alpha \equiv 1 \pmod{ \FP \cap \o = \fp}$.
\end{proof}

\subsection{Hyperbolic planes}
In this section, we prove that symmetries and rescaled Eichler isometries act transitively on the collection of hyperbolic planes of a given scale splitting a hermitian lattice. If $K/\QQ_2$ is unramified, then this is a result of Hayakawa \cite{Hayakawa1968}. In the following we show that if $K/\QQ_2$ is ramified, the same ideas still work with only minor modifications.

\begin{lemma}\label{dyadic:hyperbolic-plane}
  The unitary group of a hyperbolic plane $L = H(i)$ is generated by symmetries.
\end{lemma}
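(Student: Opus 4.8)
I want to show that every element of $U(L)$, where $L = H(i)$ is a hyperbolic plane of scale $\FP^i$, is a product of symmetries. Since rescaling the form by $p^{-i}$ leaves $U(L)$ unchanged (as noted, $U(\prescript{a}{}{L}) = U(L)$) and replaces $H(i)$ by $H(0)$, I may assume $i = 0$, so $L = \O u \oplus \O v$ with $\sprod u v = 1$, $\sprodq u u = \sprodq v v = 0$, and $\scale(L) = \O$.

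**The strategy.** The plan is to reduce any $\varphi \in U(L)$ to the identity by left-multiplying with symmetries, exactly as in the general-position argument of Proposition~\ref{prop:hyperbolic-refl}, but now worked out explicitly for the rank-two lattice where $P^\perp = 0$. First I would show that symmetries act transitively on the primitive isotropic vectors generating the scale: given $\varphi$, the image $\varphi(u)$ is again isotropic with $\sprod{\varphi(u)}{L} = \scale(L) = \O$, so by Proposition~\ref{prop:isotropic-symmetries} there is a product of symmetries $S$ with $S(\varphi(u)) = u$. After replacing $\varphi$ by $S\varphi$, I may assume $\varphi(u) = u$.

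**Finishing with the stabilizer of $u$.** Now $\varphi$ fixes $u$ and preserves the form, so $\varphi(v) = \mu u + \lambda v + y$ with $y \in P^\perp = 0$, hence $\varphi(v) = \mu u + \lambda v$. From $1 = \sprod{u}{\varphi(v)} = \lambda$ I get $\lambda = 1$, and from $0 = \sprodq{\varphi(v)}{\varphi(v)} = \Tr(\mu)$ I get $\Tr(\mu) = 0$. Thus $\varphi(v) = \mu u + v$ with $\mu$ a skew element, and $\varphi$ is the rescaled Eichler isometry $E_y^\mu$ with $y = 0$. By Lemma~\ref{lem:rotation-reflection} (split case) and Lemma~\ref{lem:rotation-reflection-inert} (inert case), every such rescaled Eichler isometry is a product of two symmetries and an element fixing $P = L$ pointwise; the latter is the identity here. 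Hence $\varphi$ is itself a product of symmetries, and combining with $S$ shows the original $\varphi \in S(L)$.

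**The main obstacle.** The delicate point is that Lemmas~\ref{lem:rotation-reflection} and~\ref{lem:rotation-reflection-inert} were proved for the split and inert cases, whereas here $E/K$ is ramified dyadic, so I cannot invoke them directly. The crux is therefore to factor the specific Eichler isometry $x \mapsto x + \frac{\sprod x u}{\sprod v u}\bigl(\mu - \tfrac{\sprod x y}{\sprod u v}\bigr)u$ (with $y = 0$) into two symmetries. The natural approach is to produce an explicit symmetry $S_{s,\sigma}$ with $S_{s,\sigma}(v)$ in the right position: since $v - E^\mu_y(v) = -\mu u$ generates $\mu\O = \mu\scale(L)$, one checks when $\mu$ is a unit that Lemma~\ref{sym:act} directly yields a symmetry taking $v$ to $E^\mu_y(v)$ fixing $u$, and when $\mu$ is non-invertible one composes with an auxiliary symmetry $S_{u,\omega}$ built from the skew element $\omega$ (available by the discussion opening this section) to shift $\mu$ into a unit, mirroring the case analysis of Lemma~\ref{lem:rotation-reflection}. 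I expect the genuine work to be in handling the residue-field-size-two subtlety for the non-unit $\mu$, exactly the phenomenon that makes the ramified dyadic case with $|\o/\fp| = 2$ exceptional in the main theorem.
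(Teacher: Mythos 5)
Your reduction has a genuine gap at its first and most important step: you invoke \Cref{prop:isotropic-symmetries} to move $\varphi(u)$ back to $u$, but that proposition belongs to \Cref{sec:split}, whose standing hypothesis is that $E/K$ is split or inert. Its proof rests on \Cref{lem:inert-1} (inert) and \Cref{lem:isotropic-y} (split), which use tools---a unit $\rho$ with $\Tr(\rho)=1$, respectively the idempotents of $K\times K$---that do not exist when $E/K$ is ramified dyadic; the paper explicitly stresses that the absence of a unit of trace one is the central complication of this case. Since \Cref{dyadic:hyperbolic-plane} sits in \Cref{sec:dyadic}, the citation is invalid. You correctly flagged this problem for \Cref{lem:rotation-reflection,lem:rotation-reflection-inert}, but the same objection hits the transitivity step, and that step is where the whole difficulty of the lemma is concentrated: writing $\varphi(u)=\alpha u+\beta v$, the criterion of \Cref{sym:act} produces a symmetry taking $u$ to $\varphi(u)$ only when $\beta\in\O^\times$; when $\beta\in\FP$ one needs either an auxiliary isotropic vector $y$ with $\sprod{u}{y}\O=\sprod{\varphi(u)}{y}\O=\scale(L)$ (whose construction in the ramified dyadic case requires a skew element of valuation at least $i$, e.g.\ $y=\omega\pi^{-i}u+v$---an argument you never supply), or a completely different device. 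The paper chooses the latter: it handles $\beta\in\O^\times$ or $\gamma\in\O^\times$ directly, observes that the remaining isometries form a subgroup $H$ of $U(L)$, and kills $H$ by composing with the explicit isometry $u\mapsto v$, $v\mapsto\bar\pi^i\pi^{-i}u$, which lies in $S(L)\setminus H$, so that $\varphi\circ h\notin H$ forces $h\in S(L)$. As written, your proof does not close this case.

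Two further inaccuracies. First, the normalization $i=0$ is not available: since $E/K$ is ramified, $\fp\O=\FP^2$, so rescaling by $a\in K^\times$ changes the scale by $a\O=\FP^{2\nu_\fp(a)}$; thus $H(i)$ with $i$ odd can never be rescaled to $H(0)$ (one can only reach $i\in\{0,1\}$), and in any case rescaling by $p^{-i}$ produces scale $\FP^{-i}$, not $\O$. The parity of $i$ relative to $e$ is not cosmetic, because it governs which skew elements $\sigma$ give integral symmetries $S_{u,\sigma}$. Second, your case distinction on $\mu$ in the stabilizer step is unnecessary: with $y=0$, for \emph{any} $\mu\neq 0$ the map $u\mapsto u$, $v\mapsto v+\mu u$ is the single symmetry $S_{u,\sigma}$ with $\sigma=-\mu^{-1}\sprod{v}{u}$, which is skew because $\Tr(\mu\sprod{u}{v})=0$, and which preserves $L$ because $\sprod{L}{u}\sigma^{-1}=\mu\O\subseteq\O$; this is exactly what the paper's proof does, and no residue-field-size or unit-versus-non-unit issue arises in rank two.
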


\begin{proof}
  Let $(u, v)$ be a standard basis of $L=H(i)$. Consider $\varphi \in U(L)$ and write $\varphi(u) = \alpha u + \beta v$, $\varphi(v) = \gamma u + \delta v$  with $\alpha, \beta, \gamma, \delta \in \O$.
  If $\beta \in \O^\times$ is a unit, then
  \[ \sprod u {u - \varphi(u)}\O = \sprod{u}{\beta v} \O = \sprod u v \O = \scale(L). \]
  Hence by \Cref{sym:act} there exists a symmetry $S \in U(L)$ with $S(u) = \varphi(u)$.

  Now consider $\varphi' = S^{-1} \circ \varphi$, which satisfies $\varphi'(u) = u$. Write $\varphi'(v) = \gamma' u + \delta' v$.
  Then
  \[ \delta' \sprod v u = \sprod{\varphi'(v)} u = \sprod{\varphi'(v)}{\varphi'(u)} =  \sprod v u, \]
  and hence $\delta' = 1$. Now
  \[ 0 = \sprodq{\varphi'(v)}{\varphi'(v)} = \Tr(\gamma' \sprod u v) \]
  and therefore $\Tr(\overline \gamma' \sprod v u) = 0 = \Tr(\gamma'^{-1} \sprod v u)$.
  Now setting $\sigma = -\gamma'^{-1}\sprod v u$ we have $\Tr(\sigma) = 0 =\sprodq u u$, $\sigma^{-1}\sprod{L}{u} \O= \gamma'\O \subseteq \O$
  as well as $S_{u, \sigma} \in S(L)$ and
  \[ S_{u, \sigma}(v) = v + \sprod v u \sprod v u ^{-1} \gamma' u = \varphi'(v), \quad S_{u, \sigma}(u) = u. \]
  This shows that $\varphi \in S(L)$. By swapping $u$ and $v$ the same holds when $\gamma \in \O^\times$.

  Let $H$ be the set of $\varphi \in U(L)$ where $\beta, \gamma \in \FP$. Then $H$ is a subgroup of $U(L)$ and the above argument shows that $U(L) \setminus H \subseteq S(L)$.
  It is sufficient to show that $H \subseteq S(L)$.
  To this end consider the map $\varphi$ defined by $\varphi(u) = v$ and $\varphi(v) = \bar \pi^i \pi^{-i} u$, which satisfies $\varphi \in U(L) \setminus H$.
  Now let $h \in H$. Then $\varphi \circ h \not\in H$ and hence $\varphi \circ h \in S(L)$.
  It follows that $h \in S(L)$.
\end{proof}

\begin{lemma}\label{dyadic:rot1}
  Let $(v,u_1), (v,u_2)$ be hyperbolic pairs splitting the hermitian lattice $L$ with $\sprod{v} {u_1} = \sprod{v}{u_2}$.
  Then there exists a rescaled Eichler isometry $\varphi \in X(L)$ such that $\varphi(u_1) = u_2$ and $\varphi(v) = v$.
\end{lemma}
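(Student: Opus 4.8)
The plan is to exhibit $\varphi$ as a single rescaled Eichler isometry attached to the hyperbolic pair $(v, u_1)$, with $v$ playing the role of the fixed isotropic vector (the vector called $u$ in the definition). Write $P = \O v \oplus \O u_1$ and $c = \sprod{v}{u_1}$, so that $L = P \perp P^\perp$ and, since conjugation preserves $\FP$-valuations, $\bar c\O = c\O$. Since $u_2 \in L$, I would decompose
\[ u_2 = \alpha v + \beta u_1 + w, \qquad \alpha, \beta \in \O, \quad w \in P^\perp, \]
the coefficients lying in $\O$ because $P$ is a free direct summand with basis $v, u_1$. Pairing with $v$ and using that $v, u_1$ are isotropic and $w \in P^\perp$ gives $\sprod{v}{u_2} = \bar\beta c$; the hypothesis $\sprod{v}{u_2} = \sprod{v}{u_1} = c$ then forces $\beta = 1$.

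It remains to verify the three defining conditions of a rescaled Eichler isometry for the data $w$ and $\alpha$. The membership $w \in P^\perp$ and $\alpha \in \O$ are already in hand. The scale bound $\sprod{L}{w} \subseteq c\O$ is where both hyperbolic pairs must be used at once: for $x \in L$ I would expand
\[ \sprod{x}{w} = \sprod{x}{u_2} - \sprod{x}{u_1} - \bar\alpha\,\sprod{x}{v}. \]
The terms $\sprod{x}{u_1}$ and $\sprod{x}{v}$ lie in $c\O$ because $(v, u_1)$ splits $L$ (using $\bar c\O = c\O$ for the second term), while $\sprod{x}{u_2}$ lies in $c\O$ because $(v, u_2)$ splits $L$ with $\sprod{v}{u_2} = c$; hence $\sprod{x}{w} \in c\O$. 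Finally, expanding $\sprodq{u_2}{u_2} = 0$ with $\beta = 1$ and $w$ orthogonal to $v$ and $u_1$ collapses to $\sprodq{w}{w} + \Tr(\alpha c) = 0$, which is exactly the compatibility condition $\Tr(\alpha\sprod{v}{u_1}) = -\sprodq{w}{w}$.

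With these verifications, $E_w^\alpha \in X(L)$ is well-defined, fixes $v$, and sends $u_1 \mapsto u_1 + w + \alpha v = u_2$, so $\varphi = E_w^\alpha$ is the required isometry. I expect the scale bound $\sprod{L}{w} \subseteq c\O$ to be the only genuinely delicate point: it cannot be obtained from the splitting by $(v, u_1)$ alone, since that gives no control over $\sprod{x}{u_2}$, and it is precisely the hypothesis that $(v, u_2)$ splits $L$ as well that supplies the missing bound.
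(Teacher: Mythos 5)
Your proof is correct and follows essentially the same route as the paper: decompose $u_2$ over the splitting $L = (\O v \oplus \O u_1) \perp P^\perp$, show the $u_1$-coefficient is $1$, read off the trace condition from $\sprodq{u_2}{u_2}=0$, and use the fact that $(v,u_2)$ also splits $L$ to get the scale bound $\sprod{L}{w} \subseteq \sprod{v}{u_1}\O$. The only cosmetic difference is that the paper obtains this bound via $\sprod{w}{L}=\sprod{u_2}{M}\subseteq\sprod{u_2}{L}=\sprod{v}{u_2}\O$ rather than by your term-by-term expansion of $w = u_2 - u_1 - \alpha v$; both hinge on the same point you correctly identify as the crux.
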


\begin{proof} 
  Write $L = (\O {u_1} \oplus \O v) \perp M$ for some sublattice $M$ of $L$. We can write
${u_2} = \alpha {u_1} + \beta v + w$ with $\alpha, \beta \in \O$, $w \in M$. Note that $\sprod{w}{L}=\sprod{w}{M}= \sprod{u_2}{M} \subseteq \sprod{u_2}{L} = \sprod{v}{u_2}\O=\sprod{v}{u_1}\O$.
  Then $\sprod{u_2} v = \alpha \sprod{u_1}{v}$ and hence $\alpha = 1$.
  Now
  \[ 0 = \sprodq{u_2} {u_2} = \Tr(\beta \sprod v {u_1}) + \sprodq w w.\]
  Hence $\Tr(\beta \sprod {v}{u_1}) = -\sprodq w w$ and the rescaled Eichler isometry $\varphi = E_{w}^{\beta}$ with respect to $(v,u_1)$
  satisfies $\varphi({u_1}) = {u_1} + \beta v + w = {u_2}$ and $\varphi(v) = v$.
\end{proof}

\begin{lemma}\label{dyadic:rot2}
  Let $({u_1}, v_1)$, $(u_2, v_2)$ be hyperbolic pairs splitting a hermitian lattice $L$ with $\sprod{u_1}{v_1} = \sprod{u_2}{v_2}$.
  Then there exists $\varphi \in X(L)$ and $\varepsilon \in \O^\times$ such that $\varphi({u_1}) = \varepsilon u_2$
  or $\varphi(v_1) = \varepsilon v_2$.
\end{lemma}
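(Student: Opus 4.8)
The plan is to reduce the statement to a single reflection principle together with a case analysis on the mutual pairings of the two planes. Write $s=\sprod{u_1}{v_1}=\sprod{u_2}{v_2}$, so that both $P_j=\O u_j\oplus\O v_j$ are $\FP^i$-modular with $s\O=\FP^i$; since each pair splits $L$, one has $\sprod{u_j}{L}=\sprod{v_j}{L}=s\O$ for $j\in\{1,2\}$. The reflection principle I would record first is this: if $x,x'\in L$ are isotropic with $\sprod{x}{L}=\sprod{x'}{L}=s\O$ and $\sprod{x}{x'}\O=s\O$, then the symmetry $S_{x-x',\sigma}$ with $\sigma=\sprod{x}{x-x'}=-\sprod{x}{x'}$ lies in $S(L)$ and sends $x$ to $x'$. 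Indeed $\sprodq{x}{x}=\sprodq{x'}{x'}=0$ and $\sprod{x}{x-x'}=-\sprod{x}{x'}\neq 0$, while $\sigma\O=s\O$ and $\sprod{L}{x-x'}\subseteq\sprod{L}{x}+\sprod{L}{x'}=s\O$ give the integrality condition $\sprod{L}{x-x'}\sprod{x}{x-x'}^{-1}(x-x')\subseteq\O(x-x')\subseteq L$ of \Cref{sym:act}.

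Next I would dispose of the nondegenerate cases. Expanding $u_2=au_1+bv_1+w$ and $v_2=a'u_1+b'v_1+w'$ along $L=P_1\perp P_1^\perp$, a direct computation gives $\sprod{u_1}{u_2}\O=\bar b s\O$ and $\sprod{v_1}{v_2}\O=\bar a' s\O$, so that $\sprod{u_1}{u_2}\O=s\O$ iff $b\in\O^\times$ and $\sprod{v_1}{v_2}\O=s\O$ iff $a'\in\O^\times$. If $b$ is a unit the reflection principle yields a symmetry with $u_1\mapsto u_2$, and if $a'$ is a unit one with $v_1\mapsto v_2$; in either case we are done with $\varepsilon=1$.

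There remains the degenerate case $b,a'\in\FP$. The idea is to first move the second pair by a rescaled Eichler isometry so as to manufacture a unit pairing and then apply the reflection principle; as this isometry becomes part of the final $\varphi\in X(L)$, it does not affect the conclusion. Concretely, for $\mu\in\O$ with $\mu\bar s$ skew the shear $u_2\mapsto u_2+\mu v_2$ preserves isotropy and the value $\sprod{u_2+\mu v_2}{v_2}=s$, and is the rescaled Eichler isometry with respect to $(v_2,u_2)$ and $y=0$; it replaces $\sprod{u_1}{u_2}$ by $\sprod{u_1}{u_2}+\bar\mu\sprod{u_1}{v_2}$. Hence if $\sprod{u_1}{v_2}$ generates $s\O$ and a skew element of valuation $i$ is available—that is, if $i\equiv e\pmod 2$, after scaling $\omega$ by $K^\times$—one can take $\mu\in\O^\times$ and reduce to the previous paragraph; the analogous shears on $v_2$ handle the companion subcases. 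When no pairing between $P_1$ and $P_2$ is a unit, or when the parity $i\not\equiv e\pmod 2$ blocks such a shear, I would instead route through an auxiliary isotropic vector: choose $y_0\in L$ with $\sprod{u_1}{y_0}\O=\sprod{u_2}{y_0}\O=s\O$ by the elementary combination argument of \Cref{lem:inert-1}, correct it to an isotropic $y=y_0-\sprodq{y_0}{y_0}\rho\sprod{u_1}{y_0}^{-1}u_1$ using $\Tr(\rho)=1$, and apply the reflection principle twice to obtain $u_1\mapsto y\mapsto u_2$. The unit $\varepsilon$ and the alternative conclusion $\varphi(v_1)=\varepsilon v_2$ are there to absorb the cases in which this vector, or the available shear, can only be controlled up to scaling or forces an alignment of the $v$-vectors.

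The main obstacle is precisely this last step. In the unramified situation the correction vector is automatically integral since $\rho\in\O^\times$, but in the ramified dyadic case $\rho\O=\FP^{1-e}$ with $e\ge 1$, so $c=\sprodq{y_0}{y_0}\rho\sprod{u_1}{y_0}^{-1}$ lies in $\O$ only when $\nu_\FP(\sprodq{y_0}{y_0})\ge i+e-1$. Securing enough divisibility of $\sprodq{y_0}{y_0}$—equivalently, controlling the norm of the auxiliary vector against the different $\FD=\FP^e$ while keeping its pairings with $u_1$ and $u_2$ units—is the crux. I expect this to require the explicit description of $P_1^\perp$ from \Cref{dyadic:modular} together with an argument by the parity of $i-e$, mirroring Hayakawa's computation with the modifications needed when $K/\QQ_2$ is ramified.
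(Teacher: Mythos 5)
Your reduction of the generic cases is sound and agrees with the paper: the reflection principle is a correct specialization of \Cref{sym:act}, and it settles the cases where $\sprod{u_1}{u_2}$ or $\sprod{v_1}{v_2}$ generates $s\O$ (your $b\in\O^\times$ or $a'\in\O^\times$) exactly as the paper does. The trouble starts with the remaining cases, which is where the lemma actually lives. Your shear $u_2\mapsto u_2+\mu v_2$ requires $\mu\bar s$ to be skew, and since every nonzero skew element of $E$ has valuation congruent to $e$ modulo $2$, a unit $\mu$ exists only when $i\equiv e\pmod 2$; so the subcases where only a cross pairing $\sprod{u_1}{v_2}$ or $\sprod{v_1}{u_2}$ is a unit multiple of $s$ (your $b'$ or $a$ a unit) remain open for half the parities. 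The paper needs no shear and no parity hypothesis here: if $u_2=au_1+bv_1+w$ with $a\in\O^\times$, then $(v_1,a^{-1}u_2)$ is a hyperbolic pair splitting $L$ with $\sprod{a^{-1}u_2}{v_1}=\sprod{u_1}{v_1}$, and \Cref{dyadic:rot1} (a rescaled Eichler isometry with nonzero component in $P_1^\perp$) maps $u_1$ to $a^{-1}u_2$ while fixing $v_1$; this is precisely where the unit $\varepsilon$ in the statement comes from.

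The more serious gap is the case where all four pairings lie in $\FP s$, i.e.\ $a,b,a',b'\in\FP$. Your fallback is the unramified argument of \Cref{lem:inert-1}, and, as you yourself observe, it breaks down because $\rho\O=\FP^{1-e}$ makes the correcting multiple of $u_1$ non-integral; declaring that you ``expect'' this to be repairable is not a proof, and in fact the paper does nothing of this kind. Its idea is different: the projections $w=u_2-au_1-bv_1$ and $w'=v_2-a'u_1-b'v_1$ into $M=P_1^\perp$ are primitive, satisfy $\sprodq{w}{w},\sprodq{w'}{w'}\in\fp\Tr(\FP^i)$ and $\sprod{w}{w'}\equiv s \bmod \FP^2 s$, so the plane $W=\O w\oplus\O w'$ has $\scale(W)=\FP^i$, $\norm(W)=\Tr(\FP^i)$ and $\det(W)\equiv-\Nr(\sprod{w}{w'})\bmod \fp^{i+e}$; by the local norm criterion and \Cref{dyadic:modular} one gets $W\cong H(i)$, and $W$ splits $L$. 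Choosing an isotropic $z\in W$ with $\sprod{w}{z}=s$, both $(v_1+z,\,u_1)$ and $(v_1+z,\,(a+1)^{-1}u_2)$ are hyperbolic pairs splitting $L$ with equal pairing, and \Cref{dyadic:rot1} carries $u_1$ to $(a+1)^{-1}u_2$. This construction of a hyperbolic plane out of the projections $w,w'$ --- entirely absent from your proposal --- is the missing idea; it is what replaces the trace-one-unit trick that dyadic ramification forbids.
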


\begin{proof}
  Write $L = (\O {u_1} \oplus \O v_1) \perp M$ and $u_2 = \alpha {u_1} + \beta v_1 + w$, $v_2 = \gamma {u_1} + \delta v_1 + w'$ with $\alpha,\beta,\gamma,\delta \in \O$ and $w, w' \in M$.
  If $\beta \in \O^\times$ then
  \[\sprod{u_1} {u_1 - u_2}\O = \sprod{u_1} {u_1 - \alpha u_1 - \beta v_1 + w}\O = (-\bar\beta\sprod{u_1}{v_1})\O = \sprod{u_1}{v_1}\O=\sprod{L}{u_1-u_2}\]
  and by \Cref{sym:act} there exists $S \in S(L)$ such that $S({u_1}) = u_2$.
  Similarly for the case $\delta \in \O^\times$, we find $S \in S(L)$
  with $S(v_1)=v_2$.

  If $\alpha \in \O^\times$, then $\alpha^{-1} u_2 = {u_1} + \beta \alpha^{-1}v_1 + \alpha^{-1}w$,
  \[ \sprod{\alpha^{-1} u_2} {v_1} 
  = \sprod{u_1}{v_1}. \]
  and $(v_1, \alpha^{-1}u_2)$ is a hyperbolic pair splitting $L$.
  From \Cref{dyadic:rot1} it follows that there exists $T \in X(L)$ such that $T(u_1) = \alpha^{-1}u_2$.
  Similarly for $\gamma \in \O^\times$.

  We now assume that $\alpha,\beta,\gamma,\delta \in \FP$.
  In particular $w, w' \in M$ are primitive vectors with $\sprod{w}{L}=\sprod{w'}{L}=\sprod{u_1}{v_1}\O$.
  Now $0 = \sprodq{u_2}{u_2} = \Tr(\alpha \bar \beta \sprod{u_1}{v_1} ) + \sprodq w w$.
  Since $\alpha \bar \beta \sprod{u_1}{v_1} \in \FP^{2}\sprod{u_1}{v_1}$, we have
  \[\sprodq w w \in \fp\Tr(\sprod{u_1}{v_1}\O) \quad \mbox{and similarly}\quad \sprodq{w'} {w'} \in \fp\Tr(\sprod{u_1}{v_1}\O).\]
  On the other hand
  $\sprod w {w'} \equiv \sprod{u_2 }{v_2} \mod \FP^2 \sprod{u_1}{v_1}=\FP^2\sprod{u_2}{v_2}$,
  which implies that \[\sprod w {w'} \O = \sprod{u_2 }{v_2} \O =\sprod{u_1}{v_1} \O.\]
  Let $W = \O w \oplus \O w'$. Then
  \[\scale(W)=   \sprodq{w}{w}\O+\sprod{w}{w'}\O +\sprodq{w'}{w'}\O= \fp \Tr(\sprod{u_1}{v_1}\O) + \sprod{u_1}{v_1}\O=\sprod{u_1}{v_1}\O.\]
  Also
  \[\norm(W) = \sprodq{w}{w}\o + \Tr(\sprod{w}{w'}\O)  = \Tr(\sprod{u_1}{v_1}\O).\]
  As $2 + 2\lfloor (i + e)/2 \rfloor \geq i + e$, we have
  \[ \det(W) = \sprodq{w}{w}\sprodq{w'}{w'}- \sprod{w}{w'}\sprod{w'}{w} \equiv -\Nr(\sprod{w}{w'}) \mod \mathfrak p^{i + e}. \]
  Since $\Nr(\sprod w {w'})\o = \fp^i$ this implies $-\det(W)/\Nr(\sprod w {w'}) \equiv 1 \mod \fp^e$ and hence by \cite[XV, \S{}2, Cor. 2 to Th. 1]{Serre1979} we know that
  $-\det(W)/\Nr(\sprod w {w'}) \in \Nr(\O^\times)$.
  By \Cref{dyadic:modular} the lattice $W$ is a hyperbolic plane.
  Since $\sprod{W}{L}= \sprod{w}{w'}\O=\scale(W)$, the sublattice $W$ is an orthogonal summand of $M$.

  Hence we can find an isotropic vector $z \in W$ with $\sprod w z = \sprod{u_1}{v_1}$.
  Then
  \[\sprod{u_1} {v_1 + z} = \sprod{(\alpha + 1)^{-1} u_2}{v_1 + z}.\]
  As further $({v_1} + z,{u_1})$ and $(v_1+z, (\alpha + 1)^{-1}u_2$ are hyperbolic pairs splitting $L$, it follows from \Cref{dyadic:rot1} that there exists $T \in X(L)$ such that $T({u_1}) = (\alpha + 1)^{-1} u_2$.
\end{proof}

\begin{proposition}\label{dyadic:hyperbolic}
  Let $(u_1,v_1)$ and $(u_2,v_2)$ be hyperbolic pairs splitting a hermitian lattice $L$. If $\sprod{u_1}{v_1} = \sprod{u_2}{v_2}$,
  then there exists $T \in X(L)$ such that $T(u_1) = u_2$ and $T(v_1) = v_2$.
\end{proposition}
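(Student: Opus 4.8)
The plan is to chain \Cref{dyadic:rot2}, \Cref{dyadic:rot1} and \Cref{dyadic:hyperbolic-plane}: first move the two isotropic lines onto one another up to a unit scalar, then match the second basis vector up to the induced scalar, and finally clear the scalars inside the hyperbolic plane $P_2 = \O u_2 \oplus \O v_2$. I would begin by invoking \Cref{dyadic:rot2} to obtain $\varphi \in X(L)$ and $\varepsilon \in \O^\times$ with $\varphi(u_1) = \varepsilon u_2$ or $\varphi(v_1) = \varepsilon v_2$. Since the two coordinates play symmetric roles I treat only the first case; after replacing $(u_1, v_1)$ by $(\varphi(u_1), \varphi(v_1))$ it then suffices, pre-composing with $\varphi$ at the end, to produce $T \in X(L)$ with $T(u_1) = u_2$ and $T(v_1) = v_2$ under the extra assumption $u_1 = \varepsilon u_2$. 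Throughout, write $c = \sprod{u_1}{v_1} = \sprod{u_2}{v_2}$.

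Next I would set $w = \bar\varepsilon^{-1} v_2$. Then $w$ is isotropic with $\sprod{u_1}{w} = c$, and $\O u_1 \oplus \O w = \O u_2 \oplus \O v_2 = P_2$ splits $L$; hence $(u_1, v_1)$ and $(u_1, w)$ are hyperbolic pairs splitting $L$ with common first vector $u_1$ and equal pairing $c$. \Cref{dyadic:rot1} then supplies $\psi \in X(L)$ fixing $u_1$ with $\psi(v_1) = w$, moving the pair to $(\varepsilon u_2, \bar\varepsilon^{-1} v_2)$, which now lies entirely in $P_2$.

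Finally I would clear the scalars. The $E$-linear self-map of $P_2$ given by $u_2 \mapsto \varepsilon^{-1} u_2$ and $v_2 \mapsto \bar\varepsilon v_2$ lies in $U(P_2)$, since it keeps both vectors isotropic and $\sprod{\varepsilon^{-1} u_2}{\bar\varepsilon v_2} = c$. As $P_2$ is a hyperbolic plane, \Cref{dyadic:hyperbolic-plane} expresses it as a product of symmetries of $P_2$; because $P_2$ splits $L$, each such symmetry has its vector in $P_2$ and extends by the identity on $P_2^\perp$ to a symmetry of $L$ via \Cref{lem:symmetry}. This yields $G \in S(L) \subseteq X(L)$ with $G(\varepsilon u_2) = u_2$ and $G(\bar\varepsilon^{-1} v_2) = v_2$, so that $T = G \circ \psi$ is the required map.

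I expect the only genuine difficulty to be the unit scalar $\varepsilon$ that \Cref{dyadic:rot2} necessarily leaves behind. One cannot apply \Cref{dyadic:rot1} to send $v_1$ directly to $v_2$, because the candidate pair $(u_1, v_2) = (\varepsilon u_2, v_2)$ has pairing $\varepsilon c \neq c$ and is thus inadmissible; carrying the scalar along as $\bar\varepsilon^{-1} v_2$ and then absorbing it with the symmetries from \Cref{dyadic:hyperbolic-plane} is the heart of the argument. The remaining work is routine verification that each intermediate pair is isotropic, splits $L$, and has pairing $c$.
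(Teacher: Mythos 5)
Your proposal is correct and follows essentially the same route as the paper: apply \Cref{dyadic:rot2} to get $\varphi(u_1)=\varepsilon u_2$, then \Cref{dyadic:rot1} after twisting by the unit (the paper twists $\varphi(v_1)$ by $\bar\varepsilon$ while you twist $v_2$ by $\bar\varepsilon^{-1}$, which is the same idea), and finally absorb the residual diagonal map $u_2\mapsto\varepsilon^{-1}u_2$, $v_2\mapsto\bar\varepsilon v_2$ using $U(H(i))=S(H(i))$ from \Cref{dyadic:hyperbolic-plane}. The only cosmetic difference is your citation of \Cref{lem:symmetry} for extending symmetries of $P_2$ to $L$; the paper simply uses the standard identification of $U(P_2)$ with a subgroup of $U(L)$ for an orthogonal summand, which is the cleaner justification.
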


\begin{proof}
  By \Cref{dyadic:rot2} there exists $\varepsilon \in \O^\times$ and $\varphi \in X(L)$ such that $\varphi(u_1) = \varepsilon u_2$.
  For $v =  \bar \varepsilon \varphi(v_1)$
   \[\sprod{u_2}{v_2} = \sprod{u_1}{v_1} = \sprod{\varepsilon u_2}{\bar \varepsilon^{-1} v} = \sprod{u_2} v.\]
   By \Cref{dyadic:rot1} there exists $T' \in X(L)$ such that $T'(v) = v_2$ and $T'(u_2) = u_2$.
   Hence $T'(\varphi(u_1)) = \varepsilon u_2$ and $T'(\varphi(v_1)) = \bar\varepsilon^{\,-1}v_2$.
  Set $H_2 = \O u_2 \oplus \O v_2$.
  The claim follows by observing that the map $\sigma \colon H_2 \to H_2$ with $u_2 \mapsto \varepsilon^{-1}u_2$ and $v_2 \mapsto\bar\varepsilon v_2$ is an element of $U(H_2)$, which is equal to $S(H_2)$ by \Cref{dyadic:hyperbolic-plane}.
\end{proof}

\subsection{Jordan splittings}
The previous section shows that we may restrict ourselves to lattices which do not split a hyperbolic plane.
In this subsection we use the results of Jacobowitz to give criteria for a lattice to split a hyperbolic plane
and to control the invariants in a Jordan splitting,
which play an important role in the induction steps.

\begin{lemma}\label{dyadic:n<=k}
 Let $L = P \perp N$ be a hermitian lattice with $\rk P=2$, $\scale(P) = \FP^i$, $\norm(P)=\fp^k$,
 $\scale(N)=\FP^j$, $\norm(P)=\fp^n$ and $i<j$.
 If $n\leq k$, then $L$ splits a $\FP^i$-modular hyperbolic plane.
 \end{lemma}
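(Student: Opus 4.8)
The plan is to reduce the assertion to the construction of a single hyperbolic pair. If $u,v\in L$ are isotropic with $\sprod{u}{v}$ a generator of $\FP^i$, then $P'=\O u\oplus\O v$ is $\FP^i$-modular of scale $\FP^i=\scale(L)$; a modular sublattice whose scale equals $\scale(L)$ is an orthogonal direct summand, so $L=P'\perp(P')^\perp$, and $P'\cong H(i)$ by the shape of its Gram matrix. Thus it suffices to find a primitive isotropic $u\in L$ with $\sprod{u}{L}=\FP^i$ together with some $x\in L$ satisfying $\sprod{u}{x}\O=\FP^i$ and $\sprodq{x}{x}\in\fp^{\lfloor(i+e)/2\rfloor}$. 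Indeed, replacing $x$ by $x+cu$ alters $\sprodq{x}{x}$ by $\Tr(c\sprod{u}{x})$, which ranges over $\Tr(\FP^i)=\fp^{\lfloor(i+e)/2\rfloor}$ as $c$ runs through $\O$, so one may then correct $x$ to an isotropic partner $v$ of $u$. I record that $P$ being $\FP^i$-modular of norm $\fp^k$ gives $\FP^{2k}=\fp^k\O\subseteq\FP^i$, hence $2k\ge i$, and that $e\ge2$ in the ramified dyadic case.

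First I would classify $P$ by \Cref{dyadic:modular}: either $P\cong H(i)$, and we are done, or $P\cong H(i,k)$ with $k<\lfloor(i+e)/2\rfloor$, or $P\cong A(i,k)$. Consider the hyperbolic case and a standard basis $(x_0,y_0)$ of $P$, so that $y_0$ is isotropic with $\sprod{y_0}{P}=\FP^i$ and $\sprodq{x_0}{x_0}=p^k$. Taking $u=y_0$ (legitimate since $\sprod{y_0}{N}=0$), I look for $x=x_0+w$ with $w\in N$ and $\nu_\fp(p^k+\sprodq{w}{w})\ge\lfloor(i+e)/2\rfloor$. Writing a norm generator $w_0$ of $N$ with $\sprodq{w_0}{w_0}=g_0p^n$, $g_0\in\o^\times$, and setting $w=\pi^{k-n}\epsilon w_0\in N$ (allowed because $n\le k$) gives $\sprodq{w}{w}=g_0\Nr(\epsilon)p^k$, so the requirement becomes $\Nr(\epsilon)\equiv-g_0^{-1}\bmod\fp^{d}$ with $d=\lfloor(i+e)/2\rfloor-k$. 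Since $2k\ge i$ and $e\ge2$ force $d\le e/2\le e-1$, this is solvable by the surjectivity of $\Nr(\O^\times)\to(\o/\fp^{e-1})^\times$ underlying \Cref{dyadic:norm-mod}.

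The anisotropic case $P\cong A(i,k)$ is where the real difficulty lies. Now $P$ carries no isotropic vector, so $u$ must have a nonzero component in $N$, and $\sprodq{u}{u}=0$ has to be solved exactly over $\O$ rather than modulo $\fp^{\lfloor(i+e)/2\rfloor}$. With a standard basis $(x_0,y_0)$ of $A(i,k)$ I search for $u=\alpha x_0+\beta y_0+w$ with $(\alpha,\beta)$ primitive and $w\in N$, so that
\[\Nr(\alpha)p^k+\Tr(\alpha\bar\beta\pi^i)-\Nr(\beta)u_0p^{i-k}+\sprodq{w}{w}=0.\]
The two self-norm contributions $\Nr(\alpha)p^k$ and $-\Nr(\beta)u_0p^{i-k}$, together with the values $\sprodq{w}{w}$ of $N$ (which represent valuation $k$ since $n\le k$), must be made to cancel, the factor $u_0$ ensuring access to the non-norm coset of $\Nr(\O^\times)$. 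As $\rk L\ge3$, the space $V(L)$ is automatically isotropic, so a solution exists over $E$; the task is to realise it integrally with $(\alpha,\beta)$ primitive. Once $u$ is found, the auxiliary vector $x$ is produced inside $u^\perp$ exactly as in the hyperbolic case.

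The main obstacle is precisely this exact integral solvability: \Cref{dyadic:norm-mod} only furnishes agreement modulo $\fp^{e-1}$, and upgrading this to $\sprodq{u}{u}=0$ on the nose calls for a successive-approximation argument exploiting the free parameters $\beta$ and $w$, together with a case distinction driven by the $\Nr(\O^\times)$-coset of the norm generator of $N$ and by whether $\rk N=1$ (which, one checks, forces the boundary situation $2k=i+e-1$, so that the $y_0$-direction already contributes at valuation $k$) or $\rk N\ge2$ (so that $N$ itself represents both cosets). Keeping $u$ primitive with $\sprod{u}{L}=\FP^i$ throughout this bookkeeping is the technical heart of the proof.
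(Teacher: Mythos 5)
Your constructive strategy (build an explicit hyperbolic pair and use the fact that a modular sublattice of scale $\scale(L)$ splits off) is a legitimate alternative to the paper's argument, which never constructs any isotropic vector at all: the paper instead invokes the Jacobowitz classification (\Cref{dyadic:classification}) and simply checks that $H(i,k)\perp N$ and $H(i)\perp N$, respectively $A(i,k)\perp N$ and $H(i)\perp N'$, have the same invariants. Your treatment of the case $P\cong H(i,k)$ is essentially correct: taking $u=y_0$, perturbing $x_0$ by $\pi^{k-n}\epsilon w_0\in N$, solving $\Nr(\epsilon)\equiv -g_0^{-1}\bmod \fp^d$ with $d\leq\lfloor e/2\rfloor\leq e-1$ via \Cref{dyadic:norm-mod}, and then killing the remaining norm by adding a multiple of $u$ (using $\Tr(\FP^i)=\fp^{\lfloor(i+e)/2\rfloor}$) does produce a splitting $H(i)$.

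However, the case $P\cong A(i,k)$ is not proved: you set up the isotropy equation
\[\Nr(\alpha)p^k+\Tr(\alpha\bar\beta\pi^i)-\Nr(\beta)u_0p^{i-k}+\sprodq{w}{w}=0,\]
observe that the ambient space is isotropic, and then state that the exact integral solvability with $(\alpha,\beta,w)$ kept primitive ``is the technical heart of the proof'' --- but that heart is exactly what is missing. Describing the needed successive-approximation scheme is not the same as carrying it out, and this case is the genuinely hard half of the lemma: since $A(i,k)$ is anisotropic, the isotropic vector must mix $P$ and $N$, and the paper's proof shows why the bookkeeping is delicate --- there one has $A(i,k)\perp N\cong H(i)\perp N'$ where $N'$ is forced to lie in the \emph{other} determinant class (condition (2) and (4) of \Cref{dyadic:classification}, using $\det(A(i,k))/\det(H(i))=-(1+u_0)/(-1)$), so the orthogonal complement of any hyperbolic plane you construct is never isometric to $N$, and the cancellation must exploit the non-norm unit $1+u_0$ in an essential way. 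Moreover, your parenthetical claim that $\rk N=1$ forces $2k=i+e-1$ is false: for example $i=0$, $k=n=1$, $j=2$ with $e=4$ satisfies all hypotheses of the lemma with $\rk N=1$, yet $2k=2\neq i+e-1=3$. So the case analysis you sketch would not even start from correct premises. As it stands, the proposal proves the lemma only when $P\cong H(i,k)$; to close the gap you would either have to supply the full approximation argument for $A(i,k)$, or do what the paper does and replace the construction by an invariant comparison via \Cref{dyadic:classification}.
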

\begin{proof}
 We may assume that $N$ is modular.

 First assume that $P \cong H(i,k)$ where
 $i+e \geq  2k \geq i$.
 To prove that
 \[L=H(i,k) \perp N \cong H(i) \perp N=L',\]
 we check the isometry conditions given in \Cref{dyadic:classification}:
 Note that $i=2k$ gives the contradiction $i<j\leq 2n \leq 2k=i$.
 Hence $i \neq 2k$, that is, $H(i,k)$ is subnormal.
  We conclude that both Jordan splittings have (1) the same Jordan type, (2) $\det(L) = \det(L')$ and (4) the Jordan constituents have the same determinant.
 For (3) we note that
 \[\norm(L^{\FP^i})=\norm(L)=\fp^n=\norm(L')=\norm(L'^{\,\FP^i}).\]
 Further $N \subseteq L^{\scale(N)}\subseteq L$
 yields
 $\norm(L^{\scale(N)})=\fp^n$
 and similarly $\norm(L'^{\scale(N)})=\fp^n$.

 Now assume that $P \cong A(i,k)$ where
 $i \leq 2k < i+e$.
 Since
 \[i <j \leq 2n \leq 2k < i+e < j+e,\]
 we have $\scale(N)\FD \subsetneq \norm(N)\O$.
 Hence we can find a $\FP^j$-modular lattice $N'$ of norm
 $\fp^n$ with
  \[\det(N') \det(H(i)) = \det(N) \det(A(i,k)).\]
 Now we show that
 \[L = A(i,k) \perp N \cong H(i) \perp N' = L'.\]
 As before, conditions (1)--(3) follow. It remains to check (4) which amounts to
  \[\det(A(i,k))/{\det(H(i))} \cong 1 \mod \fp^{2n-i}.\]
  Indeed this is true as $\det(A(i,k))/{\det(H(i))}-1 = u_0 \in \fp^{e-1} \subseteq \fp^{2k-i}\subseteq \fp^{2n-i}$.
\end{proof}

\begin{lemma}\label{dyadic:rels}
Suppose that $i<j$, $j-i \leq n - k$.
Let $M$ be a $\FP^i$-modular lattice of norm $\fp^k$.
Then the following holds:
\begin{enumerate}
 \item[(a)] We have
\[M \perp H(j,n) \cong M \perp H(j).\]
\item[(b)] If $i+e-2k > j - i$, then there is some lattice $M'$ with
\[M \perp A(j,n) \cong M' \perp H(j)\]

\item[(c)] Put $n' = j - i + k$, then
\[M \perp A(j,n) \cong M \perp A(j,n'),\]
\[M \perp H(j,n) \cong M \perp H(j,n').\]
\end{enumerate}
\end{lemma}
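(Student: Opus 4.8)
The plan is to prove all three parts by the method already used in \Cref{dyadic:n<=k}: write down the evident Jordan splittings of the two lattices and verify the four isometry conditions of \Cref{dyadic:classification}. Throughout I use that $E/K$ ramified dyadic forces $e \geq 2$. First I record the numerical input of the hypotheses. From $j-i \leq n-k$ we get $j-i+k \leq n$, and combining with the plane's validity bound $2n \leq j+e$ gives $2(j-i+k)\leq j+e$, hence
\[ j-i+k \leq \lfloor (j+e)/2 \rfloor. \]
Next I observe that the hypotheses force every plane occurring to be subnormal: were its norm exponent equal to $j/2$, then $j-i+k\leq j/2$ would give $k\leq i-j/2$, whence $i\leq 2k\leq 2i-j$ and $j\leq i$, contradicting $i<j$; so the norm exponent exceeds $j/2$. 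Since $e\geq 2$ one has $2\lfloor(j+e)/2\rfloor>j$, so the hyperbolic plane $H(j)$ is subnormal as well, which is what condition~(1) requires. Finally, for a $\FP^i$-modular $M$ and $j>i$ one has $M^{\FP^j}=\FP^{j-i}M$, so $\norm(M^{\FP^j})=\fp^{j-i+k}$; this is the only truncated norm that is not simply $\norm(L)=\fp^k$.

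For parts (a) and (c) I keep $M$ unchanged. The plane determinants depend only on $j$: $\det H(j,m)=-p^j$ and $\det A(j,m)=-p^j(1+u_0)$, independent of the norm parameter $m$. Hence both sides have equal total determinant, so condition~(2) holds with ratio $1$, and condition~(4), which for $t=2$ only involves the common first constituent $M$, is trivial. Condition~(1) reduces to the normality matching recorded above (both planes subnormal, same scale and rank). For condition~(3), at level $\FP^i$ both lattices have norm $\fp^k$ since $k\leq n,n',\lfloor(j+e)/2\rfloor$, and at level $\FP^j$ both have norm $\fp^{j-i+k}$: on the $n$-side because $j-i+k\leq n$, and on the $n'$-side because $n'=j-i+k$ (for the hyperbolic summand in part~(a) one uses $j-i+k\leq\lfloor(j+e)/2\rfloor$ in place of $n'$). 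This settles (a) and (c).

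Part~(b) is the substantive case, because $\det A(j,n)=-p^j(1+u_0)$ differs from $\det H(j)=-p^j$ by the non-norm unit $(1+u_0)$. I would therefore replace $M$ by a lattice $M'$ of the same rank, scale $\FP^i$ and norm $\fp^k$ but with $\det M'=(1+u_0)\det M$. Such an $M'$ exists by \Cref{dyadic:modular}: one twists the distinguished summand of $M$, replacing $H(i,k)\leftrightarrow A(i,k)$ in the even-rank case or $\langle u p^{i/2}\rangle\mapsto\langle u(1+u_0)p^{i/2}\rangle$ in the odd-rank case, which multiplies the determinant class by $(1+u_0)$ while preserving scale, rank, norm, and hence the normality type of the first constituent. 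With this choice conditions~(1)--(3) follow exactly as in parts~(a) and (c), and the total determinants agree again, giving~(2).

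The crux is condition~(4) for the pair $(M,M')$. The modulus is $\o\cap\mathfrak n_1\mathfrak n_2\scale(M)^{-2}$ with $\mathfrak n_1=\fp^k$, $\mathfrak n_2=\fp^{j-i+k}$ and $\scale(M)^{-2}=\FP^{-2i}$; passing to $\FP$-valuations and intersecting with $\o$ yields $\fp^{2k+j-2i}$. Since $1+u_0\in\o^\times$ we have $(1+u_0)^2=\Nr(1+u_0)\in\Nr(\O^\times)$, so $\det M/\det M'=(1+u_0)^{-1}=(1+u_0)$ in $K^\times/\Nr(\O^\times)$, and condition~(4) becomes $u_0\in\fp^{2k+j-2i}$. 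As $u_0\o=\fp^{e-1}$, this holds exactly when $e-1\geq 2k+j-2i$, that is $i+e-2k>j-i$, which is precisely the hypothesis of part~(b). I expect this determinant-modulus bookkeeping, together with checking that $M'$ exists with the correct invariants, to be the main obstacle; the remaining verifications are the same routine Jordan-type and norm computations as in \Cref{dyadic:n<=k}.
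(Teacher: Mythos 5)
Your proof is correct and follows essentially the same route as the paper: both verify conditions (1)--(4) of \Cref{dyadic:classification}, with the same subnormality and truncated-norm computations, and for part (b) both construct $M'$ by twisting the distinguished summand of $M$ (swapping $H(i,k)\leftrightarrow A(i,k)$ or rescaling a rank-one summand), reducing condition (4) to $u_0\in\fp^{2k+j-2i}$, i.e.\ $i+e-2k>j-i$. The only cosmetic difference is that the paper splits the construction of $M'$ by normality of $M$ rather than by parity of its rank, which amounts to the same thing.
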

\begin{proof}
If $M$ is normal, then $M$ is a sum of rank one lattices and we can flip the norm class of the determinant of a single rank one sublattice to obtain $M'$.
Otherwise, by \Cref{dyadic:modular}, $M=P\perp H(i)^r$ with $P$ equal to $A(i,k)$ or $H(i, k)$. Since we assume $2k < i+e$, we are free to set $M' = A(i,k) \perp H(i)^r$ or $M'= H(i,k) \perp H(i)^r$.
We check conditions (1)--(4) of Theorem~\ref{dyadic:classification}:

(1) All decompositions have the same ranks and scales.
Suppose that $j=2n$, then $i\leq 2k <2n=j$ implies that
$j-i \geq 2(n-k)$. This contradicts $n-k \geq j-i>0$.
Thus $j< 2n$, that is, the second summand $H(j,n)$ or $A(j,n)$
is subnormal. Hence all decompositions have the same Jordan type.

(2) By construction the lattices have the same total determinant.

(3) Let $L=L_1 \perp L_2$ be the left hand side and $L'=L_1' \perp L_2'$ the right hand side in one of the decompositions and $\fp^{ n'} = \norm({L_2'})$.
It follows that
\[\norm_{1}= \norm(L) =\fp^k = \norm(L') = \norm'_{1},\]
\[\norm_{2}= \norm(L^{\FP^{j}}) = \fp^{k+j-i} + \fp^n= \fp^{k+j-i},\]
\[\norm'_{2}= \norm (L'^{\,\FP^{j}}) = \fp^{k+j-i} + \fp^{n'}.\]
Then $\norm_{2} = \norm'_{2}$ if and only if $j-i \leq n' -k$.

(4) This is true for (a) and (c).
For (b) we have to check that
  \[\det(M)/\det(M')=(1+u_0) \cong 1 \mod \mathfrak n_{1}\mathfrak n_{2}\fp^{-i}\]
which is equivalent to $e - 1 \geq 2k+j-2i$, that is,
$i+e -2k > j - i$.
\end{proof}

\begin{lemma}\label{dyadic:n-k=j-i}
   Let $P$ be a $\FP^i$-modular plane of norm $\fp^k$ and
   $L =P \perp M$ with $\scale(M)=\FP^j$, $\norm(M)=\fp^n$.
  If $0<j - i \leq n - k$, then $L \cong  P \perp M'$
  with $\scale(M')=\FP^j$ and $\norm(M')=\fp^{j-i+k}$.
\end{lemma}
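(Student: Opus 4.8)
The plan is to reduce the general statement to the case in which $M$ is $\FP^j$-modular, and then to read off the lattice $M'$ from the classification of modular lattices in \Cref{dyadic:modular} together with \Cref{dyadic:rels}(c). Write $n' = j-i+k$ for the target norm exponent; note that the hypothesis $0 < j-i\le n-k$ says precisely $i<j$ and $n'\le n$.

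Granting for the moment the modular case (settled below), I would carry out the reduction as follows. Fix a Jordan splitting $M = M_1 \perp \tilde M$ whose first constituent $M_1$ is $\FP^j$-modular, say of norm $\fp^{n_1}$, and with $\scale(\tilde M)\subseteq \FP^{j+1}$. From $\fp^n = \norm(M) = \norm(M_1) + \norm(\tilde M)$ one reads off $n \le n_1$ and $\norm(\tilde M)\subseteq \fp^n$. In particular the numerical hypothesis passes to the first constituent, since $0 < j-i \le n-k \le n_1-k$. The modular case then produces an $\FP^j$-modular lattice $M_1'$ of norm $\fp^{n'}$ with $P\perp M_1 \cong P\perp M_1'$; adjoining $\id_{\tilde M}$ gives $L = P\perp M_1\perp\tilde M \cong P\perp M_1'\perp\tilde M =: P\perp M'$. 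Because $n'\le n$, the untouched deeper part satisfies $\norm(\tilde M)\subseteq \fp^n\subseteq \fp^{n'}$, whence $\norm(M') = \fp^{n'}$ and $\scale(M') = \FP^j$, as required.

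For the modular case I would apply \Cref{dyadic:modular} to $M$ (now $\FP^j$-modular of norm $\fp^n$). If $\rk M$ is odd, then $M \cong \langle u p^{j/2}\rangle \perp H(j)^r$ with $j$ even and $n = j/2$; but then $j-i\le n-k$ forces $k \le i - j/2 < i/2 \le k$ (the last inequality because $P$ has scale $\FP^i$, so $2k\ge i$), which is absurd. Hence $\rk M$ is even and $M \cong M_0 \perp H(j)^r$ with $M_0 = H(j,n)$ or $A(j,n)$. Since $P$ is $\FP^i$-modular of norm $\fp^k$ and $0 < j-i\le n-k$, \Cref{dyadic:rels}(c) applies with $P$ playing the role of its modular summand and gives $P\perp M_0 \cong P\perp M_0'$, where $M_0' = H(j,n')$ or $A(j,n')$. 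Carrying $H(j)^r$ along unchanged yields $P\perp M \cong P\perp (M_0'\perp H(j)^r) = P\perp M'$, and $\norm(M') = \fp^{n'}$ because $n'\le n\le \lfloor (j+e)/2\rfloor$ forces $\norm(H(j)^r)\subseteq \fp^{n'}$.

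The step I expect to be the main obstacle is the reduction to the modular case: one must be sure that adjusting only the first Jordan constituent $M_1$ genuinely controls the norm of all of $M$. This rests on the two monotonicity observations $n\le n_1$ and $\norm(\tilde M)\subseteq \norm(M)$, the first transferring the hypothesis to $M_1$ and the second preventing the unchanged summand $\tilde M$ from spoiling the target norm $\fp^{n'}$. Once this is secured, the modular case is a bookkeeping application of \Cref{dyadic:modular} and \Cref{dyadic:rels}(c), the only genuine check being that the numerical hypothesis rules out the odd-rank constituent.
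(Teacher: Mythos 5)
Your proof is correct and follows essentially the same route as the paper's: take a Jordan splitting of $M$, rule out an odd-rank first Jordan constituent by the same numerical contradiction (using $2k\geq i$ and $0<j-i\leq n-k$), and then apply \Cref{dyadic:rels}(c) with $P$ as the modular summand, carrying the remaining constituents along unchanged. If anything, you are slightly more careful than the paper, which tacitly writes $\norm(M_1)=\fp^n$ for the first constituent; your explicit distinction between $n$ and $n_1$, together with the check that the untouched deeper part $\tilde M$ cannot spoil the target norm $\fp^{j-i+k}$, supplies bookkeeping the paper leaves implicit.
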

\begin{proof}
  Let $M = \bigperp_{l= 1}^t M_l$ be a Jordan splitting. Suppose that $\rk M_1$ is odd. Then $M_1$ is subnormal and therefore $2n = j$. It follows that $j-i = 2n -i \geq 2(n-k) \geq 2(j-i)$, which contradicts $j-i>0$.
 Thus $\rk M_1 = 2 + 2r$ is even with $r \geq 0$ and
  $M_1$ is isomorphic to one of $H(j,n) \perp H(j)^r$ or $A(j,n) \perp H(j)^r$.
  By \Cref{dyadic:rels} (c) we find an isometry $P\perp M_1 \cong P \perp M_1'$
  with $\scale(M_1')=\scale(M_1)=\FP^j$ and $\norm(M_1')=\fp^{k+j-i}$.
\end{proof}

\subsection{The normal case}
We now consider the case that $L$ is normal and does not split a hyperbolic plane. Then $L = N \perp M$ or $L = P \perp M$ with
$\scale(M) \subseteq \FP \scale(N)$ and a line $N$ or normal plane $P$. Note that a normal plane $P=H(i,i/2)$ splits into two lines.
We shall prove that $U(L) = S(L) U(M)$.

\begin{lemma}\label{dyadic:reduce}
  Let $L = \O x \perp M$ be a hermitian lattice and
  $\varphi \in U(L)$ with $\varphi(x) = \alpha x + t$, where $\alpha \in \O$, $t \in M$. Then there exists $T \in S(L)$ such that $T(\varphi(x)) = \alpha' x + t$ and $v_\FP(1 - \alpha') \leq e-1$.
\end{lemma}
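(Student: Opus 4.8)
The plan is to prove the claim with a \emph{single} symmetry of the line $\O x$. First I would dispose of the trivial case: if $\nu_\FP(1-\alpha) \le e-1$ already, I take $T = \id$. So assume $\nu_\FP(1 - \alpha) \ge e$; since $e \ge 1$ this forces $\alpha \equiv 1 \pmod{\FP}$, so $\alpha \in \O^\times$. Because $\O x$ is a nondegenerate orthogonal summand of $L$, the scalar $q := \sprodq{x}{x}$ lies in $K^\times$, so the symmetries I want to use are available.

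The key step is to set $\sigma = \rho q$ and $T = S_{x,\sigma}$, where $\rho$ is the element fixed at the start of the section with $\Tr(\rho) = 1$ and $\rho\O = \FP^{1-e}$. Then $\Tr(\sigma) = q\,\Tr(\rho) = q = \sprodq{x}{x}$, so $T$ is genuinely a symmetry. To apply \Cref{lem:symmetry} I would compute $\sprod{L}{x} = q\O$ from $L = \O x \perp M$ and check $q\O \subseteq \sigma\O = q\,\FP^{1-e}$, which holds precisely because $\FP^{e-1} \subseteq \O$; hence $T \in U(L)$ and so $T \in S(L)$.

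Finally I track $\varphi(x) = \alpha x + t$ under $T$. As $t \in M$ with $\sprod{t}{x} = 0$ we have $T(t) = t$, while $T(\alpha x) = \alpha(1 - q\sigma^{-1})x = \alpha(1 - \rho^{-1})x$; thus $T(\varphi(x)) = \alpha' x + t$ with $\alpha' = \alpha(1-\rho^{-1})$ and $1 - \alpha' = (1-\alpha) + \alpha\rho^{-1}$. The crux is the valuation bookkeeping: since $\alpha$ is a unit, $\nu_\FP(\alpha\rho^{-1}) = -\nu_\FP(\rho) = e-1$, whereas $\nu_\FP(1-\alpha) \ge e$, so the ultrametric inequality gives $\nu_\FP(1-\alpha') = e-1$, as needed. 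I do not expect a serious obstacle here; the only subtlety is that the choice $\sigma = \rho q$ (rather than an arbitrary element of trace $q$) is exactly what pins the correction term $\alpha\rho^{-1}$ to valuation $e-1$, which is why the distinguished element $\rho$ with $\rho\O = \FP^{1-e}$ is the right tool.
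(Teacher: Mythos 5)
Your proof is correct and follows essentially the same route as the paper: the same symmetry $S_{x,\sigma}$ with $\sigma = \sprodq{x}{x}\rho$, the same identity $\alpha' = \alpha(1-\rho^{-1})$, and the same valuation count $\nu_\FP(\rho^{-1}) = e-1$ versus $\nu_\FP(1-\alpha)\geq e$ (the paper phrases this as a congruence modulo $(\alpha-1)\O$ rather than via the ultrametric inequality, but the content is identical). Your explicit handling of the trivial case and the observation that $\alpha$ is a unit are fine additions but change nothing of substance.
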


\begin{proof}
 Suppose that $\nu_\FP(1-\alpha)\geq e$.
 Recall that $\rho \in E$ is an element with $\Tr(\rho)=1$ and $\nu_\FP(\rho)=1-e$.
 Set $\sigma = \sprodq{x}{x}\rho$.
 Then
 \[\sprod{L}{x}\sigma^{-1}=\sprodq{x}{x}\sigma^{-1}\O=\rho^{-1}\O\subseteq \O\]
 and hence $S_{x,\sigma}\in S(L)$.
 We have
 \[S_{x,\sigma}(\varphi(x)) = \alpha ' x + t\]
 with $\alpha ' = \alpha (1-\sprodq{x}{x}\sigma^{-1})=\alpha(1-\rho^{-1})$.
 Then
 $\alpha' -1 \equiv -\rho^{-1} \mod (\alpha-1)\O$ and since $(\alpha-1)\O
 \subseteq \FP^e$ it follows that $v_\FP(1 - \alpha') = -v_\FP(\rho) = e-1$.
 This finishes the proof.
 \end{proof}

\begin{lemma}\label{dyadic:normal-rk1}
  Let $L = \O x \perp M$ be a hermitian lattice with $\scale(M) \subseteq \FP\sprodq{x}{x}$.
  If $L$ does not split a hyperbolic plane, then $U(L) = S(L)U(M)$.
\end{lemma}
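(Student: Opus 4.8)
The plan is to show that every $\varphi \in U(L)$ lies in $S(L)U(M)$ by carrying $\varphi(x)$ back to $x$ with a product of symmetries. First I would record the reduction: if $g \in U(L)$ fixes $x$, then for $m \in M$ one has $\sprod{g(m)}{x} = \sprod{g(m)}{g(x)} = \sprod{m}{x} = 0$, so $g$ preserves $x^\perp \cap L = M$ and hence $g \in U(M)$. Thus it suffices, given $\varphi \in U(L)$ and $x' := \varphi(x)$, to produce a product of symmetries $S$ with $S(x') = x$: then $S\varphi$ fixes $x$, so $S\varphi \in U(M)$, and $\varphi = S^{-1}(S\varphi) \in S(L)U(M)$. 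Note that $x'$ is primitive with $\sprod{x'}{L} = \sprod{x}{L} = \sprodq{x}{x}\O$ and $\sprodq{x'}{x'} = \sprodq{x}{x}$.

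Write $x' = \alpha x + t$ with $\alpha \in \O$ and $t \in M$; the isometry condition gives $\sprodq{t}{t} = (1 - \Nr(\alpha))\sprodq{x}{x}$. Applying \Cref{dyadic:reduce}, which modifies $x'$ by a product of symmetries while fixing the $M$-component $t$, I may assume $\nu_\FP(1 - \alpha) \le e - 1$. The natural candidate is the symmetry $S_{s,\sigma}$ of \Cref{sym:act} with $s = x - x'$ and $\sigma = \sprod{x}{x - x'} = (1 - \bar\alpha)\sprodq{x}{x}$, which sends $x$ to $x'$. Using $L = \O x \perp M$ one computes $\sprod{L}{s} = (1 - \bar\alpha)\sprodq{x}{x}\O + \sprod{M}{t}$, so by \Cref{lem:symmetry} the integrality $S_{s,\sigma} \in U(L)$ reduces to $\sprod{M}{t} \subseteq (1 - \bar\alpha)\sprodq{x}{x}\O$. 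Since $\scale(M) \subseteq \FP\sprodq{x}{x}$, this holds whenever $\nu_\FP(1 - \alpha) \le 1$; in particular the cases $\alpha \not\equiv 1 \bmod \FP$ and $\nu_\FP(1 - \alpha) = 1$ are settled at once.

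The remaining case $2 \le \nu_\FP(1 - \alpha) \le e - 1$, possible only when $K/\QQ_2$ is ramified so that $e \ge 3$, is the crux. Here $\sprod{M}{t}$ can be as large as $\scale(M)$ while $\sprodq{t}{t}$ is much deeper, so the direct symmetry need not preserve $L$. Moreover the trick used in the inert case, precomposing with an auxiliary symmetry $S_{s_0,\sigma_0}$ with $\sigma_0 = \rho\sprodq{s_0}{s_0}$, fails here: because $\rho\O = \FP^{1-e}$, the correction term $\sprod{s_0}{x'}\bar\sigma_0^{-1}\sprod{x}{s_0}$ acquires $\FP$-valuation $e - 1 + \nu_\FP(\sprodq{x}{x})$, which is too deep to lower $\nu_\FP(1-\alpha)$ when $\nu_\FP(1-\alpha) < e-1$. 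The plan is therefore to use the hypothesis that $L$ does not split a hyperbolic plane to constrain the Jordan structure of $M$: via \Cref{dyadic:n<=k} and \Cref{dyadic:n-k=j-i} I would normalize the lowest Jordan constituent $M_1$ of $M$ relative to $\scale(\O x)$, either bounding its norm from below so that $\sprod{M}{t}$ is forced to be correspondingly deep, or else producing an explicit two-symmetry correction built from $\rho$ and the skew element $\omega$ that moves $x'$ to some $\alpha'' x + t''$ with $\nu_\FP(1-\alpha'') \le 1$, whereupon the previous paragraph applies. I expect this to require splitting into the finitely many standardized shapes of $M_1$ from \Cref{dyadic:modular} and treating each separately.

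The main obstacle is precisely this last step. The absence of a unit of trace one means every symmetry must be set up through $\rho$ (of valuation $1-e$) and $\omega$, so the uniform valuation-lowering argument available in the unramified case breaks down; the delicate interplay between the depth of $t$ inside $M$ and the size of $\nu_\FP(1-\alpha)$ has to be controlled by hand, and it is exactly the non-splitting hypothesis, fed through the classification of modular lattices, that rules out the otherwise obstructive configurations.
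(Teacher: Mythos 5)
Your setup matches the paper's: the reduction to moving $\varphi(x)$ back to $x$ by symmetries, the direct symmetry $S_{s,\sigma}$ with $s=x-\varphi(x)$, and the use of \Cref{dyadic:reduce} to force $v:=\nu_\FP(1-\alpha)\le e-1$ are exactly its first steps. But note one refinement you miss: writing $\scale(\O x)=\FP^i$ and $\scale(M)=\FP^j$, the direct symmetry is integral whenever $v\le j-i$, not only when $v\le 1$ as you state. This matters, because the auxiliary correction one can actually build in the hard case only lowers $v$ to roughly $n-k+1$ (notation below), which lands in the window $v\le j-i$ but in general \emph{not} in $v\le 1$; so your stated target of reducing to $\nu_\FP(1-\alpha'')\le 1$ is the wrong (and likely unachievable) one.

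The genuine gap is the crux case $v>j-i$, which you only describe as a plan (``I would normalize \dots or else produce an explicit two-symmetry correction \dots I expect this to require splitting into finitely many shapes'') and yourself flag as the main obstacle; none of it is carried out. The paper resolves it in two concrete steps. First, with $\fp^k=\norm(\O x)$ and $\fp^n=\norm(M)$, the non-splitting hypothesis forces $n-k<j-i$: if $M$ is normal this is automatic (then $2n=j$, $2k=i$), and if $M$ is subnormal its first Jordan constituent is a plane $H(j,n)$ or $A(j,n)$, and $j-i\le n-k$ would let \Cref{dyadic:rels}~(a) or~(b) split off a hyperbolic plane --- here the running reduction $j-i<e-1$ (available because the case $j-i\ge e-1$ is already settled by \Cref{dyadic:reduce}) is exactly what verifies the hypothesis $i+e-2k>j-i$ of \Cref{dyadic:rels}~(b). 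Second, one picks $y\in M$ with $\sprodq{y}{y}=\epsilon p^{n-k}\sprodq{x}{x}$, rescales via \Cref{dyadic:norm-mod} so that $1+\epsilon\equiv 0 \bmod \fp^{e-1}$, and sets $s=\pi^{n-k}x+y$ and $\sigma=\sprodq{s}{s}\rho+\omega$, where the skew element $\omega$ is chosen to pin $n+k-\nu_\FP(\sigma)\in\{0,1\}$; then $S_{s,\sigma}\in S(L)$ and a valuation computation gives $\nu_\FP(\alpha'-1)\le n-k+1\le j-i$, landing in the stronger first case. Your proposal contains neither the deduction $n-k<j-i$ nor this construction; the choice of $\epsilon$, of $s$, and especially the adjustment of $\sigma$ by $\omega$ to control its valuation (compensating for $\rho\O=\FP^{1-e}$, the very obstruction you identify) are precisely the non-obvious content of the lemma. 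Two smaller points: the hard case does not require $K/\QQ_2$ to be ramified ($e=3$ already occurs for $K=\QQ_2$, e.g.\ $E=\QQ_2(\sqrt{2})$), and the lemmas you cite (\Cref{dyadic:n<=k}, \Cref{dyadic:n-k=j-i}) are the ones used for the subnormal case \Cref{dyadic:subnormal}; the classification input needed here is \Cref{dyadic:rels}.
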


\begin{proof}
  The statement is true for $M=0$. So assume $M \neq 0$.
  Let $\scale(\O x)=\FP^i$, $\scale(M)=\FP^j$, $\norm(\O x)=:\fp^k$ and $\fp^n :=\norm(M)$.
  Since $\O x$ is normal, we have $k < n$.
  Consider $\varphi \in U(L)$ and write $\varphi(x) = \alpha x + z$ with $\alpha \in \O$ and $z \in M$.
  Since
  \[\sprodq{x}{x}=\sprodq{\varphi(x)}{\varphi(x)}\equiv \alpha \bar \alpha\sprodq{x}{x} \mod \fp^n\]
  and $\fp^n\subseteq \fp^{k+1}$, we obtain $v := \nu_\FP(\alpha -1) \geq 1$ by \Cref{dyadic:goingdown}.

We have
\[ \sprod L {x - \varphi(x)}=(1-\bar\alpha)\FP^i+\sprod{M}{z} \subseteq \FP^{i+v}+\FP^j,\]
\[\sprod x {x - \varphi(x)}\O = \FP^{i+v},\]
and if $j-i \geq v$, it follows from \Cref{sym:act} that there exists $S \in S(L)$ with $S(x) = \varphi (x)$.

Let $j-i<v$.
By \Cref{dyadic:reduce}, we can assume that $v\leq e-1$. This settles the case $j-i\geq e-1$.

We continue with $j-i < e-1$.
If $M$ is normal, then $2n = j$, $2k=i$ and $j-i=2(n-k)>n-k$.
Now let $M$ be subnormal, that is, $j<2n$. In particular $M = P \perp M'$ for some $\FP^j$-modular plane $P$ of norm $\fp^n$. Note that $j+e>2n$ since $P \ncong H(j)$.
Suppose that $j-i \leq n-k$. If $P \cong H(j,n)$, then \Cref{dyadic:rels} (a) implies that $L$ splits a hyperbolic plane.
Otherwise $P\cong A(j,n)$. Since $j-i < e-1=i+e-2k - 1$ (using $i=2k$), we can apply \Cref{dyadic:rels} (b) to get a hyperbolic plane once again.
In both cases this is a contradiction to our assumption on $L$.
Thus in either case---$M$ normal or subnormal---we may assume that $n-k< j - i$.

Let $y \in M$ with $\sprodq{y}{y} = \epsilon p^{n-k} \sprodq{x}{x}$ for some $\epsilon \in \o^\times$.
By \Cref{dyadic:norm-mod}, after rescaling $y$ with a unit, we may assume that $1+\epsilon \equiv 0 \mod \fp^{e-1}$.
For $s = \pi^{n-k}x + y$ we have
\[\sprodq{s}{s} = \sprodq{x}{x}p^{n-k}(1+\epsilon)\]
and further
\[\sprod{L}{s} \subseteq \FP^{i+n-k} + \FP^j=\FP^{i+n-k}.\]
Set $\sigma' = \sprodq{s}{s}\rho$. Since $\nu_\FP(\sigma') \geq 2n + e - 1 \geq j > i+n-k$,
we can find $\sigma = \sigma' + \omega$ with $\Tr(\omega)=0$ and
\[i+n-k-\nu_\FP(\sigma)=n+k - \nu_\FP(\sigma) \in \{0,1\}.\]
Then $S_{s,\sigma} \in S(L)$ and
$S_{s,\sigma}(\alpha x + z) = \alpha'x + z'$ with $z' \in M$ and 
\begin{eqnarray*}
\alpha' &=& (\alpha- \alpha p^{n-k}\sprodq{x}{x}\sigma^{-1}-\sprod{z}{y}\sigma^{-1}\pi^{n-k})\\
        &\equiv& 1- (p^{n-k}\sprodq{x}{x} + \sprod{z}{y}\pi^{n-k})\sigma^{-1} \mod \FP^{j-i},
\end{eqnarray*}
where we used that $j-i<v$.
Using $2k=i$ and $n-k< j-i$ we have
\[\nu_\FP(p^{n-k}\sprodq{x}{x})=2n=n+k +(n-k) < n+k + j-i = j + n-k\leq \nu_\FP(\sprod{z}{y}\pi^{n-k}).\]
Thus
\[\nu_\FP(\alpha'-1)=2n - \nu_\FP(\sigma) = (n-k) + (n+k -\nu_\FP(\sigma)) \leq n-k+1 \leq j-i.\]
By the first part we find $S \in S(L)$ with $S(x) = S_{s,\sigma}(\varphi(x))$.
\end{proof}

\begin{lemma}\label{dyadic:normal-rk2}
  Let $L = \O x \perp \O y \perp M$ be a hermitian lattice with $\sprodq{x}{x}\O = \sprodq{y}{y}\O=\fp^k\O=\FP^i$ and $\scale(M) =\FP^{j}\subsetneq \FP^{i}$.
  Then $U(L) = S(L)U(\O y \perp M)$.
\end{lemma}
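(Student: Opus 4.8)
The plan is to reduce the statement to a single-vector transitivity claim and then to settle it by a case analysis on the expansion of $\varphi(x)$ in the given orthogonal basis, following the template of the rank-one case \Cref{dyadic:normal-rk1} but now exploiting the second line $\O y$ of equal norm. First I would reduce to showing that for every $\varphi \in U(L)$ there is a product of symmetries $S \in S(L)$ with $S(x) = \varphi(x)$. Indeed, since $x$ is primitive and $L = \O x \perp (\O y \perp M)$ is orthogonal, the orthogonal complement of $\O x$ in $L$ is exactly $\O y \perp M$; hence any $T \in U(L)$ fixing $x$ preserves $\O y \perp M$ and therefore lies in the subgroup $U(\O y \perp M)$. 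Applying this to $T = S^{-1}\varphi$ gives $\varphi = S \cdot (S^{-1}\varphi) \in S(L)U(\O y \perp M)$, and the reverse inclusion is trivial.

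Next I would set $x' = \varphi(x)$ and write $x' = \alpha x + \beta y + w$ with $\alpha, \beta \in \O$, $w \in M$. From $\sprodq{x'}{x'} = \sprodq{x}{x}$ together with $\scale(M) = \FP^j \subsetneq \FP^i = \fp^k\O$ (so $i = 2k$ and $\norm(M) = \fp^n$ with $2n \geq j > i$, forcing $\sprodq{w}{w} \in \fp^{k+1}$) I extract the congruence $\alpha\bar\alpha\,\sprodq{x}{x} + \beta\bar\beta\,\sprodq{y}{y} \equiv \sprodq{x}{x} \bmod \fp^{k+1}$, which governs the two main cases. After rescaling $y$ by a unit of $\O$ and using \Cref{dyadic:norm-mod} I may assume $\sprodq{y}{y} \equiv \sprodq{x}{x}$ to high $\fp$-power, so that the symmetry with $s = x - y$ and $\sigma = \sprodq{x}{x}$ lies in $U(L)$ and swaps $x$ and $y$; more generally all reflections supported on the equal-norm binary lattice $\O x \perp \O y$ become available. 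The decisive quantity is $\sprod{x}{x-x'} = \sprodq{x}{x}(1-\bar\alpha)$, and by \Cref{sym:act} a single symmetry maps $x$ to $x'$ as soon as $\nu_\FP(1-\alpha) \leq \min(\nu_\FP(\beta),\, j-i)$.

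The work is therefore to premultiply by symmetries until $x'$ satisfies this inequality, and this splits according to whether $\beta$ is a unit. If $\beta \in \O^\times$, then $\sprod{y}{x'} = \bar\beta\,\sprodq{y}{y}$ generates $\scale(L) = \FP^i$, so I can reflect $x'$ in a vector $s \in \O x \perp \O y$ with $\sprod{x}{s}\O = \sprod{x'}{s}\O = \FP^i$ and thereby arrange that $\sprod{x}{x - S(x')}$ generates $\FP^i$, landing in the direct case; here the characteristic-two congruence above pins down $\alpha, \beta \bmod \fp$, and the residue field $\FF_2$ subcase is dispatched by explicit choices as in \Cref{lem:symmetry-special-2}. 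If $\beta \notin \O^\times$, then $\beta\bar\beta\,\sprodq{y}{y} \in \fp^{k+1}$, so $\alpha\bar\alpha \equiv 1 \bmod \fp$ and hence $\alpha \equiv 1 \bmod \FP$ by \Cref{dyadic:goingdown}; the situation now resembles \Cref{dyadic:normal-rk1}. After capping $\nu_\FP(1-\alpha) \leq e-1$ via \Cref{dyadic:reduce}, I build an auxiliary symmetry $S_{s,\sigma}$ with $s = \pi^{n-k}x + (\text{a vector in } \O y \perp M)$ and $\sigma = \sprodq{s}{s}\rho + \omega$, using $\Tr(\rho) = 1$, $\rho\O = \FP^{1-e}$ and the skew element $\omega$ to position $\nu_\FP(\sigma)$ so that $\sprod{x}{x - S_{s,\sigma}(x')}$ generates exactly $\FP^i$.

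The main obstacle is precisely this last construction. Because there is no unit of trace one, the valuation of $\sigma$ is constrained by the dyadic trace deficiency measured by $e$, and one must simultaneously (i) keep $\sprod{L}{s} \subseteq \sigma\O$ so that $S_{s,\sigma} \in U(L)$ by \Cref{lem:symmetry}, and (ii) hit the target $\FP$-valuation $i$; the feasible choices for $\nu_\FP(\sigma)$ depend on the parity of $e$ and force separate treatments according to whether $\omega$ may be taken of even or odd $\FP$-valuation, that is, whether $\eta$ is a prime element or a unit. The equal-norm line $\O y$ is exactly what supplies the extra isotropic directions needed to run this argument, which is why the rank-two core can be handled with pure symmetries and without the no-hyperbolic-splitting hypothesis used in the rank-one case.
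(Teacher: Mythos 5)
Your overall skeleton is the same as the paper's (reduce to mapping $x$ to $\varphi(x)$ by symmetries, split on valuations, and in the hard case apply an auxiliary symmetry $S_{s,\sigma}$ with $s$ built from $x$ and $y$ and $\sigma=\sprodq{s}{s}\rho+\omega$), but two steps are genuinely flawed. First, the claimed swap symmetry does not exist: for $S_{s,\sigma}$ with $s=x-y$, $\sigma=\sprodq{x}{x}$ to be a symmetry at all one needs $\Tr(\sigma)=2\sprodq{x}{x}$ to equal $\sprodq{s}{s}=\sprodq{x}{x}+\sprodq{y}{y}$ \emph{exactly}, whereas rescaling $y$ via \Cref{dyadic:norm-mod} only gives a congruence modulo $\fp^{k+e-1}$; when $\sprodq{x}{x}/\sprodq{y}{y}\notin\Nr(\O^\times)$ exact equality is impossible. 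The ensuing claim that ``all reflections supported on $\O x\perp\O y$ become available'' is precisely what fails in the ramified dyadic case --- the trace obstruction is the whole difficulty --- so your treatment of the case $\beta\in\O^\times$ (including the appeal to explicit choices ``as in'' \Cref{lem:symmetry-special-2}, a split-case lemma whose idempotent-based constructions have no analogue here) rests on a false premise. That case, incidentally, needs no work: if $\beta$ is a unit, then the norm congruence gives $\sprodq{x}{x}(1-\alpha\bar\alpha)\equiv\beta\bar\beta\sprodq{y}{y}\bmod\fp^{k+1}$, so $1-\alpha\bar\alpha\in\o^\times$, hence $\alpha\not\equiv 1\bmod\FP$ by \Cref{dyadic:goingdown} and $1-\alpha$ is a unit, so your direct criterion already applies. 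This is why the paper divides cases on $\nu_\FP(1-\alpha)$ rather than on $\beta$.

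Second, in the main case ($\beta\in\FP$, $1-\alpha\in\FP^2$) you never verify the point the whole lemma turns on: one must produce $\sigma$ with $\Tr(\sigma)=\sprodq{s}{s}$ and $\nu_\FP(\sigma)\in\{i-1,i\}=\{2k-1,2k\}$, the upper bound being needed for $S_{s,\sigma}\in U(L)$ (since $\sprod{L}{s}=\FP^i$ must lie in $\sigma\O$, \Cref{lem:symmetry}) and the lower bound for $\nu_\FP(1-\alpha')=2k-\nu_\FP(\sigma)\leq 1$. Every such $\sigma$ lies in $\sprodq{s}{s}\rho+\{\text{skew elements}\}$, and skew elements only realize valuations $\equiv e\bmod 2$, so this positioning is possible only if $\sprodq{s}{s}$ has unusually large valuation. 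The paper secures this by choosing $\epsilon$ via \Cref{dyadic:norm-mod} so that the norms nearly \emph{cancel}, $\sprodq{s}{s}=\sprodq{x}{x}+\epsilon\bar\epsilon\sprodq{y}{y}\in\fp^{k+e-1}$, whence $\nu_\FP(\sprodq{s}{s}\rho)\geq 2k+e-1\geq 2k$ and the skew adjustment is immediate. Your normalization has the opposite sign ($\epsilon\bar\epsilon\sprodq{y}{y}\equiv+\sprodq{x}{x}$), which gives $\sprodq{s}{s}\approx 2\sprodq{x}{x}$ of valuation about $k+\nu_\fp(2)$; this does happen to suffice, but only because of the inequality $e\leq 2\nu_\fp(2)+1$ together with a parity check on $e$, neither of which appears in your sketch --- and this is exactly the kind of dyadic bookkeeping that cannot be waved through. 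Note also that you cannot in general force $\sprod{x}{x-S_{s,\sigma}(x')}$ to generate \emph{exactly} $\FP^i$ (the reachable valuation of $\sigma$ depends on the parity of $e$); the outcome $\nu_\FP(1-\alpha')=1$ must be absorbed by the direct criterion, as the paper does, which your stated inequality fortunately permits.
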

\begin{proof}
  Let $\varphi \in U(L)$ and write $\varphi(x) = \alpha x + \beta y + m$ for $\alpha,\beta \in \O$ and $m \in M$.
We have
\[\sprod{x}{x-\varphi(x)}=(1-\bar \alpha)\sprodq{x}{x}\]
and
\[\sprod{L}{x-\varphi(x)} = (1-\bar\alpha)\sprodq{x}{x} \O +\beta \FP^i + \sprod{M}{m}.\]
If $1-\alpha \in \O^\times$, then we can find a symmetry mapping $x$ to $\varphi(x)$. Otherwise $1 \equiv \alpha \mod \FP$ and
\[\sprodq{x}{x} = \sprodq{\varphi(x)}{\varphi(x)} \equiv \sprodq{x}{x} + \beta\bar\beta\sprodq{y}{y} \mod \FP^{k+1},\]
implies $\beta \bar \beta \in \fp$. In particular $\beta \in \FP$. If $\nu_\FP(1-\alpha)=1$, then
$\sprod{L}{x-\varphi(x)}=\FP^{i+1}$ and we can find a symmetry mapping $x$ to $\varphi(x)$.

Suppose that $1-\alpha \in \FP^2$.
By \Cref{dyadic:norm-mod} we find $\epsilon \in \O^\times$ such that $s= x + \epsilon y$ satisfies $\sprodq{s}{s}=\sprodq{x}{x}+\epsilon \bar \epsilon \sprodq{y}{y} \in \fp^{k+e-1}$.
Let $\sigma = \sprodq{s}{s}\rho+\omega$ with $\Tr(\rho)=1$, $\Tr(\omega)=0$ and
$2k - \nu_\FP(\sigma) \in \{0,1\}$. This is possible since
$\nu_\FP(\sprodq{s}{s}\rho) \geq 2k+e-1\geq 2k$.
Then $S_{s,\sigma} \in S(L)$.
We have $S_{s,\sigma}(\varphi(x)) =\alpha'x+\beta' y + m$ with $\beta' \in \O$ and
\begin{eqnarray*}
  1-\alpha' &=&1-\alpha + \sprod{\alpha x + \beta y}{x+\epsilon y}\sigma^{-1}\\
            &=&1-\alpha +(\alpha \sprodq{x}{x} + \beta \bar \epsilon \sprodq{y}{y})
\sigma^{-1}\\
&\equiv& (\sprodq{x}{x} + \beta \bar \epsilon\sprodq{y}{y})\sigma^{-1} \mod \FP^2.
\end{eqnarray*}
As further $\beta \in \FP$, we have $\nu_\FP(1-\alpha')=\nu_\FP(\sprodq{x}{x}\sigma^{-1})=2k-\nu_\FP(\sigma)\leq 1$.
By the first part we can find a symmetry mapping $x$ to $S_{s,\sigma}(\varphi(x))$.
\end{proof}

\subsection{The subnormal case}
The goal of this subsection is to prove the following result.

\begin{proposition}\label{dyadic:subnormal}
  Let $L = P \perp M$ be a hermitian lattice with $P$ a $\FP^i$-modular plane with $\norm(P)=\fp^k$,
 $\scale(M)\subseteq \FP^{i+1}$ and $i< 2k$. Assume that $L$ does not split a hyperbolic plane. Then $U(L)=S(L)U(M)$.
\end{proposition}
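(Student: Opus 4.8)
The plan is to fix a standard basis $(x,y)$ of the subnormal plane $P$, so that by \Cref{dyadic:modular} we have $P\cong A(i,k)$ or $P\cong H(i,k)$, with $\sprodq{x}{x}=p^k$ generating $\norm(P)=\fp^k$ and $\sprod{x}{y}=\pi^i$ generating $\scale(P)=\FP^i$ (and $\sprodq{y}{y}=0$ in the $H(i,k)$ case). Given $\varphi\in U(L)$ I aim to construct a product of symmetries $S\in S(L)$ with $S\varphi(x)=x$ and $S\varphi(y)=y$; then $S\varphi$ fixes $P$ pointwise, hence preserves $M=L\cap P^\perp$, giving $S\varphi\in U(M)$ and $\varphi\in S(L)U(M)$. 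Before this I would use the assumption that $L$ splits no hyperbolic plane together with \Cref{dyadic:n<=k}: applied to $P$ and any Jordan constituent $N$ of $M$ (which has scale $\subseteq\FP^{i+1}$, hence strictly deeper than $\FP^i$), it shows $\norm(N)\subsetneq\fp^k$, since otherwise $P\perp N$, and hence $L$, would split a $\FP^i$-modular hyperbolic plane. Consequently $\norm(M)\subseteq\fp^{k+1}$ and $\norm(L)=\fp^k$, so that $x$ and every image $\varphi(x)$ generate the norm of $L$. This is precisely where the no-hyperbolic-plane assumption enters, and it is what prevents the valuation estimates below from collapsing into the hyperbolic situation handled in \Cref{dyadic:hyperbolic}.

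\textbf{Step 1: mapping $\varphi(x)$ to $x$.}
Writing $\varphi(x)=\alpha x+\beta y+m$ with $\alpha,\beta\in\O$, $m\in M$, one computes $\sprod{x}{x-\varphi(x)}=(1-\bar\alpha)p^k-\bar\beta\pi^i$. If $\beta\in\O^\times$, then since $2k>i$ this has $\FP$-valuation exactly $i$, so it generates $\scale(L)$ and \Cref{sym:act} directly provides a symmetry relating $x$ and $\varphi(x)$. If $\beta\in\FP$, comparing $\sprodq{\varphi(x)}{\varphi(x)}=p^k$ with the inclusions $\norm(M)\subseteq\fp^{k+1}$ and $\Tr(\FP^{i+1})\subseteq\fp^{k+1}$ forces $\alpha\bar\alpha\equiv1\bmod\fp$, whence $\alpha\in\O^\times$ with $\alpha\equiv1\bmod\FP$ by \Cref{dyadic:goingdown}, and the naive symmetry fails. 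In this case I follow the mechanism of \Cref{dyadic:normal-rk1,dyadic:normal-rk2}: choose an auxiliary $s\in L$ that again generates $\norm(L)$ but is transverse to $x$, adjust its norm class modulo $\fp^{e-1}$ using \Cref{dyadic:norm-mod}, and set $\sigma=\sprodq{s}{s}\rho+\omega$, tuning $\nu_\FP(\sigma)$ by the skew element $\omega$ so that $S_{s,\sigma}\in S(L)$ by \Cref{lem:symmetry}. A computation of $\sprod{x}{x-S_{s,\sigma}\varphi(x)}$ then shows that its valuation drops to $i$, reducing to the first case.

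\textbf{Step 2: mapping $\varphi(y)$ to $y$ while fixing $x$.}
After Step 1 I may assume $\varphi(x)=x$. Unitarity gives $\sprod{x}{\varphi(y)}=\sprod{x}{y}=\pi^i$, so $s:=\varphi(y)-y$ is automatically orthogonal to $x$; hence every symmetry $S_{s,\sigma}$ built from this $s$ fixes $x$ while being a candidate to send $\varphi(y)$ to $y$. I again invoke \Cref{sym:act} when $\sprod{y}{s}$ generates the scale, and otherwise repeat the auxiliary-symmetry construction of Step 1, now performed inside $x^\perp$ so that orthogonality to $x$ is preserved throughout. The anisotropic case $P\cong A(i,k)$ and the isotropic case $P\cong H(i,k)$ (where $\sprodq{y}{y}=0$) are treated by the same device, the latter being slightly simpler since $\sigma$ may then be taken skew. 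Combining both steps yields $S\in S(L)$ with $S\varphi|_P=\id_P$, and therefore $U(L)=S(L)U(M)$.

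\textbf{Main obstacle.}
The technical heart is the auxiliary-symmetry construction in the failure cases of both steps: one must produce, for the relevant failure, a vector $s$ of prescribed norm valuation together with a scalar $\sigma$ whose $\FP$-valuation is tuned — via $\rho$, $\omega$ and \Cref{dyadic:norm-mod} — so that simultaneously $S_{s,\sigma}\in S(L)$ and the valuation of $\sprod{x}{x-S_{s,\sigma}\varphi(x)}$ (respectively of the analogous quantity for $y$) is forced down to $i$. Controlling the interacting parameters $i$, $k$, $e$ and the norm classes modulo $\fp^{e-1}$ and $\FP$, while ensuring that every intermediate configuration which would otherwise stall is excluded by the no-hyperbolic-plane hypothesis through \Cref{dyadic:n<=k,dyadic:rels}, is the main difficulty.
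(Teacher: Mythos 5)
Your overall skeleton does match the paper's: first move $\varphi(x)$ to $x$ by symmetries (the paper's \Cref{dyadic:u}), then move the image of the second basis vector while fixing $x$ (the paper's \Cref{dyadic:subnormal-plane}), with the no-hyperbolic-plane hypothesis entering first through \Cref{dyadic:n<=k} to force $\norm(M)\subseteq\fp^{k+1}$. However, there is a genuine gap at exactly the point you label ``the main obstacle,'' and it is not of the kind your Step 2 suggests. After arranging $\varphi(x)=x$, write $\varphi(y)=\gamma x+\delta y+m$ with $m\in M$. The natural symmetry built from $s=y-\varphi(y)$ and $\sigma=\sprod{y}{y-\varphi(y)}$ does fix $x$ and send $y$ to $\varphi(y)$, but it preserves $L$ only if $\sprod{M}{m}\subseteq\gamma\pi^i\O$; the failure mode is that $\nu_\FP(\gamma)$ is too large relative to $\sprod{M}{m}$. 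Any auxiliary symmetry fixing $x$ that one applies to repair this (the paper's \Cref{dyadic:eierlegendewollmichsau}, built from $s=\epsilon\pi^{n-k}v'+x_0$ with $v'\in P\cap x^\perp$ and $x_0\in M$ a norm generator of $M$) replaces $\gamma$ by
\[\gamma'\equiv-\left(\bar\pi^{\,n-k+i}+\sprod{m}{x_0}\right)\sigma^{-1}\pi^{n-k} \mod \FP^{n-k+1+q},\]
so success depends on ruling out cancellation between $\bar\pi^{\,n-k+i}$ and $\sprod{m}{x_0}$. Excluding that cancellation is \emph{not} a repetition of Step 1 performed inside $x^\perp$: it requires first normalizing the Jordan splitting of $M$ (\Cref{dyadic:n-k=j-i} together with \Cref{dyadic:rels}, again using the no-hyperbolic-plane hypothesis to exclude $H(j,n)$ as a constituent and to exclude $i+e-2k>j-i$) so that one may assume either $j-i>n-k$, or $j-i=n-k$ with $M\cong A(j,n)\perp M'$, $\norm(M')\subseteq\fp^{n+1}$, and prescribed behavior of $\lfloor(j+e)/2\rfloor$; and then, in the hardest case, an argument of a different nature (the paper's \Cref{dyadic:mx}, supported by \Cref{dyadic:mm} and \Cref{dyadic:aik-norm}) which exploits that $\varphi$ is an isometry of all of $L$ --- in particular the shape of $\varphi(x_0)$ inside $A(j,n)\perp M'$ --- to prove $\sprod{m}{x_0}\not\equiv\bar\pi^{\,j}\bmod\FP^{j+1}$. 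None of this is available from the Step 1 mechanism, and your proposal stalls exactly here.

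A secondary, repairable inaccuracy: your Step 1 auxiliary construction is modeled on the normal case (\Cref{dyadic:normal-rk1}, \Cref{dyadic:normal-rk2}), where one has an orthogonal basis vector of the correct norm to build $s$; a subnormal plane has no orthogonal basis, and the paper instead uses the symmetry $S_{v,\sigma}$ in the second basis vector itself, with $\sigma=\sprodq{v}{v}\rho+\omega$ tuned so that $i-1\le\nu_\FP(\sigma)\le i$. Moreover its purpose is to lower $\nu_\FP(\beta)$ to at most $1$, not to force $\sprod{x}{x-S\varphi(x)}$ to valuation exactly $i$: the direct symmetry then applies because $\sprod{x}{x-\varphi(x)}\O=\pi^{i+\nu}\O=\sprod{L}{x-\varphi(x)}$ with $\nu\le 1$, which is what \Cref{sym:act} actually needs.
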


Before we can give the proof, we fix the setup and give a series of lemmas.
Throughout this subsection $L = P \perp M$ with $P$ a $\FP^i$-modular plane of norm $\fp^k$ and $M$ of scale $\FP^j$ and norm $\fp^n$. We assume that $L$ does not split a hyperbolic plane, $1\leq n-k \leq j-i$ and $P$ is subnormal, in particular $i<2k<i+e$ and $j\leq 2n < j+e$.
Let $u,v \in P$ with
\[\sprodq{u}{u}= p^k, \quad  \sprod{u}{v} = \pi^i \quad \mbox{ and }  \quad \sprodq{v}{v} \in \fp^{i-k+e-1}.\]
Set $v' = u -  \pi^i p^{k-i}v$ and note that $(\O u)^\perp = \O v' \perp M$. A quick calculation shows that the element $v'$ satisfies
  \begin{equation}\label{dyadic:defv-prime}
\sprod{u}{v'}=0, \quad
  \sprodq{v'}{v'}\equiv -  p^k \mod \fp^{k+e-1},\quad \sprod{v'}{v}\equiv \pi^i \mod \FP^{i+2(e-1)}.
  \end{equation}
  We assume that there exists $x \in M$ with $\sprodq{x}{x}=p^n$ and $\sprod L x = \scale(M)$ (if $M=0$, we set $n = \infty$ and adopt the convention $p^\infty=0$).
To justify this assumption let $a \in \o^\times$ be a unit. Note that $\prescript{a}{}{P}\cong P$ and changing $L$ to $\prescript{a}{}{L}$ does not affect norms, scales or being hyperbolic and $U(L)=U(\prescript{a}{}{L})$, $S(L) = S(\prescript{a}{}{L})$.

\begin{lemma}\label{dyadic:u}
 Suppose that either
 \begin{enumerate}
  \item[(a)] $2k+2 \leq i+e$ or
  \item[(b)] $2k+1 = i+e$ and $\sprodq{v}{v}\o=\fp^{i-k+e-1}$.
 \end{enumerate}
 Let $\varphi \in U(L)$.
 Then there exists $\psi \in S(L)$ with $\varphi(u) = \psi(u)$.
\end{lemma}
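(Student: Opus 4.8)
The plan is to show that $w \defeq \varphi(u)$ lies in the $S(L)$-orbit of $u$ by a Cartan--Dieudonné type argument, reflecting $u$ onto $w$ with the symmetries supplied by \Cref{sym:act}. Since $\varphi$ is an isometry and $u \in P$ satisfies $\sprodq{u}{u} = p^k$ and $\sprod{u}{L} = \FP^i = \scale(L)$ (using $\scale(M) \subseteq \FP^{i+1}$), the image $w$ satisfies the same two relations. I would write $w = \alpha u + \beta v + m$ with $\alpha, \beta \in \O$, $m \in M$, and keep the formula $\sprod{u}{w} = \bar\alpha p^k + \bar\beta \pi^i$ at hand.

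First I would treat the \emph{generic case} $\sprod{u}{u-w}\O = \FP^i$. Set $s = u - w$ and $\sigma = \sprod{u}{u-w}$. The equality $\sprodq{u}{u} = \sprodq{w}{w}$ gives $\Tr(\sigma) = \sprodq{s}{s}$, and since $\sprod{L}{u-w} \subseteq \FP^i = \sigma\O$ we get $\sprod{L}{s}\sprod{u}{s}^{-1}s \subseteq \O s \subseteq L$. Hence \Cref{sym:act} produces $S_{s,\sigma} \in S(L)$ with $S_{s,\sigma}(u) = w$, and $\psi = S_{s,\sigma}$ works.

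The remaining, and principal, case is the degenerate one $\sprod{u}{u-w} \in \FP^{i+1}$, i.e.\ $\sprod{u}{w} \equiv p^k \bmod \FP^{i+1}$. As $\nu_\FP(p^k) = 2k > i$, this forces $\beta \in \FP$, and then the condition $\sprod{w}{L} = \FP^i$ (in which $\sprod{w}{u}, \sprod{w}{M} \subseteq \FP^{i+1}$) forces $\nu_\FP(\sprod{w}{v}) = i$, hence $\alpha \in \O^\times$. The strategy is to apply an auxiliary symmetry $S_{s,\sigma} \in S(L)$ moving $w$ to $w' = S_{s,\sigma}(w)$ with $\sprod{u}{u-w'}\O = \FP^i$ and then invoke the generic case for $(u,w')$, composing to obtain $\psi$. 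Using
\[\sprod{u}{u - S_{s,\sigma}(w)} = \sprod{u}{u-w} + \sprod{s}{w}\,\bar\sigma^{-1}\,\sprod{u}{s},\]
and noting that $u,w \in L$ force $\nu_\FP(\sprod{u}{s}), \nu_\FP(\sprod{s}{w}) \geq i$, the correction term always has $\nu_\FP \geq 2i - \nu_\FP(\sigma)$; so I must arrange $\nu_\FP(\sigma) = i$ with both $\sprod{u}{s}$ and $\sprod{s}{w}$ generating $\FP^i$. The natural candidate is $s = v$, where $\sprod{u}{v} = \pi^i$ and, using $\alpha \in \O^\times$, $\beta \in \FP$ and $\sprodq{v}{v} \in \fp^{i-k+e-1}$, also $\nu_\FP(\sprod{v}{w}) = i$; the scalar must then satisfy $\Tr(\sigma) = \sprodq{v}{v}$, $\nu_\FP(\sigma) = i$ and $\sprod{L}{v} = \FP^i \subseteq \sigma\O$.

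I expect the main obstacle to be exactly the construction of this $\sigma$, and this is where hypotheses (a) and (b) are tailored to enter. Writing $\sigma = \sprodq{v}{v}\rho + \omega'$ with $\Tr(\rho)=1$, $\nu_\FP(\rho) = 1-e$ and $\omega'$ skew, the principal part has $\nu_\FP = 2(i-k+e-1) + (1-e)$, which equals $i$ precisely when $2k+1 = i+e$; under (b) the extra hypothesis $\sprodq{v}{v}\o = \fp^{i-k+e-1}$ forces the trace into $\fp^{\lfloor(i+e)/2\rfloor} = \Tr(\FP^i)$ with the right valuation, so a $\sigma$ of valuation $i$ exists. Under (a), where $2k+2 \leq i+e$, the principal part lies in $\FP^{i+1}$ and one repairs the valuation by a skew correction built from $\omega$ and powers of $p$; the inequality in (a) is exactly what guarantees $\sprodq{v}{v} \in \Tr(\FP^i)$. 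The delicate bookkeeping is the parity interplay between $\nu_\FP(\sigma)=i$, the identity $\Tr(\FP^i)=\fp^{\lfloor(i+e)/2\rfloor}$ from \cite{Serre1979}, and the available valuations of skew elements (congruent to $e$ modulo $2$); in the sub-cases where a single $s=v$ cannot hit $\nu_\FP(\sigma)=i$ I would either feed in the reference vector $x \in M$ with $\sprodq{x}{x}=p^n$ to adjust the norm of the auxiliary vector, or interpose a second symmetry. Throughout, membership $S_{s,\sigma} \in S(L)$ is checked by \Cref{lem:symmetry}, and once $w'$ is attained the generic case concludes the proof.
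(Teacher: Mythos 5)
Your overall strategy---compose $\varphi$ with an auxiliary symmetry $S_{v,\sigma}$ so that \Cref{sym:act} can be applied to the pair $(u,\varphi(u))$---is the same as the paper's, and several pieces are correct: the generic case, the deduction $\beta\in\FP$, $\alpha\in\O^\times$ in the degenerate case, case (b) (where $\sigma=\sprodq{v}{v}\rho$ has valuation exactly $i$ by the hypothesis on $\sprodq{v}{v}$), and the part of case (a) where $i\equiv e \bmod 2$. The genuine gap is case (a) with $i\not\equiv e \bmod 2$. Your plan requires $\sigma$ with $\Tr(\sigma)=\sprodq{v}{v}$ and $\nu_\FP(\sigma)=i$ exactly, and no such $\sigma$ exists there: every solution has the form $\sprodq{v}{v}\rho+\omega$ with $\omega$ skew; under (a) the first summand has valuation at least $2(i-k+e-1)+1-e=i+(i+e-2k-1)\geq i+1$, while every nonzero skew element has valuation $\equiv e\not\equiv i \bmod 2$, so every admissible $\sigma$ has valuation either $\geq i+1$ or of the wrong parity, never $i$. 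This sub-case is non-empty (e.g.\ $e=4$, $i$ odd, $2k=i+1$). You flag exactly this parity problem, but the repairs you offer do not close it: the vector $x\in M$ need not exist ($M=0$ is allowed here), and when it does, $p^n$ has no reason to have $\fp$-valuation $(i+e-1)/2$, which is what an auxiliary vector's norm must have; ``interpose a second symmetry'' is not an argument, and indeed no symmetry with $\nu_\FP(\sigma)=i-1$ can help inside your framework, since then the correction term lies in $\FP^{i+1}$ and you never leave the degenerate case.

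The paper closes this case by \emph{not} insisting on reaching $\sprod{u}{u-\varphi(u)}\O=\FP^i$. It chooses $\omega$ skew with $i-1\leq\nu_\FP(\sprodq{v}{v}\rho+\omega)\leq i$ (always possible), which after applying $S_{v,\sigma}$ makes the $v$-coefficient $\beta'$ satisfy $\nu_\FP(\beta')\leq 1$, and then applies \Cref{sym:act} directly to $s=u-\varphi(u)$. The point you miss is that \Cref{sym:act} does not require $\sprod{u}{s}\O=\scale(L)$; it only requires the containment $\sprod{L}{s}\subseteq\sprod{u}{s}\O$, and after the reduction both ideals equal $\FP^{i+\nu}$ with $\nu=\nu_\FP(\beta')\leq 1$, so the value $\FP^{i+1}$ is perfectly acceptable. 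For this one needs $\sprod{v}{s}\in\FP^{i+1}$, i.e.\ the congruence $\alpha\equiv 1\bmod\FP$, which the paper extracts from the norm equation $\sprodq{\varphi(u)}{\varphi(u)}=p^k$ and which you never establish (you only get $\alpha\in\O^\times$). Alternatively, your scheme could be saved within its own terms by replacing $s=v$ with $s=\pi^{t}u+v$, $t=(i+e-1)/2-k\geq 1$, whose norm has $\fp$-valuation exactly $(i+e-1)/2$, so that $\sigma=\sprodq{s}{s}\rho$ has valuation exactly $i$ and both $\sprod{u}{s}$ and $\sprod{s}{\varphi(u)}$ have valuation exactly $i$; but this construction is not in your proposal, and without it (or the paper's relaxation) the proof is incomplete.
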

\begin{proof}
  Write $\varphi(u) = \alpha u + \beta v + w$ for $w\in M$.

 (a) Then
 \[p^k = \sprodq{\varphi(u)}{\varphi(u)} = \alpha \bar \alpha  p^k + \Tr(\alpha \bar \beta \pi^i) + \beta\bar\beta\sprodq{v}{v}+ \sprodq{w}{w}.\]
 Since by our assumptions $k+1 \leq \lfloor (i+e)/2 \rfloor$ and $\sprodq{v}{v} \in \fp^{k+1}$, we have
  \[ p^k \equiv \alpha \bar  \alpha p^k \pmod{ \Tr(\FP^i)+\fp^{k+1}+\fp^n=\fp^{k+1}}.\]
  This gives $\alpha \bar \alpha \equiv 1 \mod \fp$ and thus $\alpha -1 \equiv 0 \mod \FP$ by \Cref{dyadic:goingdown}.

  If $\nu_\FP(\beta)> 1$, then we consider $S_{v,\sigma}$ with
  $\sigma=\sprodq{v}{v}\rho+\omega$ where we choose $\omega \in E$ skew and such that $i - 1 \leq \nu_\FP(\sigma) \leq i$. This is possible, since
  \[\nu_\FP(\sprodq{v}{v}\rho)\geq 2(i-k)+e-1 = i + (i+e - 2k -1) \geq i.\]
As we now have
\[S_{v,\sigma}(\varphi(u)) = \alpha u + (\beta-(\alpha \pi^i+\beta \sprodq{v}{v})\sigma^{-1})v+w,\]
  after replacing $\varphi$ by $S_{v,\sigma}\circ \varphi$ we can assume that $\nu=\nu_\FP(\beta)\leq 1$.

  Then,
  using that $i<2k$ and $\alpha \equiv 1 \mod \FP$, we arrive at
  \[ \sprod u {u - \varphi(u)}\O =( p^k (1-\bar \alpha) -\bar \beta \pi^i) \O = \pi^{i+\nu} \O = \sprod{u - \varphi(u)}{L}. \]
  Hence by \Cref{sym:act} there exists a symmetry $S \in U(L)$ with $S(u) = \varphi(u)$.

  (b) We have $k = \lfloor (i+e)/2 \rfloor$ and $\sprodq{v}{v}\o = \fp^{i-k+e-1}$.
  If $\beta \in \O^\times$, then, since $i<2k$,
  \[ \sprod u {u - \varphi(u)}\O =( p^k (1-\bar \alpha) -\bar \beta \pi^i) \O = \pi^{i} \O = \sprod{u - \varphi(u)}{ L}, \]
  and by \Cref{sym:act} we can map $\varphi(u)$ to $u$ via a symmetry.
  Otherwise let $\beta \in \FP$. As before one can show that this implies $\alpha \in \O^\times$.
  For the symmetry
  $S_{v,\sigma}$ with $\sigma=\sprodq{v}{v}\rho$ we have
  \[\nu_\FP(\sigma)=2(i-k)+e-1 = i.\]
  Then, using that $i<2k$ and $\alpha \in \O^\times$, we obtain
  \[S_{v,\sigma}(\varphi(u)) = \alpha u + (\beta-(\alpha \pi^i+\beta \sprodq{v}{v})\sigma^{-1})v+w=\alpha u + \beta'v+w\]
  with $\beta' \in \O^\times$.
\end{proof}

\begin{lemma}\label{dyadic:aik-norm}
 Let $2n+1 = j+e$.
 If $m \in A(j,n)$ is primitive, then $\sprodq{m}{m}\o=\fp^n$.
\end{lemma}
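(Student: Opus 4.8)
The plan is to show that the minimal $\fp$-valuation $n$ of $\sprodq{m}{m}$, which is forced for every $m \in A(j,n)$ by $\norm(A(j,n)) = \fp^n$, is in fact attained exactly on every primitive vector. Since $\sprodq{m}{m} \in \norm(A(j,n)) = \fp^n$ already, it suffices to exclude $\nu_\fp(\sprodq{m}{m}) \ge n+1$. I would argue by contradiction: assuming such a primitive $m$ exists, I derive a contradiction with the normic defect $\mathfrak d_E(1+u_0) = \fp^{e-1}$ of the defining unit $1+u_0$.

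First I would record the determinant. From the Gram matrix $\begin{pmatrix} p^n & \pi^j \\ \bar\pi^j & -u_0 p^{j-n}\end{pmatrix}$ one computes $\det(A(j,n)) = -u_0 p^j - p^j = -(1+u_0)p^j$, reflecting that the space is non-hyperbolic because $1+u_0$ is a non-norm. Since $E$ is a field and $\O$ a discrete valuation ring, a primitive $m$ extends to an $\O$-basis $(m,m')$ of $A(j,n)$. Writing $G$ for the Gram matrix in this basis, the change of basis lies in $\GL_2(\O)$, so $\det(G) = -(1+u_0)p^j\Nr(\delta)$ for some $\delta \in \O^\times$. Expanding $\det(G) = \sprodq{m}{m}\sprodq{m'}{m'} - \Nr(\sprod{m}{m'})$ and rearranging gives
\[
\Nr(\sprod{m}{m'}) = \sprodq{m}{m}\sprodq{m'}{m'} + (1+u_0)p^j\Nr(\delta).
\]

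Next I would compare valuations. Since $\sprodq{m'}{m'} \in \norm(A(j,n)) = \fp^n$ and, by assumption, $\sprodq{m}{m} \in \fp^{n+1}$, the first summand lies in $\fp^{2n+1} = \fp^{j+e}$ by the hypothesis $2n+1 = j+e$, whereas the second has valuation exactly $j$. Hence $\nu_\fp(\Nr(\sprod{m}{m'})) = j$, so $\sprod{m}{m'} \in \FP^j$ and $\beta_1 := \sprod{m}{m'}\pi^{-j} \in \O$ satisfies $\Nr(\sprod{m}{m'}) = p^j\Nr(\beta_1)$. Dividing the identity above by $p^j$ yields
\[
\Nr(\beta_1) = \sprodq{m}{m}\sprodq{m'}{m'}p^{-j} + (1+u_0)\Nr(\delta),
\]
where the first summand lies in $\fp^{(j+e)-j} = \fp^e$. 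Dividing through by the unit $\Nr(\delta)$ gives $1 + u_0 \equiv \Nr(\beta_1\delta^{-1}) \bmod \fp^e$, that is, $\nu_\fp\bigl(1 + u_0 - \Nr(\beta_1\delta^{-1})\bigr) \ge e$.

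This is the contradiction I am after: by definition $\mathfrak d_E(1+u_0) = \bigcap_{\beta}(1+u_0 - \Nr(\beta))\o = \fp^{e-1}$, so each ideal $(1+u_0 - \Nr(\beta))\o$ contains $\fp^{e-1}$, forcing $\nu_\fp(1+u_0 - \Nr(\beta)) \le e-1$ for every $\beta \in E$; applied to $\beta = \beta_1\delta^{-1}$ this contradicts the inequality just obtained. Hence no primitive vector has valuation exceeding $n$, and together with $\sprodq{m}{m} \in \fp^n$ we conclude $\sprodq{m}{m}\o = \fp^n$. The main obstacle is conceptual rather than computational: one must locate the right invariant—here the determinant read modulo norms, combined with the defining normic defect of $u_0$—that turns an unexpectedly large norm of $m$ into a norm approximating $1+u_0$ to precision $\fp^e$, which the very choice of $u_0$ forbids.
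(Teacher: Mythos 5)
Your proof is correct, but it is genuinely different from what the paper does: the paper offers no argument for \Cref{dyadic:aik-norm} at all, it simply cites \cite[9.2 b)]{Jacobowitz1962}, so your argument is a self-contained replacement for that citation. Every step checks out. Primitivity of $m$ together with $\O$ being a discrete valuation ring lets you extend $m$ to an $\O$-basis $(m,m')$, and the Gram determinant in that basis is $-(1+u_0)p^j\Nr(\delta)$ with $\delta\in\O^\times$; under the hypothesis $2n+1=j+e$ and the assumed extra divisibility $\sprodq{m}{m}\in\fp^{n+1}$, the product $\sprodq{m}{m}\sprodq{m'}{m'}$ lands in $\fp^{j+e}$, so $\Nr(\sprod{m}{m'})$ has valuation exactly $j$ (here you tacitly use $\nu_\fp(\Nr(x))=\nu_\FP(x)$, which holds since $E/K$ is ramified, to get $\beta_1=\sprod{m}{m'}\pi^{-j}\in\O$), and dividing by $p^j\Nr(\delta)$ exhibits $1+u_0$ as a norm modulo $\fp^e$. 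Your reading of the normic defect is also the correct one: since $\mathfrak d_E(1+u_0)=\bigcap_{\beta}(1+u_0-\Nr(\beta))\o=\fp^{e-1}$ and an intersection is contained in each of its members, every $\beta\in E$ satisfies $(1+u_0-\Nr(\beta))\o\supseteq\fp^{e-1}$, hence $\nu_\fp(1+u_0-\Nr(\beta))\leq e-1$, contradicting the congruence you derived. As for what each approach buys: the citation keeps the paper short, whereas your determinant-plus-normic-defect argument makes the lemma self-contained, isolates exactly where the hypothesis $2n+1=j+e$ enters (it pushes $\sprodq{m}{m}\sprodq{m'}{m'}$ one step past the precision $\fp^{e-1}$ that the defect of $1+u_0$ permits), and is cleaner than a direct computation of $\sprodq{am+bm'}{am+bm'}$ in the standard basis, where the three terms can all have valuation $n$ and the needed non-cancellation is less transparent.
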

\begin{proof}
See \cite[9.2 b)]{Jacobowitz1962}.
 \end{proof}

  \begin{lemma}\label{dyadic:mm}
  Let $j-i = n-k$.
  Consider $\varphi \in U(L)$ with $\varphi(u) = u$ and
  $\varphi(v) = \gamma u + \delta v + m$ with $\delta, \gamma \in \O$ and $m \in M$.
  If $\gamma \in \FP^{j-i+1}$ and $\delta \equiv 1 \mod \FP$,
  then $\sprodq{m}{m} \in \fp^{n+1}$.
  \end{lemma}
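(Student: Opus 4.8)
The plan is to extract from $\varphi \in U(L)$ two scalar identities and collapse them into a closed formula for $\sprodq{m}{m}$, and then estimate its $\fp$-valuation. First I would use that $\varphi$ fixes $u$ and preserves the form. Evaluating $\sprod{u}{\varphi(v)} = \sprod{u}{v} = \pi^i$ and recalling $\sprodq{u}{u} = p^k$, $\sprod{u}{v} = \pi^i$ and $u \perp M$ gives the relation $\bar\gamma p^k = (1 - \bar\delta)\pi^i$. Next, expanding $\sprodq{\varphi(v)}{\varphi(v)} = \sprodq{v}{v}$ (again using $u,v \perp M$) yields
\[ \sprodq{m}{m} = (1 - \delta\bar\delta)\sprodq{v}{v} - \gamma\bar\gamma p^k - \Tr(\gamma\bar\delta\pi^i). \]
Substituting the first relation and $\pi^i\bar\pi^i = p^i$ into the last two terms, a short computation collapses them to $-p^{i-k}(1-\delta\bar\delta)$, so that
\[ \sprodq{m}{m} = (1-\delta\bar\delta)\bigl(\sprodq{v}{v} - p^{i-k}\bigr). \]
This factorization is the crux, since it isolates the whole obstruction into the single scalar $1 - \delta\bar\delta = 1 - \Nr(\delta) \in K$.

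Then I would estimate the two factors separately. Since $e \geq 2$ and $\sprodq{v}{v} \in \fp^{i-k+e-1}$, the second factor has $\nu_\fp(\sprodq{v}{v} - p^{i-k}) = i - k$ exactly. For the first factor, set $\epsilon = \delta - 1$; the relation above rewrites as $\epsilon = -\gamma p^k\bar\pi^{-i}$, so $\gamma \in \FP^{j-i+1}$ forces $\nu_\FP(\epsilon) \geq j - 2i + 2k + 1 =: N_0$ (in particular $\delta \equiv 1 \bmod \FP$, recovering the stated hypothesis). Writing $1 - \Nr(\delta) = -\Tr(\epsilon) - \Nr(\epsilon)$, the norm term satisfies $\nu_\fp(\Nr(\epsilon)) = \nu_\FP(\epsilon) \geq N_0$, while $\Tr(\epsilon) \in \Tr(\FP^{N_0}) = \fp^{\lfloor (N_0 + e)/2\rfloor}$ by the different formula recalled at the start of the section. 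Combining, $\nu_\fp(\sprodq{m}{m}) \geq \min\bigl(N_0,\ \lfloor(N_0+e)/2\rfloor\bigr) + (i-k)$, and since $N_0 + (i-k) = (j-i)+k+1 = n+1$, the target is reached as soon as $\lfloor(N_0+e)/2\rfloor \geq N_0$, that is, as soon as $N_0 \leq e$.

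The main obstacle is precisely this trace term: the elementary bound $\nu_\fp(\Tr(\epsilon)) \geq \lfloor(N_0+e)/2\rfloor$ is weaker than $N_0$ in general, and if it were attained the lemma would fail. The resolution is to invoke the standing hypothesis of this subsection that $L$ does not split a hyperbolic plane, which gives $2n < j + e$. As $n - k = j - i$ here, this is equivalent to $(j-i) < i + e - 2k$, hence to $N_0 = (j-i) + (2k-i) + 1 < e+1$, i.e. $N_0 \leq e$. Under $N_0 \leq e$ one has $\lfloor(N_0+e)/2\rfloor \geq N_0$, so $\nu_\fp(\Tr(\epsilon)) \geq N_0$ and therefore $\nu_\fp(1 - \Nr(\delta)) \geq N_0$. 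Plugging this into the factorization gives $\nu_\fp(\sprodq{m}{m}) \geq N_0 + (i-k) = n+1$, that is $\sprodq{m}{m} \in \fp^{n+1}$. The genuinely delicate point is thus recognizing that non-hyperbolicity is exactly what bounds $N_0$ by $e$; granting this, the manipulations of $\Tr$ and $\Nr$ are routine, with the case $\sprodq{v}{v}=0$ (that is $P \cong H(i,k)$) covered uniformly by the factorization above, and the going-down step for $\delta$ being the higher-congruence analogue of \Cref{dyadic:goingdown}.
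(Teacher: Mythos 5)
Your proof is correct, but it is organized differently from the paper's. The paper's own proof is a direct term-by-term estimate: it expands
\[0=\sprodq{\varphi(v)}{\varphi(v)} - \sprodq{v}{v} = \gamma \bar \gamma p^k + \Tr(\gamma \bar \delta \pi^i)+ (\delta\bar\delta-1) \sprodq{v}{v}+\sprodq{m}{m}\]
and checks that each of the first three terms lies in $\fp^{n+1}$, using only the stated congruences on $\gamma$ and $\delta$ together with the two numerical consequences $n+1 \leq \lfloor (j+e+1)/2 \rfloor$ (from the standing assumption $2n<j+e$) and $i-k+e>n$ (from $j-i=n-k$); the hypothesis $\varphi(u)=u$ is never used inside the proof. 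You instead exploit $\varphi(u)=u$ through the extra identity $\bar\gamma p^k = (1-\bar\delta)\pi^i$, which lets you eliminate $\gamma$ and collapse the right-hand side into the factorization $\sprodq{m}{m} = (1-\Nr(\delta))\left(\sprodq{v}{v} - p^{i-k}\right)$, and your key inequality $N_0 \leq e$ is, as you note, equivalent to the same standing assumption $2n<j+e$ once $j-i=n-k$ is imposed (this assumption is indeed recorded in the paper's setup of the subsection as a consequence of $L$ not splitting a hyperbolic plane, so both proofs rest on it). Each route buys something: yours shows that the hypothesis $\delta \equiv 1 \bmod \FP$ is redundant, since it follows from $\gamma \in \FP^{j-i+1}$ and the relation coming from $\varphi(u)=u$, and it isolates the entire obstruction in the single scalar $1-\Nr(\delta)$; the paper's version is shorter and proves a formally stronger statement, namely that the conclusion holds for any isometry satisfying the two congruences on $\gamma$ and $\delta$, with no use of $\varphi(u)=u$. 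Both arguments ultimately rest on the same two pillars, the trace formula $\Tr(\FP^a) = \fp^{\lfloor (a+e)/2\rfloor}$ and the non-hyperbolicity bound $2n<j+e$, so the difference is one of packaging rather than of substance, but your packaging is genuinely distinct and the valuation bookkeeping in it is sound.
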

 \begin{proof}
 Note that $2n<j+e$. Then $2n+2 \leq j+e+1$, so $n+1 \leq \lfloor (j+e+1)/2\rfloor$. Further $j-i=n-k$ gives $i-k+e>n$.
  Now
  \[0=\sprodq{\varphi(v)}{\varphi(v)} - \sprodq{v}{v} = \gamma \bar \gamma p^k + \Tr(\gamma \bar \delta \pi^i)+ (\delta\bar\delta-1) \sprodq{v}{v}+\sprodq{m}{m}\]
  yields $0\equiv \sprodq{m}{m} \mod \fp^{n+1}$.
  \end{proof}

\begin{lemma}\label{dyadic:eierlegendewollmichsau}
 Let $\gamma \in \FP^{j-i+1}$, $\delta \equiv 1 \mod \FP$ and $m \in M$.
 Then there are $s\in L$ and $\sigma \in E$
 such that the symmetry $S_{s,\sigma}$ fixes $u$, preserves $L$ and further
 $S_{s,\sigma}(\gamma u + \delta v + m)= \gamma'u + \delta'v + m'$
with $\gamma', \delta' \in \O$, $m' \in M$ and
\[\gamma ' \equiv  - (\bar\pi^{n-k+i} + \sprod{m}{x})\sigma^{-1}\pi^{n-k} \mod \FP^{n-k+1+q}.
\]
where $n-k+i - \nu_\FP(\sigma) =q\in \{0,1\}$.
If further $n-k=j-i$ and $\lfloor (j+e)/2 \rfloor>n$, then $q=0$.
\end{lemma}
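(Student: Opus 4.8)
The plan is to exhibit a single symmetry $S_{s,\sigma}$ with
\[
s = \pi^{n-k}v' + x,
\]
that is, the vector $v'$ spanning $(\O u)^\perp$ together with $M$, scaled by $\pi^{n-k}$, plus the norm generator $x\in M$. Since $\sprod{u}{v'}=0$ by \eqref{dyadic:defv-prime} and $u\perp M$, we get $\sprod{u}{s}=0$, so \emph{any} symmetry built from this $s$ fixes $u$ automatically; this settles the first requirement without further work. Note $s\in L$, as $v'\in P$ and $x\in M$.

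First I would estimate the two relevant invariants of $s$. Using $\sprodq{v'}{v'}\equiv -p^k \bmod \fp^{k+e-1}$ from \eqref{dyadic:defv-prime} one finds that the leading terms of $\sprodq{s}{s}=p^{n-k}\sprodq{v'}{v'}+p^n$ cancel, so $\sprodq{s}{s}\in\fp^{n+e-1}$. Likewise $\sprod{L}{s}=\FP^{i+n-k}$, from $\sprod{L}{v'}=\FP^i$, $\sprod{L}{x}=\scale(M)=\FP^j$ and $i+n-k\le j$. Next I would set $\sigma = \sprodq{s}{s}\rho + \omega$ with $\omega$ skew, so that $\Tr(\sigma)=\sprodq{s}{s}$ as needed for a symmetry. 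The estimate $\nu_\FP(\sprodq{s}{s}\rho)\ge 2n+e-1 > i+n-k$ makes the $\rho$-term negligible, whence $\nu_\FP(\sigma)=\nu_\FP(\omega)$. Because skew elements occupy exactly the valuations $\equiv e \bmod 2$, I choose $\omega$ with $\nu_\FP(\omega)=i+n-k-q$, where $q\in\{0,1\}$ is dictated by the parity of $i+n-k$ against $e$; this gives $i+n-k-\nu_\FP(\sigma)=q$. Then $\sprod{L}{s}=\FP^{i+n-k}\subseteq\FP^{i+n-k-q}=\sigma\O$, so $S_{s,\sigma}\in S(L)$ by \Cref{lem:symmetry}.

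It remains to extract $\gamma'$. From $S_{s,\sigma}(z)=z-\sprod{z}{s}\sigma^{-1}s$ and the fact that the $u$-coordinate of $s$ is $\pi^{n-k}$, the $u$-coordinate of the image is $\gamma'=\gamma-\sprod{z}{s}\sigma^{-1}\pi^{n-k}$. All but two pairings vanish, leaving $\sprod{z}{s}=\delta\bar\pi^{n-k}\sprod{v}{v'}+\sprod{m}{x}$; inserting $\delta\equiv 1 \bmod \FP$ and $\sprod{v}{v'}\equiv\bar\pi^{i}\bmod\FP^{i+2(e-1)}$ yields $\sprod{z}{s}\equiv \bar\pi^{n-k+i}+\sprod{m}{x}\bmod\FP^{n-k+i+1}$. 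Multiplying by $-\sigma^{-1}\pi^{n-k}$, whose valuation is $-i+q$, converts the error $\FP^{n-k+i+1}$ into $\FP^{n-k+1+q}$, and the leftover $\gamma\in\FP^{j-i+1}\subseteq\FP^{n-k+1+q}$ (valid since $j-i\ge n-k+q$) is absorbed. This is the claimed congruence.

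For the addendum, $n-k=j-i$ gives $i+n-k=j$, so $q=0$ is possible exactly when $j\equiv e \bmod 2$, i.e.\ when a skew element of valuation $j$ exists. Here the standing hypothesis that $L$ does not split a hyperbolic plane is essential: were $j\not\equiv e$, the lowest Jordan constituent of $M$ (even rank, of scale $\FP^j$) would contain $H(j,n)$ or $A(j,n)$, and \Cref{dyadic:rels}(a), respectively (b) — whose hypothesis $i+e-2k>j-i$ is supplied by $\lfloor(j+e)/2\rfloor>n$ — would split off a hyperbolic plane $H(j)$ from $L$, a contradiction. Hence $j\equiv e$ and $q=0$. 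I expect the \emph{main obstacle} to be the bookkeeping around $\sigma$: one must fix $\nu_\FP(\sigma)$ to the correct admissible parity while keeping $\nu_\FP(\sigma)\le i+n-k$ so that $L$ is preserved, and then track the error terms to the precise order $\FP^{n-k+1+q}$; the addendum is delicate for the same reason, since it demands the sharper precision $q=0$, available only once the no-splitting hypothesis rules out the wrong parity.
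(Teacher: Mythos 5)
Your construction coincides with the paper's except at the one decisive point: the paper takes $s=\epsilon\pi^{n-k}v'+x$, where $\epsilon\in\O^\times$ is chosen via \Cref{dyadic:norm-mod} so that $\epsilon\bar\epsilon\equiv 1-p^{h-n}\bmod \fp^{e-1}$ (with $h=\lfloor (j+e)/2\rfloor$, in the case $h>n$), whereas you take $\epsilon=1$. This is not bookkeeping. With $\epsilon=1$ the leading terms of $\sprodq{s}{s}$ cancel completely, so $\sprodq{s}{s}\equiv 0 \bmod \fp^{n+e-1}$ and $\nu_\FP(\sprodq{s}{s}\rho)\geq 2n+e-1>j\geq n-k+i$; the valuation of $\sigma=\sprodq{s}{s}\rho+\omega$ is then dictated entirely by the skew element $\omega$, hence is forced to be $\equiv e\bmod 2$. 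Consequently, when $n-k+i=j$ and $j\not\equiv e\bmod 2$, your construction can only reach $q=1$. The unit $\epsilon$ exists precisely to repair this: it pins $\nu_\fp(\sprodq{s}{s})$ to exactly $h$ (one checks $n+e-1\geq h+1$ in this parity case), so that $\nu_\FP(\sprodq{s}{s}\rho)=2h+1-e$, which equals $j$ exactly when $j\not\equiv e\bmod 2$; taking $\omega=0$ then gives $\nu_\FP(\sigma)=j$ and $q=0$ without any appeal to skew elements.

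Your two patches do not close this hole. First, the failure is not confined to the addendum: your main-congruence step ``$\gamma\in\FP^{j-i+1}\subseteq\FP^{n-k+1+q}$, valid since $j-i\geq n-k+q$'' is simply false when $n-k=j-i$ and $q=1$, and your hyperbolic-plane argument is only invoked under the addendum hypothesis $\lfloor(j+e)/2\rfloor>n$; the configuration $n-k=j-i$, $\lfloor(j+e)/2\rfloor=n$, $j\not\equiv e\bmod 2$, where the same parity obstruction occurs, is left with no argument at all. Second, the addendum argument itself has gaps and a logical flaw: (i) it presumes the bottom Jordan constituent of $M$ is an even-rank plane of norm exactly $\fp^n$ (so that $H(j,n)$ or $A(j,n)$ appears), which the lemma's hypotheses do not provide --- one must rule out a normal rank-one constituent and handle constituents of norm $\fp^{n_1}$ with $n_1>n$ (this can be fixed using $n>k$ and $i<2k$, but it is missing); (ii) nothing in the argument actually uses $j\not\equiv e\bmod 2$: given $n-k=j-i$, the hypothesis $i+e-2k>j-i$ of \Cref{dyadic:rels}(b) is equivalent to $2n<j+e$, which is parity-blind, as is \Cref{dyadic:rels}(a). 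So if your argument were carried out in full it would derive a contradiction from the addendum's hypotheses regardless of parity, and the inference ``hence $j\equiv e$ and $q=0$'' is not sound as written. The paper's proof needs no structural input on $M$ and no emptiness claim; reinstating $\epsilon$ and redoing the valuation of $\sprodq{s}{s}$ is the missing idea.
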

\begin{proof}
  Let $h = \lfloor (j+e)/{2} \rfloor$.
  If $h>n$, then by \Cref{dyadic:norm-mod} we can find
  $\epsilon \in \O^\times$ with $\epsilon \bar \epsilon \equiv 1 - p^{h-n} \mod \fp^{e-1}$. By \Cref{dyadic:goingdown} we have $\epsilon \equiv 1 \mod \FP$. If $h=n$, then $j \not \equiv e \mod 2$ and we set $\epsilon =1$.
  Recall the properties of $v'$ in \Cref{dyadic:defv-prime}:
  \[\sprod{u}{v'}=0, \quad
  \sprodq{v'}{v'}\equiv -  p^k \mod \fp^{k+e-1},\quad \sprod{v'}{v}\equiv \pi^i \mod \FP^{i+2(e-1)}\]
  and that  $\sprod{L}{x} = \FP^j$. Now we set
  \[s = \epsilon \pi^{n-k}v' + x.\]
  Then $\sprod{u}{s}=0$, $\sprod{L}{s}=\FP^{n-k+i}+\FP^j = \FP^{n-k+i}$. Further
  \[
  \sprodq{s}{s} = \epsilon \bar \epsilon p^{n-k}\sprodq{v'}{v'} + p^n
   \equiv  -\epsilon \bar \epsilon p^{n} + p^n
   \equiv
    \begin{cases}
     0, & \mbox{if } n=h,\\
     p^h, &  \mbox{else,}
   \end{cases}
\mod \fp^{n+e-1}.
\]

Note that $e\geq 2$ and $2n\geq j$; hence
\[2(n+e-1 - h) = (2n - j) + (e-2) +  \begin{cases}
                                            0 &  \mbox{ if }  j\equiv e \mod 2 \\
                                            1 &\mbox{ if }j \not \equiv e \mod 2.
                                            \end{cases}\]
Thus $ n+e-1 \geq h + 1$, except if $2n=j$ and $e=2$.
But in any case $n+e - 1 \geq h$.
This gives that $\nu_{\fp}(\sprodq{s}{s}) \geq h$.
Recall that $\rho \in E^\times$ is such that $\Tr(\rho)=1$ and $\nu_\FP(\rho) = 1 - e$. Hence $\nu_\FP(\sprodq{s}{s}\rho)\geq 2h + 1-e  \geq j$.

  Since $n-k+i \leq j$, we can find a skew element $\omega \in E$ with
  $\sigma=\sprodq{s}{s}\rho + \omega$ such that
 $n-k+i -\nu_\FP(\sigma)=q' \in \{0,1\}$.
 We show $q' = q$:  If $n - k + i < j$, there is nothing to show. So suppose that $n-k = j-i$. Then $q' = j - \nu_\FP(\sigma)$.

 If $j \equiv e \mod 2$, then we can choose $\omega$ to be zero or of valuation $j$ so that $\sigma$ has valuation $j$ as well and $q'=0$.

 If $j \not \equiv e \mod 2$, then $2(n+e-1-h)\geq 1$. Hence
 $n+e-1 \geq h+1$. In particular the valuation of $\nu_\FP(\sprodq{s}{s}\rho) = 2h+1-e = j$. So we can take $\omega =0$ and then $q' =0$.

 In any case $\sprod{L}{s}\subseteq \sigma \O$, so that $S_{s,\sigma} \in S(L)$.
 We have
  \begin{eqnarray*}
   \gamma' & = & \gamma - \sprod{\gamma u + \delta v + m}{\epsilon \pi^{n-k}v' +x}\sigma^{-1}\epsilon \pi^{n-k}\\
   &=& \gamma - (\delta \bar \epsilon \bar\pi^{n-k}\sprod{v}{v'} + \sprod{m}{x})\sigma^{-1}\epsilon\pi^{n-k}\\
   &\equiv & \gamma - (\delta \bar \epsilon \bar\pi^{n-k+i} + \sprod{m}{x})\sigma^{-1}\epsilon\pi^{n-k} \mod \FP^{n-k+q+2(e-1)}
   \end{eqnarray*}
   Note that $\epsilon \equiv \delta \equiv 1 \mod \FP$ and $2(e-1) \geq 1$. Hence, using that $a\equiv b \mod \FP^g$ and $c \equiv 0 \mod \FP^f$ implies $ac \equiv bc \mod \FP^{f+g}$, we get
   \begin{eqnarray*}
   \phantom{\gamma'}&\equiv & \gamma - (\bar\pi^{n-k+i} +  \sprod{m}{x})\sigma^{-1}\pi^{n-k} \mod \FP^{n-k+1 +q}\phantom{blablalla}\\
   &\equiv & - (\bar\pi^{n-k+i} + \sprod{m}{x})\sigma^{-1}\pi^{n-k} \mod \FP^{n-k+1 +q}
  \end{eqnarray*}
  where the last congruence follows from the assumption $\gamma \in \FP^{j-i+1}$ and
  $n-k+1+q \leq j-i +1$.
\end{proof}

  \begin{lemma}\label{dyadic:mx}
  Let $M = A(j,n) \perp M'$ with $\scale(M')\subseteq \FP^{j+1}$ and $\norm(M')\subseteq \fp^{n+1}$.
  Let $\varphi \in U(L)$ be an isometry with $\varphi(u) =u$ and
  $\varphi(v) = \gamma u + \delta v + m$ with $\gamma, \delta \in \O$, $m \in M$
  and $\sprodq{m}{m} \in \fp^{n+1}$.
  Suppose that $j-i=n-k$ and $\left\lfloor \frac{j+e}{2}\right\rfloor >n > \lfloor \frac{i+e}{2} \rfloor$.
  Then
   \[\sprod{m}{x} \not \equiv \bar\pi^j \mod \FP^{j+1}.\]
  \end{lemma}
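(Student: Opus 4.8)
The plan is to argue by contradiction: I assume $\sprod{m}{x} \equiv \bar\pi^j \bmod \FP^{j+1}$ and derive a constraint on $\varphi$ that is incompatible with the norm form of the binary constituent $A(j,n)$. The first step is to record the two relations forced by $\varphi \in U(L)$ with $\varphi(u) = u$. Since $u$ and $v$ are orthogonal to $M$, pairing $\varphi(u) = u$ against $\varphi(v) = \gamma u + \delta v + m$ and comparing with $\sprod{u}{v} = \pi^i$ gives $\gamma p^k = (1-\delta)\bar\pi^i$; as $2k > i$ this already yields $\delta \equiv 1 \bmod \FP$. Expanding $\sprodq{\varphi(v)}{\varphi(v)} = \sprodq{v}{v}$ and substituting, the cross terms collapse to the identity
\[ \sprodq{m}{m} = (1 - \Nr(\delta))\bigl(\sprodq{v}{v} - p^{i-k}\bigr). \]
Because $\sprodq{v}{v} \in \fp^{i-k+e-1}$ while $\nu_\fp(p^{i-k}) = i-k$, the second factor has valuation exactly $i-k$, so $\nu_\fp(\sprodq{m}{m}) = \nu_\fp(1 - \Nr(\delta)) + (i-k)$ and, to leading order, $\sprodq{m}{m} \equiv -(1-\Nr(\delta))\,p^{i-k}$.

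Next I would use the splitting $M = A(j,n) \perp M'$. Writing $m = ax + bx_2 + m'$ in a standard basis $(x, x_2)$ of $A(j,n)$, with $\sprod{x_2}{x} = \bar\pi^j$ and $\sprodq{x_2}{x_2} = -u_0 p^{j-n}$, the assumption $\sprod{m}{x} = a p^n + b\bar\pi^j \equiv \bar\pi^j \bmod \FP^{j+1}$ forces $b \equiv 1 \bmod \FP$, hence $b \in \O^\times$. Since $\sprodq{m'}{m'} \in \fp^{n+1}$, also $\sprodq{ax+bx_2}{ax+bx_2} \in \fp^{n+1}$; a valuation count of $\Nr(a)p^n$, $\Tr(a\bar b \pi^j)$ and $-\Nr(b) u_0 p^{j-n}$ — using $\lfloor (j+e)/2 \rfloor > n$ to push the trace term into $\fp^{n+1}$ and $j-n=i-k$ to locate the diagonal term — shows that this is possible only if $a \in \FP$. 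Thus $m$ is congruent to $bx_2$ modulo $\FP A(j,n) \perp M'$, and the leading contribution to $\sprodq{m}{m}$ is the diagonal entry $-\Nr(b)u_0 p^{i-k}$.

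Finally I would compare the two descriptions of $\sprodq{m}{m}$. Matching the leading term $-(1-\Nr(\delta))p^{i-k}$ from the identity against $-\Nr(b)u_0 p^{i-k}$ from the decomposition and cancelling $p^{i-k}$ gives $\Nr(\delta) \equiv 1 - \Nr(b)u_0 \pmod{\fp^{e}}$; since $\Nr(b) \equiv 1 \bmod \fp$ and $u_0 \in \fp^{e-1}$, this reads $\Nr(\delta) \equiv 1 - u_0 \pmod{\fp^e}$. This is the contradiction: $\Nr(\delta) \in \Nr(\O^\times)$, whereas $1 - u_0 \notin \Nr(\O^\times)$. Indeed $u_0^2 \in \fp^{2(e-1)} \subseteq \fp^e$ (as $e \geq 2$), so $1 - u_0^2 \equiv 1 \bmod \fp^e$ is a norm, while $1 - u_0 = (1-u_0^2)(1+u_0)^{-1}$ then lies in the same class as $(1+u_0)^{-1}$, which is a non-norm by the defining property $\o^\times = \Nr(\O^\times) \sqcup (1+u_0)\Nr(\O^\times)$ with $\mathfrak d_E(1+u_0) = \fp^{e-1}$; together with the fact that units $\equiv 1 \bmod \fp^e$ are norms, this pins the class of $\Nr(\delta)$ to a non-norm coset.

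I expect the main obstacle to be making the leading‑order comparison of the last paragraph fully rigorous. Away from the extremal case $j + e = 2n+2$ the diagonal term $-\Nr(b)u_0 p^{i-k}$ and the competing contributions $\Nr(a)p^n$, $\Tr(a\bar b\pi^j)$ and $\sprodq{m'}{m'}$ occupy different powers of $\fp$, so one must track $\sprodq{m}{m}$ to precision $\fp^{i-k+e}$ in both expressions and show that, modulo $\Nr(\O^\times)$, the class of $\Nr(\delta)$ is determined by the $u_0$‑term alone. This is exactly where the hypotheses $j - i = n - k$ (aligning the exponents of the two leading terms) and $n > \lfloor (i+e)/2 \rfloor$ (forcing the remaining terms into strictly higher powers) become decisive, and organizing this bookkeeping — together with the congruence $\Nr(\delta) \equiv 1 \bmod \fp$ from $\delta \equiv 1 \bmod \FP$ — is where the real work lies.
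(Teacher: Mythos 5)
Your opening moves are sound: the exact identity $\sprodq{m}{m} = (1-\Nr(\delta))\bigl(\sprodq{v}{v}-p^{i-k}\bigr)$ is correct, and so are the deductions $\delta\equiv 1 \bmod \FP$, $b\equiv 1\bmod\FP$ and $a\in\FP$. The gap is in the final leading-term comparison, and the hypothesis you invoke there actually works in the \emph{opposite} direction from what you claim. The assumption $\lfloor (j+e)/2\rfloor>n$ means $j+e\geq 2n+2$, i.e.\ $\nu_\fp\bigl(\Nr(b)u_0p^{j-n}\bigr)=(e-1)+(j-n)\geq n+1$: it pushes the diagonal $u_0$-term \emph{into} $\fp^{n+1}$, which is exactly the ideal that merely bounds the competing contributions $\Nr(a)p^n$ (valuation possibly exactly $n+1$ when $\nu_\FP(a)=1$) and $\sprodq{m'}{m'}$ (valuation possibly exactly $n+1$). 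So the $u_0$-term is never forced to dominate; if anything it can be strictly smaller than the competitors. Concretely, your congruence $\Nr(\delta)\equiv 1-u_0\pmod{\fp^e}$ requires the competing terms, after division by $\sprodq{v}{v}-p^{i-k}$, to lie in $\fp^e$, i.e.\ $n+1+k-i\geq e$, and this does not follow from the hypotheses: take $e=9$, $i=0$, $k=1$, $j=4$, $n=5$. Then $i<2k<i+e$, $j-i=n-k$, $\lfloor(j+e)/2\rfloor=6>5>4=\lfloor(i+e)/2\rfloor$, so all hypotheses hold, yet $n+1+k-i=7<9=e$, and the diagonal term (valuation $7$) is strictly smaller than the worst-case competitors (valuation $6$). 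Modulo $\fp^e$ the $u_0$-term is then invisible inside the error, and no contradiction with $\Nr(\delta)\in\Nr(\O^\times)$ can be extracted.

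The structural reason the argument cannot be completed as written is that you use the unitarity of $\varphi$ only through $\varphi(v)$ (its norm and its pairing with $u$), and this information is too weak to isolate $u_0$. The paper's proof brings in the image $\varphi(x)$ of the first standard basis vector of $A(j,n)$: since $\varphi(u)=u$, one has $\varphi(x)\in(\O u)^\perp=\O v'\perp M$, and the two additional relations $\sprod{\varphi(v)}{\varphi(x)}=\sprod{v}{x}=0$ and $\sprodq{\varphi(x)}{\varphi(x)}=p^n$, reduced modulo $\FP$, give $\bar b_1+a_{12}\bar a_{21}\equiv 0$ and $1\equiv a_{21}\bar a_{21}(1-a_{12}\bar a_{12})\bmod\fp$, whence $a_{12}\not\equiv 1\bmod\FP$ (i.e.\ your $b\not\equiv 1$) by a pure residue-field computation, with no valuation or norm-class bookkeeping at all. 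To salvage your approach you would either have to restrict to the range $n\geq e-1+i-k$ (not implied by the lemma) or incorporate the constraints coming from $\varphi(x)$ --- which is precisely the paper's route.
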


  \begin{proof}
  Recall that $v'$ is an element satisfying \Cref{dyadic:defv-prime} and that $x \in M$ is such that $\sprodq{x}{x}=p^n$.
  Let $y \in M$ with
  $\sprod{x}{y}=\pi^j$,
  $\sprodq{y}{y}=-u_0p^{j-n}$, so that $\O x \oplus \O y= A(j,n)$. Note that $\lfloor \frac{j+e}{2}\rfloor\geq n+1$
  implies that $\nu_\fp(-u_0p^{j-n})> n$.
  Since $\varphi(u)=u$ and $x \in (\O u)^\perp$, we have $\varphi(x) \in (\O u)^\perp=\O v' \perp M$.
  Hence we find $m',m'' \in M'$ and $b_1,a_{ij} \in \O$ with
  \[\begin{array}{ccccc}
    \varphi(v) &=& \gamma u + \delta v &+a_{11} x &+ a_{12}y+m'\\
    \varphi(x) &=&b_1 \pi^{n-k}v'&+ a_{21} x &+ a_{22}y+m''.\\
  \end{array}\]
  Then $\sprodq{m}{m}\equiv 0 \mod \fp^{n+1}$,
  implies that $a_{11} \equiv 0 \mod \FP$.
  Now $\sprod{u}{v-\varphi(v)}=0$ gives that $\delta \equiv 1 \mod \FP$.
  Then
  $0=\sprod{\varphi(v)}{\varphi(x)}\bar \pi^{-j}\equiv \bar b_1 +a_{12}\bar a_{21} \mod \FP$.
  Moreover we have
  \[1= \sprodq{\varphi(x)}{\varphi(x)}p^{-n}\equiv -b_1 \bar b_1  + a_{21}\bar a_{21} \equiv a_{21} \bar a_{21}(1-a_{12}\bar a_{12})\mod \fp \]
  Now $a_{12}\bar a_{12}\equiv 1 \mod \fp$ leads to a contradiction. In particular
  $a_{12} \not \equiv 1 \mod \FP$.
  As a consequence
  $\sprod{m}{x}\equiv \sprod{a_{12}y}{x}\equiv \bar \pi^j a_{12} \not \not \equiv \bar\pi^j \mod \FP^{j+1}.$
  \end{proof}

Note that the appearance of conditions (a-d) of the following lemma will be explained by the absence of a hyperbolic plane splitting $L$.
\begin{lemma}\label{dyadic:subnormal-plane}
  Recall that $L =  P \perp M$ with $\rk P=2$, $\scale(P)=\FP^i$
  $\norm(P)=\fp^k$, $P \not\cong H(i)$, $i<2k$,
  $\scale(M)=\FP^j\subseteq \FP^{i+1}$, $\norm(M)=\fp^n \subseteq \fp^{k+1}$.
  Suppose that one of the following is true:
  \begin{enumerate}
   \item[(a)] $M=0$,
   \item[(b)] $j - i > n - k$,
   \item[(c)] $j - i = n - k$, $M \cong A(j,n)\perp M'$, $\left\lfloor \frac{j+e}{2}\right\rfloor =n > \lfloor \frac{i+e}{2} \rfloor$,
   \item[(d)] $j - i = n - k$, $M \cong A(j,n)\perp M'$,  $\left\lfloor \frac{j+e}{2}\right\rfloor >n > \lfloor \frac{i+e}{2} \rfloor$,
  \end{enumerate}
 where $M'$ is a lattice with $\scale(M')\subseteq \FP^{j+1}$ and $\norm(M')\subseteq \fp^{n+1}$.
Then $U(L) = S(P)U(M)$.
\end{lemma}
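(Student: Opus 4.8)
The plan is to take an arbitrary $\varphi \in U(L)$ and successively compose it with symmetries until it fixes the plane $P$ pointwise, at which point it lies in $U(M) = U(P^\perp)$. The anchor for the reduction is the vector $u$: it generates the norm, $\sprodq{u}{u}\o = \fp^k = \norm(L)$, and satisfies $\sprod{u}{L} = \FP^i = \scale(L)$, and together with $v$ it spans $P$. The whole argument is driven by first straightening the image of $u$ and then the image of $v$.

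First I would straighten $\varphi(u)$. Applying \Cref{dyadic:u} produces a product of symmetries $\psi_1$ with $\psi_1(u) = \varphi(u)$, so that after replacing $\varphi$ by $\psi_1^{-1}\varphi$ I may assume $\varphi(u) = u$; here one must check that one of the numerical hypotheses of \Cref{dyadic:u} holds in each of the cases (a)--(d), using $i < 2k < i + e$ and, in the edge case $2k+1 = i+e$, the norm information on $v$ together with the norm of primitive vectors of $A(j,n)$ from \Cref{dyadic:aik-norm}. In case (a), where $M = 0$, this essentially finishes the argument: $\varphi$ then preserves $\O v' = (\O u)^\perp$, and a single symmetry fixing $u$ straightens $\varphi(v)$, which is the base case $U(P) = S(P)$.

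With $\varphi(u) = u$ the map preserves $(\O u)^\perp = \O v' \perp M$, and I would write $\varphi(v) = \gamma u + \delta v + m$ with $\gamma,\delta \in \O$ and $m \in M$. From $\sprod{u}{v - \varphi(v)} = 0$ one obtains $\delta \equiv 1 \bmod \FP$, and the isometry relation $\sprodq{\varphi(v)}{\varphi(v)} = \sprodq{v}{v}$ controls the valuations of $\gamma$ and of $\sprodq{m}{m}$ (this is exactly \Cref{dyadic:mm} in the boundary case $j - i = n - k$). The core of the proof is then to kill $\gamma$ and the $M$-component by symmetries fixing $u$. The device is the symmetry of \Cref{dyadic:eierlegendewollmichsau}, built from $s = \epsilon\pi^{n-k}v' + x$: it fixes $u$, preserves $L$, and replaces $\gamma$ by $\gamma' \equiv -(\bar\pi^{n-k+i} + \sprod{m}{x})\sigma^{-1}\pi^{n-k} \bmod \FP^{n-k+1+q}$ with $q \in \{0,1\}$. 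Once $\gamma$ has been driven to a unit times the correct power of $\pi$, a final symmetry fixing $u$ (provided by \Cref{sym:act}) maps $\varphi(v)$ back to $v$; then $\varphi$ fixes $P$ pointwise and lies in $U(M)$, which is the assertion.

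The main obstacle is the bookkeeping in the boundary case $j - i = n - k$ (cases (c) and (d)), where one must guarantee that $\gamma'$ does not land in the valuation range corresponding to $L$ splitting a $\FP^i$-modular hyperbolic plane -- the very configuration excluded by hypothesis. This is precisely what the non-divisibility $\sprod{m}{x} \not\equiv \bar\pi^j \bmod \FP^{j+1}$ of \Cref{dyadic:mx} secures, its proof resting on the structure $M \cong A(j,n) \perp M'$ and the inequalities relating $\lfloor (j+e)/2\rfloor$ and $n$. Choosing the skew element $\omega$ so that $\sigma = \sprodq{s}{s}\rho + \omega$ attains the valuation making $q \in \{0,1\}$, and then tracking everything modulo the exact power $\FP^{j-i+1}$, is the delicate and case-laden heart of the argument; by contrast the remaining cases (a) and (b) follow comparatively directly from the valuation estimates.
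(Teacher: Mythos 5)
Your outline reproduces the paper's proof of this lemma almost step for step: \Cref{dyadic:u} to straighten $\varphi(u)$ (with exactly the hypothesis check you describe, using \Cref{dyadic:aik-norm} applied to $P\cong A(i,k)$ in the edge case $2k+1=i+e$), the decomposition $\varphi'(v)=\gamma u+\delta v+m$ with $\delta\equiv 1 \mod \FP$, the symmetry of \Cref{sym:act} with $s=v-\varphi'(v)$ as the closing move, and the symmetry of \Cref{dyadic:eierlegendewollmichsau} to repair $\gamma$ when that closing symmetry does not preserve $L$. Cases (a), (b) and (d) are attributed correctly. However, there is one conceptual slip and one genuine gap.

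The slip: the criterion to be met is not that $\gamma'$ avoid ``the valuation range corresponding to $L$ splitting a hyperbolic plane''; it is that the closing symmetry preserves $L$, which unwinds to $\sprod{M}{m'}\subseteq\gamma'\pi^i\O$, i.e.\ $\nu_\FP(\gamma')\leq l'-i$ where $\FP^{l'}=\sprod{M}{m'}$. The gap: your treatment of case (c) fails as stated. \Cref{dyadic:mx} assumes $\lfloor (j+e)/2\rfloor>n$, which is exactly what is negated in case (c) (there $\lfloor (j+e)/2\rfloor=n$), so it cannot be invoked; moreover the final clause of \Cref{dyadic:eierlegendewollmichsau} guaranteeing $q=0$ is likewise unavailable in case (c), so even granting the non-congruence $\sprod{m}{x}\not\equiv\bar\pi^{j} \mod \FP^{j+1}$ one only gets $\nu_\FP(\gamma')=j-i+q$ with possibly $q=1$, and $j-i+1\leq l'-i$ requires $l'>j$, which is not automatic. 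The paper closes case (c) by a different mechanism: in that case $2n+1=j+e$, so \Cref{dyadic:aik-norm} applies to the summand $A(j,n)$, and combined with $\sprodq{m}{m}\in\fp^{n+1}$ from \Cref{dyadic:mm} it forces $m\in\FP M+M'$, hence $l>j$, and the same reasoning for $m'$ gives $l'>j$; only then do the valuation estimates close. So \Cref{dyadic:mx} is the key ingredient only in case (d), and your sketch needs this additional argument for case (c).
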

\begin{proof}
   Recall that $x \in M$ is such that $\sprodq{x}{x} =  p^n$.
   Let $\varphi \in U(L)$ and recall that $u, v \in P$ satisfy
   \[\sprodq{u}{u}= p^k, \quad \sprod{u}{v}=\pi^i, \quad \sprodq{v}{v}\in \fp^{i-k+e-1}.\]
   Since $P\not\cong H(i)$, \Cref{dyadic:u} applies and provides $\psi \in S(L)$ with $\psi(u) = \varphi(u)$.
   We continue with $\varphi' = \psi^{-1} \circ \varphi$, which satisfies $\varphi'(u) = u$.

   Write $\varphi'(v) = \gamma u + \delta v + m$ for some $m \in M$.
   Since $u = \varphi'(u)$, we have
   \[-\bar\gamma p^k + (1-\bar \delta)\pi^i=\sprod{u}{v-\varphi'(v)}
  = 0.\]
  Hence $\nu_\FP(1-\bar \delta)=\nu_\FP(\gamma)+2k-i$ and so
  \[\sprod{v}{v-\varphi'(v)}\O=(-\bar \gamma \bar \pi^i+(1-\bar \delta)\sprodq{v}{v})\O=\bar \gamma \bar \pi^i\O.\]
  The symmetry $S_{s,\sigma}$ with $s=v - \varphi'(v)$ and $\sigma = \sprod{v}{v-\varphi'(v)}$ preserves $u$ and maps $v$ to $\varphi'(v)$.
  It preserves $L$ if $\sprod{L}{s}\sigma^{-1} \subseteq \O$, which is the case if
  \begin{equation*}\label{eqn:hikgood}
  \sprod{M}{m}\subseteq \gamma \pi^i\O.
  \end{equation*}
  For $M=0$ this is true and we are done.
  Otherwise we set $\FP^l = \sprod{M}{m}$.
  Then the condition amounts to $\nu_\FP(\gamma) \leq l - i$.
  We continue with $\nu_\FP(\gamma)>l-i\geq j - i$.
  Note that as we saw above, $\sprod{u}{v-\varphi'(v)}=0$ implies $\delta \equiv 1 \mod \FP$.

  Let $S_{s,\sigma}$ be as in \Cref{dyadic:eierlegendewollmichsau}.
  Then $S_{s,\sigma}(\varphi'(v))= \gamma'u + \delta'v + m'$ with $m' \in M$
  and
  $\gamma ' \equiv  - (\bar\pi^{n-k+i} + \sprod{m}{x})\sigma^{-1}\pi^{n-k} \mod \FP^{n-k+1+q}$.
  The proof is complete if we can show that $\nu_\FP(\gamma') \leq l' - i$ where $\sprod{M}{m'}=\FP^{l'}$.

  (b) If $n-k<j-i$, then
  \[\nu_\FP(\gamma')=n-k+i < j \leq \nu_\FP(\sprod{m}{x}).\]
  Hence $\nu_\FP(\gamma')=2(n-k)+i - \nu_\FP(\sigma) \leq n-k+1 \leq j - i$. This concludes case (b).

  (c) Since $\norm(M')\subseteq \fp^{n+1}$ and $\sprodq{m}{m}\in\fp^{n+1}$ by \Cref{dyadic:mm}, \Cref{dyadic:aik-norm} gives $m \in \FP M + M'$. In particular $l> j$. By the same reasoning for $m'$ we obtain that $l' > j$ as well. Then
  \[\nu_\FP(\bar \pi^{j})=j < \nu_\FP(\sprod{m}{x}).\]
  Hence $\nu_\FP(\gamma')=2(n-k)+i - \nu_\FP(\sigma) \leq n-k+1=j-i+1 \leq l' - i$.

  (d) By \Cref{dyadic:mx}
  $ \bar\pi^{j} - \sprod{m}{x}  \not\equiv 0 \mod \FP^{j+1}$.
  Hence the valuation of
    \[\gamma' \equiv  - (\bar\pi^j + \sprod{m}{x})\sigma^{-1}\pi^{n-k} \mod \FP^{n-k+1+q}\]
  is indeed given by $j+(n-k)-\nu_\FP(\sigma)=j-i+q =j-i$.
  \end{proof}

\begin{proof}[Proof of \Cref{dyadic:subnormal}]
 If $M=0$, then this is \Cref{dyadic:subnormal-plane} (a).
 We set $\FP^j = \scale(M)$, $\fp^n=\norm(M)$.
 Then \Cref{dyadic:n<=k} implies that $k<n$. If $j-i > n -k$ then, \Cref{dyadic:subnormal-plane} (b) settles the proposition. Otherwise $j-i\leq n - k$ and \Cref{dyadic:n-k=j-i} applies. It states that we may alter the Jordan decomposition in such a way that $j-i=n-k$. In particular $M$ is subnormal.
 Write $M = M_1 \perp M_2$ with $M_1$ being $\FP^j$-modular and $\scale(M_2) \subsetneq \FP^j$. Since $L$ does not split a hyperbolic plane, $\rk M_1 \leq 2$ and since $M$ is subnormal, $\rk M_1 \geq 2$. Thus $M_1$ is a plane.
 If $M_1 \cong H(j,n)$, then \Cref{dyadic:rels} (a) implies that $L$ splits a hyberbolic plane.
 Thus
 $M_1 \cong A(j,n)$. If $i+e-2k > j-i$, then $L$ splits a hyperbolic plane as well by \Cref{dyadic:rels}~(b). Thus
 $i+e-2k \leq j-i$, then $i+e \leq n+k < 2n$, hence
 \[\left\lfloor \frac{i+e}{2}\right\rfloor < n \leq \left\lfloor \frac{j+e}{2}\right\rfloor.\]
 If $\norm(M_2)\supseteq \fp^n$, then \Cref{dyadic:n<=k} implies that we can split a hyperbolic plane. Thus $\norm(M_2)\subseteq \fp^{n+1}$.
 Now \Cref{dyadic:subnormal-plane} (c) or (d) apply and conclude the proof.
\end{proof}

\subsection{Proof of the generation theorem}
\begin{theorem}
  Let $E/K$ be a ramified quadratic extension of a local dyadic field of characteristic zero and $L$ a hermitian lattice, then the unitary group $U(L)$ is generated by symmetries and rescaled Eichler isometries, that is,
$U(L)=X(L)$.
\end{theorem}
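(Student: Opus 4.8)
The plan is to argue by induction on $\rk L$, distinguishing two cases depending on whether or not $L$ splits a hyperbolic plane. The basic bookkeeping observation is that for any orthogonal summand $N$ of $L$ one has $X(N) \subseteq X(L)$: a symmetry of $N$ extends to a symmetry of $L$ by fixing $N^\perp$, and a rescaled Eichler isometry of $N$ extends as well, since a hyperbolic pair splitting $N$ (together with the chosen vector $y$) still splits $L$. Hence it suffices, in each case, to reduce $U(L)$ to $X(L)\cdot U(N)$ for some orthogonal summand $N$ with $\rk N < \rk L$, and then to invoke the induction hypothesis $U(N) = X(N)$; this is legitimate because $X(N) \subseteq X(L)$ and $S(L) \subseteq X(L)$. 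As $X(L) \subseteq U(L)$ holds by construction, these reductions give $U(L) = X(L)$. The base case $\rk L \leq 1$ is \Cref{dyadic:normal-rk1} with $M = 0$.

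Suppose first that $L$ splits a hyperbolic plane $P = \O u \oplus \O v$. For $\varphi \in U(L)$ the image $\varphi(P)$ again splits $L$, and $(\varphi(u),\varphi(v))$ is a hyperbolic pair with $\sprod{\varphi(u)}{\varphi(v)} = \sprod{u}{v}$. I would then apply \Cref{dyadic:hyperbolic} to obtain $T \in X(L)$ with $T(u) = \varphi(u)$ and $T(v) = \varphi(v)$, so that $T^{-1}\varphi$ fixes $P$ pointwise, i.e.\ lies in $U(P^\perp)$. This yields $U(L) = X(L)\,U(P^\perp)$ with $\rk P^\perp = \rk L - 2$, and the induction hypothesis finishes the case.

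If instead $L$ splits no hyperbolic plane, I would fix a Jordan splitting and take the constituent $L_1$ of minimal scale $\FP^i = \scale(L)$, so that $L = L_1 \perp M$ with $\scale(M) \subseteq \FP^{i+1}$. Feeding $L_1$ into the classification \Cref{dyadic:modular} and using that $L$ has no orthogonal summand isometric to $H(i)$, the $H(i)^r$ part is forced to vanish, leaving exactly three possibilities for $L_1$: a line, a normal plane (where $2k = i$), or a subnormal plane (where $i < 2k$). If $L_1$ is a line, then $\scale(M) \subseteq \FP\,\sprodq{x}{x}$ and \Cref{dyadic:normal-rk1} gives $U(L) = S(L)\,U(M)$. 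If $L_1$ is a normal plane, then, being normal, it splits into two lines $L_1 = \O x \perp \O y$ with $\sprodq{x}{x}\O = \sprodq{y}{y}\O = \FP^i$, and \Cref{dyadic:normal-rk2} gives $U(L) = S(L)\,U(\O y \perp M)$. If $L_1$ is a subnormal plane, then \Cref{dyadic:subnormal} gives $U(L) = S(L)\,U(M)$. In every subcase the remaining summand has strictly smaller rank, so the induction hypothesis completes the argument.

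The genuinely hard work is already contained in the preceding lemmas and propositions; in assembling them the only points demanding care are that the trichotomy for $L_1$ is exhaustive precisely because $L$ splits no $H(i)$, and that each shape of $L_1$ satisfies the hypotheses of exactly one of \Cref{dyadic:normal-rk1}, \Cref{dyadic:normal-rk2}, or \Cref{dyadic:subnormal}. The step I expect to be most delicate to phrase correctly is the hyperbolic case, where one must observe that $\varphi$ carries one splitting hyperbolic pair to another with the same value $\sprod{u}{v}$, so that the transitivity statement \Cref{dyadic:hyperbolic} applies; the rest is bookkeeping on scales and ranks.
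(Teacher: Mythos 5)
Your proposal is correct and follows essentially the same route as the paper's own proof: induction on $\rk L$, with \Cref{dyadic:hyperbolic} handling the case where $L$ splits a hyperbolic plane, and the trichotomy (line, normal plane via splitting into two lines, subnormal plane) fed into \Cref{dyadic:normal-rk1}, \Cref{dyadic:normal-rk2}, and \Cref{dyadic:subnormal} otherwise. You merely make explicit some points the paper leaves implicit, such as the extension property $X(N) \subseteq X(L)$ for orthogonal summands and the justification via \Cref{dyadic:modular} that the trichotomy is exhaustive when no $H(i)$ splits off.
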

\begin{proof}
The proof is by induction on the rank of $L$.
If $L$ is of rank $0$, then the statement is trivially true.
Suppose the theorem is proven for all lattices of rank strictly smaller than $\rk L$.
If $L$ splits a hyperbolic plane $P$, then \Cref{dyadic:hyperbolic} gives $U(L)=X(L)U(P^\perp)$. By induction $U(P^\perp)=X(P^\perp)$ and we are done.

Otherwise $L$ does not split a hyperbolic plane.
Then $L = P \perp M$ with a $\scale(L)$-modular line or plane $P$ and $\scale(M) \subsetneq \scale(L)$.
Now \Cref{dyadic:normal-rk1} and \Cref{dyadic:normal-rk2},  or \Cref{dyadic:subnormal} provide that
$U(L)=S(P)U(P^\perp)$ and the induction proceeds.
\end{proof}

The proof shows that if $L$ does not split a hyperbolic plane, then $U(L)=S(L)$.

\appendix

\section{Generation of unitary groups by symmetries}

It was shown in the main part of the paper that for a hermitian lattice over a two-dimensional étale algebra over a non-Archimedean field of characteristic zero, the unitary group is generated by symmetries and rescaled Eichler isometries.
  Moreover, except in the ramified dyadic case, it was shown that symmetries suffice.
  The aim of the appendix is to analyze the situation in the ramified dyadic case.
  More precisely, the following is shown.

\begin{theorem}\label{symmetriesgenerators}
  Let $E/K$ be a ramified quadratic extension of dyadic local fields of characteristic zero.
  Then the unitary group of every hermitian lattice over $E$ is generated by
  symmetries if and only if the residue field of $E$ is not $\FF_2$.
\end{theorem}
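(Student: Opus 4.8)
The statement is a biconditional, so the plan is to treat the two implications separately. Since $E/K$ is ramified, the residue field of $E$ coincides with that of $K$, and the hypothesis ``residue field not $\FF_2$'' is exactly the condition $q := |\o/\fp| \neq 2$. The guiding observation is that the proof of the generation theorem in \Cref{sec:dyadic} already yields $U(L) = S(L)$ whenever $L$ does not split a hyperbolic plane; rescaled Eichler isometries enter \emph{only} through the hyperbolic-plane reduction \Cref{dyadic:hyperbolic} (via \Cref{dyadic:rot1,dyadic:rot2}). Thus both implications reduce to understanding precisely when such an Eichler isometry lies in $S(L)$.

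\emph{Sufficiency ($q \neq 2$).} I would re-run the induction of \Cref{sec:dyadic} verbatim with $X(L)$ replaced by $S(L)$ throughout, which only requires upgrading \Cref{dyadic:rot1,dyadic:rot2,dyadic:hyperbolic} so that the Eichler isometries they produce are expressed as products of symmetries. Concretely I would prove a ramified-dyadic analogue of \Cref{lem:rotation-reflection}: for a hyperbolic pair $(u,v)$ splitting $L$ with $\sprod u v = \pi^i$, every rescaled Eichler isometry $E_y^\mu$, which satisfies $E_y^\mu(u) = u$ and $E_y^\mu(v) = \mu u + v + y$, is a product of symmetries of $L$ together with an element fixing the plane pointwise. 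The mechanism mirrors the split case: apply a skew symmetry $S_{u,\sigma}$, with $\sigma$ built from the skew element $\omega$, to shift the $u$-coefficient; since $S_{u,\sigma}(v) = v - \bar\pi^i \sigma^{-1} u$, a suitable $\sigma$ renders the new coefficient a unit multiple of $\pi^i$, after which \Cref{sym:act} supplies a single symmetry carrying $v$ to the target while fixing $u$. The hypothesis $q \neq 2$ is used exactly as in \Cref{lem:rotation-reflection}, to pick a residue class avoiding two forbidden values; the extra subtlety over the split case is that $\omega$ has fixed valuation parity ($\nu_\FP(\omega) \equiv e \bmod 2$), so one must first adjust $\omega$ by a unit to reach the valuation forced by $\sprod u v$ and only then exploit the freedom in the residue field.

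\emph{Necessity ($q = 2$).} Here I would produce an explicit counterexample: a lattice $L = H(i) \perp N$ with $N \neq 0$ (forcing $\rk L \geq 3$, since an Eichler isometry internal to a single plane already lies in $S(L)$ by \Cref{dyadic:hyperbolic-plane}), a rescaled Eichler isometry $g = E_y^\mu$ genuinely mixing the plane with $N$, and a homomorphism out of $U(L)$ witnessing $g \notin S(L)$. The two obvious invariants fail: $g$ has trivial determinant and, being unipotent, trivial spinor norm in $K^\times/\Nr(E^\times)$, so neither separates $g$ from $S(L)$. The plan is instead to pass to the residue field: because the involution acts trivially on $\O/\FP$ (\Cref{dyadic:goingdown}), reduction of the hermitian form modulo a suitable power of $\FP$ turns $U(L)$ into an orthogonal group over $\FF_2$, whose Dickson invariant furnishes a candidate map $U(L) \to \ZZ/2$. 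One then analyzes the reduction of the symmetries preserving $L$ and of $g$ under this refined invariant; this should specialize, for $K = \QQ_2$, to Hayakawa's computation~\cite{Hayakawa1968} that Eichler isometries are genuinely necessary.

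\emph{Main obstacle.} The hard part is the necessity direction, and specifically the construction of the detecting homomorphism. Since determinant and spinor norm are both blind to $g$, the invariant must be genuinely refined, and the naive mod-$\FP$ reduction is insufficient: symmetries with non-unit $\sigma$ (unavoidable in the dyadic case, where there is no unit of trace one) and the unipotent $g$ alike reduce to the identity. Pinning down the correct quotient $L/\FP^m L$ on which an Arf- or Dickson-type invariant is well defined, and showing it separates $g$ from the full subgroup $S(L)$ uniformly across all ramified dyadic $E/K$ with $q = 2$, is the crux of the argument.
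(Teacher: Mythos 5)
Your reduction of both implications to the question of when a rescaled Eichler isometry lies in $S(L)$ is indeed the paper's strategy (its appendix proves $X(L)=S(L)$ for residue field $\neq\FF_2$ in \Cref{apptheorem2} and combines this with $U(L)=X(L)$), but your sufficiency argument has a genuine gap, located exactly where the residue-field hypothesis has to do its work. Your mechanism composes $E_w^\mu$ with a symmetry $S_{u,\sigma}$; since $u$ is isotropic, $\sigma$ must be skew, and to make the new coefficient $\mu-\bar\pi^i\sigma^{-1}$ a unit you need $\nu_\FP(\sigma)=i$. The nonzero skew elements of $E$ are exactly $K^\times\eta$ for one fixed skew $\eta$, and $\nu_\FP(K^\times)=2\ZZ$ because $E/K$ is ramified, so every nonzero skew element has valuation $\equiv e\pmod 2$. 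Hence your mechanism is available if and only if $i\equiv e\pmod 2$ --- and in that case it succeeds over \emph{every} residue field (a non-unit plus a unit is a unit; no residue class needs to be avoided): this is part (c) of the appendix's key lemma. In the opposite parity, your proposed repair (``adjust $\omega$ by a unit to reach the valuation forced by $\sprod u v$'') is vacuous: units have valuation zero, and multiplying $\eta$ by an element of $\O^\times\setminus\o^\times$ destroys skewness. So precisely in the case where $\lvert\o/\fp\rvert\neq 2$ becomes relevant, namely $i\not\equiv e\pmod 2$, you have no argument. The paper treats that case by entirely different means: $E_w^\mu\in S(L)$ holds already if $\sprodq{w}{w}\notin\fp\Tr(\FP^i)$ (using that $\Tr(\FP^{i+1})=\fp\Tr(\FP^i)$ when $i+e$ is odd) or if $\sprod{w}{L}\neq\FP^i$, and in the remaining case one factors $E_w\circ S(L)=\bigl(E_{w-w'}\circ S(L)\bigr)\circ\bigl(E_{w'}\circ S(L)\bigr)$ via \Cref{compeich}, with $w'$ constructed (in three separate cases) so that $\sprodq{w'}{w'}\o=\sprodq{w-w'}{w-w'}\o=\Tr(\FP^i)$; the residue-field hypothesis enters only there, through \Cref{fieldsize}.

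For the necessity direction you offer a plan, not a proof: you concede that determinant and spinor norm are blind, that naive reduction modulo $\FP$ fails, and that constructing a detecting Dickson/Arf-type homomorphism is ``the crux'' --- but that construction is never carried out, nor is the lattice pinned down. The paper's necessity argument is essentially a citation: for residue field $\FF_2$ it takes $L=H(i)\perp H(i)$ with $i\in\{0,1\}$ determined by the parity of $e$ and invokes Hayakawa's computation \cite[6.3]{Hayakawa1968}, which works verbatim over any such $K$, not only $\QQ_2$. Note also a structural constraint that a corrected sufficiency analysis hands you for free: since every rescaled Eichler isometry attached to a plane of scale $\FP^i$ with $i\equiv e\pmod 2$ lies in $S(L)$ irrespective of the residue field, and since every hyperbolic plane splitting a modular lattice is modular of the same scale, a counterexample can only involve planes with $i\not\equiv e\pmod 2$; your candidate $L=H(i)\perp N$ has a chance only for that parity. (As printed, the paper's case distinction gives $i\equiv e\pmod 2$, which by part (c) of its own lemma cannot yield a counterexample; the two cases there should be interchanged, matching Hayakawa's examples.)
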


In the following we let $L$ be a hermitian lattice over the ramified quadratic extension $E/K$ of dyadic local fields. Let $u,v\in L$ be isotropic with $\sprod{u}{v}\O=\pO^i$, and $L=(\mc Ou \oplus \mc Ov)\perp M$. Consider a rescaled Eichler isometry $E_w^\mu\in U(L)$ with respect to this hyperbolic plane.


The following three results are \cite[5.4]{Hayakawa1968} and the omitted proofs are straightforward computations.

\begin{lemma}\label{compeich}
If $E^{\mu}_{w}$ and $E^{\mu'}_{w'}$ are two rescaled Eichler isometries of $L$ with respect to the same hyperbolic plane, then $E_{w}^{\mu}\circ E_{w'}^{\mu'}=E_{w+w'}^{\mu+\mu'-\sprod{w'}{w}/\sprod{u}{v}}$.
\end{lemma}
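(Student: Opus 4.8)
The plan is to prove the identity by a direct computation from the explicit defining formula, after first checking that the right-hand side is a genuine rescaled Eichler isometry. Throughout write $\lambda = \sprod{u}{v}$, so $\sprod{v}{u} = \bar\lambda$, and recall the normalizing relations that make the calculation collapse: $u,v$ are isotropic, so $\sprodq{u}{u}=\sprodq{v}{v}=0$, and $w,w' \in P^\perp = (\O u \oplus \O v)^\perp$, so $\sprod{u}{w}=\sprod{v}{w}=\sprod{u}{w'}=\sprod{v}{w'}=0$ together with the conjugate relations.

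First I would verify that $E_{w+w'}^{\mu''}$, with $\mu'' = \mu + \mu' - \sprod{w'}{w}/\lambda$, satisfies the conditions of the definition of a rescaled Eichler isometry. The requirements $w+w' \in P^\perp$ and $\sprod{L}{w+w'} \subseteq \lambda\O$ are immediate from the corresponding properties of $w$ and $w'$. Since $w' \in L$ we have $\sprod{w'}{w} \in \sprod{L}{w} \subseteq \lambda\O$, so $\sprod{w'}{w}/\lambda \in \O$ and hence $\mu'' \in \O$. The one substantive check is the trace condition $\Tr(\mu''\lambda) = -\sprodq{w+w'}{w+w'}$: expanding $\sprodq{w+w'}{w+w'} = \sprodq{w}{w} + \sprodq{w'}{w'} + \Tr(\sprod{w'}{w})$ via $\sprod{w}{w'} = \overline{\sprod{w'}{w}}$, and using $\Tr(\mu''\lambda) = \Tr(\mu\lambda) + \Tr(\mu'\lambda) - \Tr(\sprod{w'}{w})$, the identity follows from the defining relations $\Tr(\mu\lambda) = -\sprodq{w}{w}$ and $\Tr(\mu'\lambda) = -\sprodq{w'}{w'}$ of the two given isometries.

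For the composition, set $z = E_{w'}^{\mu'}(x)$ and first record the two inner products needed to feed $z$ into $E_w^\mu$. Isotropy of $u$ together with $\sprod{w'}{u}=0$ gives $\sprod{z}{u} = \sprod{x}{u}$, while $\sprod{u}{w}=0$ gives
\[ \sprod{z}{w} = \sprod{x}{w} + \tfrac{\sprod{x}{u}}{\bar\lambda}\sprod{w'}{w}. \]
Substituting into $E_w^\mu(z)$, the $w$- and $w'$-contributions combine to $\tfrac{\sprod{x}{u}}{\bar\lambda}(w+w')$, which is the $(w+w')$-term of the target. Collecting the coefficient of $u$ yields
\[ \tfrac{(\mu+\mu')\sprod{x}{u}}{\bar\lambda} - \tfrac{\sprod{x}{w}+\sprod{x}{w'}}{\lambda} - \tfrac{\sprod{x}{u}}{\lambda\bar\lambda}\sprod{w'}{w}, \]
and the last summand equals $-\tfrac{\sprod{w'}{w}}{\lambda}\cdot\tfrac{\sprod{x}{u}}{\bar\lambda}$, i.e. the contribution produced by shifting $\mu+\mu'$ to $\mu''$. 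Comparing with the defining formula for $E_{w+w'}^{\mu''}(x)$ shows that the two maps agree on every $x \in L$.

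I expect no real obstacle beyond careful bookkeeping: the entire content is tracking the coefficient of $u$ and observing that the cross-term $\sprod{w'}{w}$ emerging from $\sprod{z}{w}$ is precisely what turns $\mu+\mu'$ into $\mu+\mu'-\sprod{w'}{w}/\lambda$. The only step that is more than arithmetic is the trace condition for well-definedness, which I would present first so that the composition identity can be read off as an equality of two bona fide rescaled Eichler isometries.
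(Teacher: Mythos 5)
Your computation is correct and is exactly the ``straightforward computation'' that the paper omits (it cites \cite[5.4]{Hayakawa1968} and leaves the verification to the reader): the cross-term $\sprod{w'}{w}/\sprod{u}{v}$ arising from $\sprod{E_{w'}^{\mu'}(x)}{w}$ accounts for the shift in $\mu$, and your well-definedness check (integrality of $\mu''$ and the trace condition) is a welcome addition that the paper's terse treatment glosses over.
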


\begin{lemma}\label{changemu}
  Assume that $S_{u, w}$ is a symmetry and $E_{w}^\mu$ a rescaled Eichler isometry of $L$.
  If $\omega\in E$ is a skew element, then $S_{u,\omega}\circ E_{w}^\mu=E_w^{\mu-\sprod{v}{u}/\omega}$.
\end{lemma}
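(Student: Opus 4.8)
The plan is to verify the claimed identity directly, by evaluating both sides on an arbitrary vector $x \in L$ and comparing the resulting coordinates; there is no conceptual content here beyond careful bookkeeping. I first note that the hypothesis ties together consistently: since $u$ is isotropic, $S_{u,\omega}$ is a symmetry precisely when $\Tr(\omega) = 0 = \sprodq{u}{u}$, which is exactly the skewness of $\omega$. Recall that $S_{u,\omega}$ acts by $z \mapsto z - \sprod{z}{u}\,\omega^{-1}u$, correcting along $u$ only, and that $w \in (\O u \oplus \O v)^\perp \subseteq M$.

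The computation collapses because of two orthogonality relations: $\sprodq{u}{u}=0$ (isotropy of $u$) and $\sprod{w}{u}=0$ (since $w \in (\O u \oplus \O v)^\perp$). Pairing the defining formula of the rescaled Eichler isometry with $u$, I would obtain
\[\sprod{E_w^\mu(x)}{u} = \sprod{x}{u} + \frac{\sprod{x}{u}}{\sprod{v}{u}}\sprod{w}{u} + \left(\frac{\mu\sprod{x}{u}}{\sprod{v}{u}} - \frac{\sprod{x}{w}}{\sprod{u}{v}}\right)\sprodq{u}{u} = \sprod{x}{u},\]
both correction terms vanishing by the two relations.

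Applying $S_{u,\omega}$ then subtracts $\sprod{E_w^\mu(x)}{u}\,\omega^{-1}u = \sprod{x}{u}\,\omega^{-1}u$ from $E_w^\mu(x)$. Since this correction lies along $u$, the $w$-component of $E_w^\mu(x)$ is unchanged and only the coefficient of $u$ is affected, becoming
\[\frac{\mu\sprod{x}{u}}{\sprod{v}{u}} - \frac{\sprod{x}{w}}{\sprod{u}{v}} - \sprod{x}{u}\,\omega^{-1}.\]
Rewriting $-\sprod{x}{u}\,\omega^{-1} = \frac{\sprod{x}{u}}{\sprod{v}{u}}\bigl(-\sprod{v}{u}\,\omega^{-1}\bigr)$ lets me absorb the new term into $\mu$, so the coefficient equals $\frac{\mu'\sprod{x}{u}}{\sprod{v}{u}} - \frac{\sprod{x}{w}}{\sprod{u}{v}}$ with $\mu' = \mu - \sprod{v}{u}/\omega$. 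This is exactly the $u$-coefficient of $E_w^{\mu'}(x)$, while the $x$- and $w$-components already agree, which establishes the identity.

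Finally, for the right-hand side to be a bona fide rescaled Eichler isometry I would confirm that $\mu'$ still satisfies $\Tr(\mu'\sprod{u}{v}) = -\sprodq{w}{w}$. As $E_w^\mu$ already gives $\Tr(\mu\sprod{u}{v}) = -\sprodq{w}{w}$, it suffices that $\Tr\bigl(\sprod{v}{u}\sprod{u}{v}/\omega\bigr) = 0$; this holds because $\sprod{v}{u}\sprod{u}{v} = \Nr(\sprod{u}{v}) \in K$ is fixed by conjugation, while $\omega^{-1}$ is skew (the inverse of a skew element is skew), so $\Tr(\omega^{-1}) = 0$. I expect the only real pitfall to be sign and conjugation bookkeeping between $\sprod{u}{v}$ and $\sprod{v}{u}$; there is no substantive obstacle, which is why the paper classifies this as a straightforward computation.
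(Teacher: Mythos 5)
Your computation is correct and is precisely the ``straightforward computation'' that the paper omits (citing Hayakawa): pairing $E_w^\mu(x)$ with $u$ kills both correction terms via $\sprod{w}{u}=0$ and $\sprodq{u}{u}=0$, so $S_{u,\omega}$ only shifts the $u$-coefficient by $-\sprod{x}{u}\omega^{-1}$, which is absorbed into $\mu$ as $\mu-\sprod{v}{u}/\omega$. Your closing check that $\mu'=\mu-\sprod{v}{u}/\omega$ still satisfies the trace condition (because $\Nr(\sprod{u}{v})\omega^{-1}$ is skew) is a sensible addition and matches how the lemma is used in the paper.
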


\begin{lemma}\label{unitirr}
  If $E_w^\mu$ and $E_w^{\mu'}$ are two rescaled Eichler isometries of $L$ with respect to the same hyperbolic plane, $E_w^{\mu'}\in S(L)\circ E_w^\mu$.
\end{lemma}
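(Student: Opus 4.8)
The plan is to produce a single symmetry $S\in S(L)$ with $S\circ E_w^\mu=E_w^{\mu'}$. The key tool is \Cref{changemu}, which says that for a skew element $\omega$ the symmetry $S_{u,\omega}$ shifts the superscript via $S_{u,\omega}\circ E_w^\mu=E_w^{\mu-\sprod{v}{u}/\omega}$. Thus the whole problem reduces to exhibiting a skew $\omega$ with (i) $\mu-\sprod{v}{u}/\omega=\mu'$ and (ii) $S_{u,\omega}$ preserving $L$. If $\mu=\mu'$ the two isometries coincide and there is nothing to prove, so I assume $\mu\neq\mu'$ and set $\omega=\sprod{v}{u}/(\mu-\mu')$, which satisfies (i) by construction.

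The first step I would carry out is to verify that $\omega$ is skew, and this is where the defining constraint of rescaled Eichler isometries enters. Since $E_w^\mu$ and $E_w^{\mu'}$ are taken with respect to the same $w$, the condition $\Tr(\mu\sprod{u}{v})=-\sprodq{w}{w}=\Tr(\mu'\sprod{u}{v})$ forces $\Tr\!\big((\mu-\mu')\sprod{u}{v}\big)=0$; that is, $\omega_0:=(\mu-\mu')\sprod{u}{v}$ is skew. Writing $\omega=\sprod{v}{u}\sprod{u}{v}\,\omega_0^{-1}=\Nr(\sprod{u}{v})\,\omega_0^{-1}$ and using that the inverse of a skew element is again skew while multiplication by $\Nr(\sprod{u}{v})\in K$ preserves skewness, I conclude $\Tr(\omega)=0$. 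In particular, since $u$ is isotropic we have $\sprodq{u}{u}=0=\Tr(\omega)$, so $S_{u,\omega}$ is a genuine symmetry of the ambient space.

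It then remains to check (ii), that $S_{u,\omega}$ preserves $L$, for which \Cref{lem:symmetry} requires $\sprod{L}{u}\subseteq\omega\O$. Because $u$ is isotropic and orthogonal to $M$, one computes directly that $\sprod{L}{u}=\sprod{v}{u}\O=\FP^i$, while $\nu_\FP(\omega)=\nu_\FP(\Nr(\sprod{u}{v}))-\nu_\FP(\omega_0)=2i-\big(i+\nu_\FP(\mu-\mu')\big)=i-\nu_\FP(\mu-\mu')\leq i$, the inequality holding because $\mu-\mu'\in\O$. Hence $\omega\O\supseteq\FP^i=\sprod{L}{u}$ and $S_{u,\omega}\in S(L)$. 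Applying \Cref{changemu} gives $S_{u,\omega}\circ E_w^\mu=E_w^{\mu-\sprod{v}{u}/\omega}=E_w^{\mu'}$, whence $E_w^{\mu'}\in S(L)\circ E_w^\mu$. The only real content is the skewness observation in the second step; once $(\mu-\mu')\sprod{u}{v}$ is recognized as skew the remaining verifications are a routine valuation count, so I do not anticipate a genuine obstacle.
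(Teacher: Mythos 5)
Your proof is correct and takes essentially the same route as the paper: both exploit the trace condition to see that $(\mu-\mu')\sprod{u}{v}$ is skew, pass to the skew element $\Nr(\sprod{u}{v})\bigl((\mu-\mu')\sprod{u}{v}\bigr)^{-1}$ (the paper's $\omega'$, up to sign), and conclude via \Cref{changemu}. Your valuation count $\nu_\FP(\omega)=i-\nu_\FP(\mu-\mu')\leq i$, showing that the symmetry actually preserves $L$, makes explicit a verification the paper's proof leaves implicit.
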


\begin{proof}
  Since $\Tr(\mu\sprod{u}{v})=-\sprodq{w}{w} =\Tr(\mu'\sprod{u}{v})$, we have $\mu'-\mu=\omega{\sprod{u}{v}}^{-1}$ for a skew element $\omega \in E$.
  Since the claim trivially holds if $\mu = \mu'$, we may assume that $\mu \neq \mu'$ and hence $\omega \neq 0$.
  Note that also $\omega'={\Nr(\sprod{v}{u})}{\omega}^{-1}$ is a skew element. The claim follows from \Cref{changemu}.
\end{proof}

Note that $S(L)$ is a normal subgroup of $U(L)$. For if $S_{s, \sigma}$ is a symmetry of $L$ and $f \in U(L)$ then one easily verifies $f\circ S_{s,\sigma}=S_{f(s),\sigma}\circ f$. By \Cref{unitirr}, two rescaled Eichler isometries $E_w^{\mu}$ and $E_w^{\mu'}$ of $L$ have the same class in $U(L)/S(L)$, which we will denote by $E_w\circ S(L)$. In particular, we say that $E_w\circ S(L)$ \textit{exists} if there is an element $\mu\in\O$ such that $E_w^\mu$ is a rescaled Eichler isometry of $L$.

\begin{lemma}\label{resceichlerexists}
  Let $w \in M$ with $\sprod{w}{L}\subseteq \mathfrak{P}^i=\sprod{u}{v}\O$. Then
  $E_w\circ S(L)$ exists if and only if $\sprodq{w}{w}\in\Tr(\mathfrak P^i)$.
\end{lemma}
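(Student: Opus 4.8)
The plan is to unwind the definition of a rescaled Eichler isometry and reduce the existence of $E_w\circ S(L)$ to a single trace condition. By hypothesis $w\in M=P^\perp$ and $\sprod{w}{L}\subseteq\sprod{u}{v}\O=\pO^i$, so the two constraints on the vector in the definition of a rescaled Eichler isometry are already met. What remains is the existence of an exponent $\mu\in\O$ with $\Tr(\mu\sprod{u}{v})=-\sprodq{w}{w}$. Hence $E_w\circ S(L)$ exists precisely when such a $\mu$ can be found, and the whole lemma becomes the statement that this is solvable if and only if $\sprodq{w}{w}\in\Tr(\pO^i)$.

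Next I would identify the set of attainable trace values. As $\mu$ ranges over $\O$, the product $\mu\sprod{u}{v}$ ranges over the $\O$-module $\sprod{u}{v}\O$, which by assumption is exactly $\pO^i$. Applying the $\o$-linear trace map $\Tr\colon E\to K$, the set $\{\Tr(\mu\sprod{u}{v})\mid\mu\in\O\}$ is therefore equal to $\Tr(\pO^i)$. In particular a solution $\mu$ to $\Tr(\mu\sprod{u}{v})=-\sprodq{w}{w}$ exists if and only if $-\sprodq{w}{w}\in\Tr(\pO^i)$.

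Finally I would dispose of the sign. Since $\Tr(\pO^i)$ is a fractional $\o$-ideal---indeed $\Tr(\pO^i)=\fp^{\lfloor(i+e)/2\rfloor}$ by the formula recorded in the preliminaries---it is closed under negation, so $-\sprodq{w}{w}\in\Tr(\pO^i)$ is equivalent to $\sprodq{w}{w}\in\Tr(\pO^i)$. Combining the three steps yields the asserted equivalence.

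The only point requiring a moment of care---though it is still routine---is the claim that every value of $\Tr(\mu\sprod{u}{v})$ is realized, i.e.\ that $\{\mu\sprod{u}{v}\mid\mu\in\O\}$ is all of $\pO^i$ rather than a proper subset; this is immediate because $\sprod{u}{v}$ is by hypothesis a generator of the ideal $\pO^i$. No genuine obstacle arises beyond carefully matching the definition of the rescaled Eichler isometry against the stated hypotheses.
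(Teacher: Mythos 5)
Your proof is correct and follows essentially the same route as the paper: the forward direction is immediate from the defining equation $\Tr(\mu\sprod{u}{v})=-\sprodq{w}{w}$, and the converse uses exactly the paper's observation that since $\sprod{u}{v}$ generates $\FP^i$, every element of $\Tr(\FP^i)$ is realized as $\Tr(\mu\sprod{u}{v})$ for some $\mu\in\O$ (the paper writes this as $\mu=\alpha\sprod{u}{v}^{-1}$), together with the fact that the fractional ideal $\Tr(\FP^i)$ absorbs the sign.
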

\begin{proof}
If $E_w\circ S(L)$ exists, say $E_w^\mu$ is a rescaled Eichler isometry of $L$, we have $-\sprodq{w}{w}=\Tr(\mu\sprod{u}{v})\in \Tr(\mathfrak P^i)$.

Conversely assume that $\sprodq{w}{w}\in\Tr(\mathfrak P^i)$, say $-\sprodq{w} w=\Tr(\alpha)$ with $\alpha\in\mathfrak P^i$. Then the element $\mu={\alpha}{\sprod{u}{v}}^{-1}\in\O$ is such that $E^\mu_w$ is a rescaled Eichler isometry of $L$.
\end{proof}

The subsequent lemma generalizes \cite[5.1, 5.2, 5.7]{Hayakawa1968}. Recall from the introduction of Section 5 of the main paper that there exists $e\in\ZZ$, only depending on $E/K$, with the following properties:

\begin{itemize}
\item $\Tr(\mathfrak P^i)=\mathfrak p^{\floor{(i+e)/2}}$ for all $i\in\ZZ$,
\item if $e$ is odd, then there exists a skew prime $\eta\in\mc O$,
\item if $e$ is even, then there exists a skew unit $\eta\in\mc O$.
\end{itemize}

\begin{lemma}
  For a rescaled Eichler isometry $E_w^\mu$ of $L$ we have $E_w^\mu\in S(L)$ if one
  of the following holds:

  \begin{enumerate}
  \item[(a)] $\mu\in \O^\times$,
\item[(b)] $\mu\in\mathfrak P^2$ and $E_w^{\mu'}\in S(L)$ for all $\mu'\notin\mathfrak P^2$ with $-\sprodq{w}{w} =\Tr(\mu\sprod{u}{v})$,
  \item[(c)] $i\equiv e\mod 2$,\label{unitprime}
  \item[(d)] $\sprod{w}{w}\notin\mathfrak p\Tr(\mathfrak P^i)$,\label{wnorm}
  \item[(e)] $\sprod{w}{L}\ne\mathfrak P^i$.\label{wscale}
  \end{enumerate}
\end{lemma}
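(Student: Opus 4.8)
The plan is to reduce every case to a single explicit factorization of a rescaled Eichler isometry into two symmetries, and then to control integrality by one valuation count. First I would establish the identity (valid whenever $\mu\neq 0$)
\[
  E_w^\mu = S_{-\mu u - w,\,-\bar\mu\sprod{v}{u}}\circ S_{w,\,-\mu\sprod{u}{v}},
\]
by checking that both sides agree on $u$, on $v$, and on $M$. The right factor is the symmetry supported on $M$ reflecting in $w$ (its action on $x\in M$ computes to $x+\sprod{x}{w}\mu^{-1}\sprod{u}{v}^{-1}w$), while the left factor fixes $u$ and sends $v$ to $E_w^\mu(v)=\mu u+v+w$. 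That these are genuine symmetries follows from $\Tr(-\mu\sprod{u}{v})=\sprodq{w}{w}=\sprodq{-\mu u-w}{-\mu u-w}$, using the defining relation $\Tr(\mu\sprod{u}{v})=-\sprodq{w}{w}$. By \Cref{lem:symmetry}, both factors lie in $U(L)$ exactly when $\sprod{L}{w}\subseteq\mu\sprod{u}{v}\O$, that is, precisely when
\[
  \nu_\FP(\sprod{w}{L})\ \geq\ i+\nu_\FP(\mu),
\]
where one uses $\sprod{L}{u}=\FP^i$ and $\sprod{w}{L}\subseteq\FP^i$.

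With this in hand the first three cases fall out. Case (a) is immediate: $\mu\in\O^\times$ gives $\nu_\FP(\mu)=0$, and $\sprod{w}{L}\subseteq\FP^i$ already yields the integrality inequality. Case (b) is exactly \Cref{unitirr}, which says the coset $E_wS(L)$ is independent of the chosen parameter. For (c), I would note that skew elements have $\FP$-valuation $\equiv e\bmod 2$ (scale the skew $\eta$ by powers of a prime of $\o$), so when $i\equiv e$ there is a skew $\omega$ with $\nu_\FP(\omega)=i$; adding $\omega$ to $\mu\sprod{u}{v}$ leaves the trace — hence, by \Cref{resceichlerexists}, the validity — unchanged while turning the parameter into a unit, and \Cref{unitirr} (or \Cref{changemu} directly) reduces (c) to (a).

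Cases (d) and (e) are the delicate ones. For (d) I would split on the parity of $i-e$: if $i\equiv e$ apply (c); otherwise $\sprodq{w}{w}\in\Tr(\FP^i)=\fp^{\lfloor(i+e)/2\rfloor}$ together with $\sprodq{w}{w}\notin\fp\,\Tr(\FP^i)$ pins down $\nu_\fp(\sprodq{w}{w})=\lfloor(i+e)/2\rfloor$, and since $i+e$ is odd a valuation count on $\Tr(\mu\sprod{u}{v})=-\sprodq{w}{w}$ forces $\nu_\FP(\mu)=0$, so again (a) applies. For (e), writing $\sprod{w}{L}=\FP^l$ with $l>i$, I would use \Cref{unitirr} to replace $\mu$ by a valid parameter $\mu'$ of small valuation: adding to $\mu\sprod{u}{v}$ a skew element whose valuation lies in the interval $[i,l]$ — such an element exists because $l>i$ forces the interval to contain an integer $\equiv e\bmod 2$ — lowers the valuation into range, producing $\mu'$ with $\nu_\FP(\mu')\leq l-i$, after which the factorization applies to $E_w^{\mu'}$.

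The main obstacle I anticipate is the integrality bookkeeping: verifying the single inequality $\nu_\FP(\sprod{w}{L})\geq i+\nu_\FP(\mu)$ and, for (e), confirming that a valid parameter of sufficiently small valuation always exists. Both hinge on the parity restriction that skew elements have $\FP$-valuation congruent to $e$, which is exactly the arithmetic input distinguishing the cases and ultimately explaining why, in the residue-field-$\FF_2$ situation, none of (a)--(e) can be forced.
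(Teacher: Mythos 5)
Your factorization is the paper's own: since $S_{s,\sigma}=S_{-s,\sigma}$, your $S_{-\mu u-w,\,-\bar\mu\sprod{v}{u}}\circ S_{w,\,-\mu\sprod{u}{v}}$ is exactly the decomposition $E_w^\mu=S_1\circ S_2$ used there, and your integrality criterion $\nu_\FP(\sprod{w}{L})\geq i+\nu_\FP(\mu)$ is the paper's application of \Cref{lem:symmetry}. (One quibble: \Cref{lem:symmetry} is only a sufficient condition, so ``exactly when'' should be ``whenever''; since you only use the sufficient direction, this is harmless.) Your cases (a), (c), (d) run exactly as in the paper. Your case (e) is a correct minor variant: writing $\sprod{w}{L}=\FP^l$ with $l>i$, you shift $\mu$ by a skew element of valuation in $[i,l]$ to obtain a valid parameter $\mu'$ with $\nu_\FP(\mu')\leq l-i$ and then apply the factorization, whereas the paper observes that $\sprod{w}{L}\subseteq\FP^{i+1}\subseteq\mu\FP^i$ already gives $S_2\in U(L)$ when $\mu\notin\FP^2$, and reduces the remaining case $\mu\in\FP^2$ to (b).

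The genuine gap is case (b). ``Case (b) is exactly \Cref{unitirr}'' is not a proof: \Cref{unitirr} only says that all valid parameters for a fixed $w$ yield the same coset modulo $S(L)$. To combine this with the hypothesis of (b) you must first exhibit at least one valid parameter outside $\FP^2$, i.e.\ some $\mu'\in\O\setminus\FP^2$ with $\Tr(\mu'\sprod{u}{v})=-\sprodq{w}{w}$; if no such $\mu'$ existed, the hypothesis of (b) would be vacuous and nothing could be concluded. This existence is the actual content of the step, and it needs the skew-element argument you deploy in (c) and (e): valid parameters are precisely those of the form $\mu+\omega\sprod{u}{v}^{-1}$ with $\omega$ skew, the valuations of nonzero skew elements are exactly the integers $\equiv e\bmod 2$, so one can choose $\nu_\FP\bigl(\omega\sprod{u}{v}^{-1}\bigr)\in\{0,1\}$ according to the parity of $i-e$, and since $\mu\in\FP^2$ the resulting sum lies outside $\FP^2$. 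This is exactly why the paper proves (b) and (c) in a single argument. With that one step supplied, your proof is complete and essentially identical to the paper's.
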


\begin{proof}
For $\mu\ne0$, define the symmetries $S_1=S_{\mu u+w,-\overline\mu\sprod{v}{u}}$, $S_2=S_{w,-\mu\sprod{u}{v}}$ of $L$. A quick calculation yields $E_{w}^{\mu}=S_1\circ S_2$. Thus $E_w^\mu\in S(L)$ if and only if $S_1\in U(L)$, if and only if $S_2\in U(L)$.\par
  (a): By definition 
$\sprod{\mu u+w}{L}=\sprod{u}{L}=\mathfrak P^i=-\overline\mu\sprod{v}{u}\mc O$, whence $S_1\in U(L)$.

(b),(c): Let $\omega \in E$ be a skew element, which will be specified below, and set $\mu'=\mu+\omega{\sprod{u}{v}}^{-1}$. We have $\Tr(\mu'\sprod{u}{v})=\Tr(\mu\sprod{u}{v}+\omega)=-\sprodq{w}{w}$.

If the condition of (c) is satisfied, by choosing $\omega\in\eta K^\times$ we can achieve $\omega{\sprod{u}{v}}^{-1} \in\O^\times$ for $\mu\notin\O^\times$ and $\omega{\sprod{u}{v}}^{-1}\in\mathfrak P^2$ for $\mu\in\O^\times$. Hence $\mu'\in\O^\times$ and $E_w^{\mu'}\in S(L)$ by (a).

Now assume that the condition of (c) is not satisfied. Again choosing $\omega\in\eta K^\times$, we can achieve only $\mu'\notin\mathfrak P^2$. Here assumption (b) yields $E_w^{\mu'}\in S(L)$.

  In either case, $E_w\circ S(L)=S(L)$, and so $E_w^\mu\in S(L)$, too.

  (d): Assume $\mu\in\mathfrak P$. Thus $\mu\sprod{u}{v}\in\mathfrak P^{i+1}$ and
  \[-\sprodq{w}{w}=\Tr(\mu\sprod{u}{v})\in\Tr(\mathfrak P^{i+1}).\]
  By (c) we may assume that $i+e$ is odd. Thus
  \[\sprodq{w}{w}\in\Tr(\mathfrak P^{i+1})=\mathfrak p^{\floor{(i+e+1)/2}}=\mathfrak p^{\floor{(i+e)/2} + 1}=\mathfrak p\Tr(\mathfrak P^i).\]
  This contradicts the assumption and so $\mu\notin\mathfrak P$. The claim follows by (a).\par
  (e): Since by definition $\sprod{w}L\subseteq \FP^i$ we obtain $\sprod{w}{L}\subseteq\mathfrak P^{i+1}$. Suppose $\mu\notin\mathfrak P^2$. Hence $\sprod{w}{L}\subseteq\mathfrak P^{i+1}\subseteq\mu\mathfrak P^i=-\mu\sprod{u}{v}\mc O$ and so $S_2\in U(L)$. Now for $\mu\in\mathfrak P^2$ the claim follows from (b).
\end{proof}

To finish the proof, we will need the additional assumption that the residue field contains more than two elements. We generalize the argument from \cite[5.6]{Hayakawa1968}.

\begin{lemma}\label{fieldsize}
  Assume that $\lvert \o/\fp \rvert \geq 4$. Then for any $\gamma,\delta\in\o$, there exists an element $\alpha\in\o^\times$ such that $\alpha+\gamma,\alpha+\delta\in\o^\times$.
\end{lemma}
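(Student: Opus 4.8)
The plan is to pass to the residue field and count. Write $k = \o/\fp$ for the residue field and $\overline{\phantom{x}} \colon \o \to k$ for the reduction map. The key observation I would start from is that an element $x \in \o$ lies in $\o^\times$ if and only if its reduction $\bar x \in k$ is nonzero; this is the defining property of the valuation ring of a non-Archimedean local field. Consequently, the three requirements $\alpha \in \o^\times$, $\alpha + \gamma \in \o^\times$ and $\alpha + \delta \in \o^\times$ translate, after reduction, into the three conditions
\[
\bar\alpha \neq 0, \qquad \bar\alpha \neq -\bar\gamma, \qquad \bar\alpha \neq -\bar\delta
\]
in the field $k$.

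Next I would observe that these conditions exclude at most three elements of $k$, namely $0$, $-\bar\gamma$ and $-\bar\delta$ (possibly fewer, if some of them coincide). By hypothesis $\lvert k \rvert = \lvert \o/\fp \rvert \geq 4 > 3$, so there remains at least one class $c \in k$ avoiding all three forbidden values. I would then choose any lift $\alpha \in \o$ with $\bar\alpha = c$. Since $c \neq 0$ we get $\alpha \in \o^\times$, and since $c \neq -\bar\gamma$ and $c \neq -\bar\delta$ we get $\overline{\alpha + \gamma} = c + \bar\gamma \neq 0$ and $\overline{\alpha + \delta} = c + \bar\delta \neq 0$, so that $\alpha + \gamma$ and $\alpha + \delta$ are units as well.

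This is a pure pigeonhole argument in the finite field $k$, so I do not expect any genuine obstacle; the only point needing a word of care is that the condition $\alpha \in \o^\times$ is itself one of the three exclusions (corresponding to the value $0$), so all three constraints are handled uniformly and the bound $\lvert k\rvert \geq 4$ is exactly what is needed to guarantee a surviving class. The hypothesis is sharp: for $\lvert k \rvert = 2$ or $3$ the three forbidden classes can exhaust $k$, which is precisely why the residue field $\FF_2$ (and the interplay with $\FF_3$) must be treated separately in the surrounding argument.
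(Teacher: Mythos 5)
Your proof is correct and is essentially the paper's own argument: reduce modulo $\fp$, observe that the three unit conditions exclude at most the three residues $\bar 0$, $-\bar\gamma$, $-\bar\delta$, and use $\lvert \o/\fp\rvert \geq 4$ to find a surviving class to lift. (One incidental remark: your aside about $\FF_3$ is moot here, since $K$ is dyadic and so $\o/\fp$ has characteristic $2$, making ``not $\FF_2$'' equivalent to $\lvert\o/\fp\rvert\geq 4$.)
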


\begin{proof}
  Since $\bar x+\bar \gamma = \bar 0$, $\bar x+ \bar \delta = \bar 0$, $\bar x = \bar 0$, are linear equations over $\o/\fp$, there are $\lvert \o/\fp \rvert -3$ elements of $\o/\fp$ which are no root of either one. Thus by assumption there exists such an element $\bar \alpha \in \o/\fp$. Thus the elements $\alpha,\alpha+\gamma,\alpha+\delta$ are units.
\end{proof}

\begin{theorem}\label{apptheorem2}
  Let $E/K$ be a ramified, quadratic dyadic extension. If the residue field of $E$ (and $K$) is not $\Fi_2$, then $X(L)=S(L)$.
\end{theorem}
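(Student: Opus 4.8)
The plan is to prove the statement by showing that \emph{every} rescaled Eichler isometry $E_w^\mu$ of $L$ already lies in $S(L)$. Since $X(L)$ is by definition generated by $S(L)$ together with the rescaled Eichler isometries, this yields $X(L)=S(L)$ at once. So fix $E_w^\mu$ with respect to a hyperbolic pair $(u,v)$ with $\sprod{u}{v}\O=\FP^i$, where $L=(\O u\oplus\O v)\perp M$, $w\in M$, $\sprod{w}{L}\subseteq\FP^i$ and $\Tr(\mu\sprod{u}{v})=-\sprodq{w}{w}$. Throughout I would use that $E/K$ is ramified, so the residue field of $E$ equals $\o/\fp$, and that the hypothesis ``residue field not $\FF_2$'' means exactly $\lvert\o/\fp\rvert\geq 4$, the hypothesis of \Cref{fieldsize}.

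First I would dispose of the cases handled by the preceding lemma. Its parts (a), (c), (d), (e) give $E_w^\mu\in S(L)$ whenever $\mu\in\O^\times$, or $i\equiv e\bmod 2$, or $\sprodq{w}{w}\notin\fp\Tr(\FP^i)$, or $\sprod{w}{L}\neq\FP^i$. Moreover, part (b) together with \Cref{unitirr}---which shows that $E_w^\mu$ and $E_w^{\mu'}$ represent the same class $E_w\circ S(L)$ in $U(L)/S(L)$---reduces the case $\mu\in\FP^2$ to the case $\mu\in\FP\setminus\FP^2$. Hence only the ``bad'' case remains, namely
\[ i\not\equiv e\bmod 2,\qquad \sprod{w}{L}=\FP^i,\qquad \sprodq{w}{w}\in\fp\Tr(\FP^i). \]

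For the bad case I would exploit the group structure of the construction rather than argue about a single $w$. The set $G=\{\,y\in M:\sprod{y}{L}\subseteq\FP^i,\ \sprodq{y}{y}\in\Tr(\FP^i)\,\}$ is an additive group: the scale condition is clearly additive, and for $y,y'\in G$ the cross term $\Tr(\sprod{y}{y'})$ lies in $\Tr(\FP^i)$ because $\sprod{y}{M}\subseteq\FP^i$, so $\sprodq{y+y'}{y+y'}\in\Tr(\FP^i)$. By \Cref{resceichlerexists} the class $E_y\circ S(L)$ exists for each $y\in G$, and reducing \Cref{compeich} modulo the normal subgroup $S(L)$ shows that $y\mapsto E_y\circ S(L)$ is a homomorphism $\phi\colon G\to U(L)/S(L)$. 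By parts (d) and (e) of the preceding lemma, $\phi$ already annihilates every $y\in G$ with $\sprodq{y}{y}\notin\fp\Tr(\FP^i)$ or $\sprod{y}{L}\subsetneq\FP^i$. It therefore suffices to write the bad $w$ as a sum $w=w_1+w_2$ in $G$ with each $w_j$ of this favourable type, for then $E_w\circ S(L)=(E_{w_1}\circ S(L))(E_{w_2}\circ S(L))=S(L)$, i.e.\ $E_w^\mu\in S(L)$.

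The construction of this splitting is where the residue field hypothesis enters and is the main obstacle. Choosing an auxiliary $t\in M$ with $\sprod{w}{t}\O=\FP^i$, I would set up a one-parameter family $w_1=w_1(\alpha)$ and $w_2=w-w_1(\alpha)$ with $\alpha\in\o^\times$, built from $w$ and $t$, so that the norms $\sprodq{w_1}{w_1}$ and $\sprodq{w_2}{w_2}$ become (up to units) governed by $\alpha$-linear data whose leading trace contribution has valuation exactly $\nu_{\fp}(\Tr(\FP^i))$; the point is that $\sprodq{w}{w}\in\fp\Tr(\FP^i)=\Tr(\FP^{i+1})$ forces these leading terms to cancel in the sum $w_1+w_2$, which is consistent with both summands escaping $\fp\Tr(\FP^i)$. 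Requiring that $\alpha$ be a unit and that the two quantities controlling $\sprodq{w_1}{w_1}$ and $\sprodq{w_2}{w_2}$ be units modulo $\fp$ amounts to making $\alpha$ avoid three residue classes, which is precisely what \Cref{fieldsize} delivers when $\lvert\o/\fp\rvert\geq 4$. The delicate part, generalizing Hayakawa's \cite[5.6]{Hayakawa1968}, will be the valuation bookkeeping: verifying that $w_1,w_2\in G$, that each satisfies condition (d), and that the cross term $\Tr(\sprod{w_1}{w_2})$ is accounted for so that the two pieces sum back to the original $w$.
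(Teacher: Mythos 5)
Your overall plan is the paper's plan: reduce, via parts (a)--(e) of the preceding lemma, to the case $i\not\equiv e\pmod 2$, $\sprod{w}{L}=\FP^i$, $\sprodq{w}{w}\in\fp\Tr(\FP^i)$, and then write $w=w_1+w_2$ where both summands have norm of valuation exactly $\nu_\fp(\Tr(\FP^i))$, so that \Cref{wnorm}~(d) puts each $E_{w_j}$ into $S(L)$ and \Cref{compeich} reassembles $E_w$. Your group $G$ and the homomorphism $\phi\colon G\to U(L)/S(L)$ are a correct repackaging of \Cref{compeich}, \Cref{unitirr} and the normality of $S(L)$; the paper phrases the same thing as $E_w\circ S(L)=(E_{w-w'}\circ S(L))\circ(E_{w'}\circ S(L))$. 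The gap is that the construction of the splitting --- which is the entire mathematical content of the theorem and the only place where $\lvert \o/\fp \rvert \geq 4$ enters --- is not carried out, and the one construction you do sketch cannot work in all cases.

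Concretely, the paper must distinguish three cases according to $\nu_\fp(\sprodq{t}{t})$, where $t$ is the auxiliary vector, and your sketch covers only two of them. First, a normalization you omit: you pick $t$ with $\sprod{w}{t}\O=\FP^i$, but this does not guarantee a nonzero ``leading trace contribution'' --- a generator of $\FP^i$ may be skew, so one must first replace $t$ by $\lambda t$ with $\lambda\in\O$ to achieve $\Tr(\sprod{w}{t})\o=\Tr(\FP^i)$. Granting that, your family $w_1=\zeta w+\alpha t$ with $\zeta,\alpha\in\o^\times$ (three-residue-class avoidance via \Cref{fieldsize}) produces summands whose norms are dominated by the trace term $\zeta\alpha\Tr(\sprod{w}{t})$ only when $\sprodq{t}{t}\in\Tr(\FP^i)$; these are the paper's second and third cases. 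If instead $\sprodq{t}{t}\notin\Tr(\FP^i)$, say $\Tr(\FP^i)=\sprodq{t}{t}\fp^j$ with $j>0$, then for every choice of units the term $\alpha^2\sprodq{t}{t}$ dominates, so $\sprodq{w_1}{w_1}\notin\Tr(\FP^i)$, hence $w_1\notin G$ and $E_{w_1}$ does not even exist by \Cref{resceichlerexists}; no unit avoidance repairs this, and $\o$-scalars cannot either when $j$ is odd. The paper's first case handles this with an $\O$-scalar: $w'=w+\pi^j t$, where now the $t$-norm term $\Nr(\pi^j)\sprodq{t}{t}$ has valuation exactly $\nu_\fp(\Tr(\FP^i))$ and the cross term is negligible because $\Tr(\pi^j\sprod{w}{t})\in\Tr(\FP^{i+j})\subsetneq\Tr(\FP^i)$ --- a strict inclusion that itself requires $i+e$ odd, i.e.\ the prior reduction via \Cref{unitprime}~(c). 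To complete your argument you need this case distinction on $t$, together with the valuation bookkeeping you explicitly defer.
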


\begin{proof}
  We consider a vector $w\in M$, where $L=(\mc Ou \oplus \mc Ov)\perp M$ as before, and show $E_w\circ S(L)=S(L)$. To this end we will construct $w' \in M$ with $\sprodq{w'}{w'}\o=\sprodq{w - w'}{w-w'}\o=\Tr(\mathfrak P^i)$.

  By \Cref{fieldsize} (with $\gamma=\delta=-1$) there exists an element $\zeta\in\o^\times$ such that $\zeta-1\in\o^\times$.
  By \Cref{wscale} (e) we may assume that $\sprod{w}{L}=\mathfrak P^i$  whence there is a $t\in M$ with $\Tr(\sprod{w}{t})\o=\Tr(\mathfrak P^i)$. By \Cref{wnorm} (d) we may assume $\sprodq{w}{w}\in\mathfrak p\Tr(\mathfrak P^i)$.

  First case: $\sprodq{t}{t}\notin\Tr(\mathfrak P^i)$. Thus $\Tr(\mathfrak P^i)=\sprodq t t\mathfrak p^j$ with $j>0$. By \Cref{unitprime} (c) we may assume that $i+e$ is odd and therefore
  \[\Tr(\pi^j\sprod{w}{t})\in\Tr(\mathfrak P^{i+j})\subsetneq\Tr(\mathfrak P^i).\]
  Thus setting $w'=w+\pi^jt$ yields
  \begin{align*}
    \sprodq{w'}{w'}&=\sprodq w w+\Norm(\pi^j)\sprodq t t+\Tr(\pi^j\sprod{w}{t}),\\
    \sprodq{w - w'}{w-w'}&=\Norm(\pi^j)\sprodq t t,
  \end{align*}
  which implies $\sprodq{w'}{w'}\o = \sprodq{w - w'}{w - w'}\o = \sprodq t t \fp^j = \Tr(\FP^i)$ as desired.

  Second case: $\sprodq t t\in\mathfrak p\Tr(\mathfrak P^i)$. Here $w'=\zeta w+t$ yields
  \begin{align*}
    \sprodq{w'}{w'}&=\zeta^2\sprodq w w+\sprodq t t+\zeta \Tr(\sprod{w}{t}),\\ 
    \sprodq{w - w'}{w-w'}&=(\zeta-1)^2\sprodq w w+\sprodq t t+(\zeta-1)\Tr(\sprod{w}{t}), 
  \end{align*}
  which implies $\sprodq{w'}{w'}\o = \sprodq{w - w'}{w - w'}\o = \Tr(\sprod w t)\o = \Tr(\FP^i)$.

  Third case: $\sprodq{t}{t}\o=\Tr(\mathfrak P^i)$. Using \Cref{fieldsize} again we obtain $\alpha\in\o^\times$ such that
  \[\alpha+\zeta\frac{\Tr(\sprod{w}{t})}{\sprodq{t}{t}}\in\o^\times\text{ and }\alpha+(\zeta-1)\frac{\Tr(\sprod{w}{t})}{\sprodq{t}{t}}\in\o^\times.\]
  Setting $w'=\zeta w+\alpha t$ we obtain
  \begin{align*}
    \sprodq{w'}{w'}&=\zeta^2\sprodq{w}{w}+\alpha\left({\alpha+\zeta\frac{\Tr(\sprod{w}{t})}{\sprodq{t}{t}}}\right)\sprodq{t}{t},\\ 
    \sprodq{w-w'}{w - w'}&=(\zeta-1)^2\sprodq{w}{w}+\alpha\left({\alpha+(\zeta-1)\frac{\Tr(\sprod{w}{t})}{\sprodq{t}{t}}}\right)\sprodq{t}{t},
  \end{align*}
  which implies $\sprodq{w'}{w'}\o = \sprodq{w - w'}{w - w'}\o = \sprodq t t \o = \Tr(\FP^i)$.

  Thus in all cases we obtain $\sprodq{w'}{w'}\o=\sprodq{w-w'}{w - w'}\o=\Tr(\mathfrak P^i)$. By \Cref{resceichlerexists}, $E_w\circ S(L)$ and $E_{w-w'}\circ S(L)$ exist, and by \Cref{wnorm}~(d) we have $E_{w'}\circ S(L)=E_{w-w'}\circ S(L)=S(L)$. Thus $E_w\circ S(L)=(E_{w-w'}\circ S(L))\circ(E_{w'}\circ S(L))=S(L)$ by \Cref{compeich}.
\end{proof}

\begin{proof}[Proof of \Cref{symmetriesgenerators}]
  The sufficient condition is \Cref{apptheorem2}.
  For the necessary condition we now consider the case where the residue field is $\FF_2$.
  Now let
  \[ i=\begin{cases}0,&\text{if $e$ is even},\\1,&\text{if $e$ is odd},\end{cases} \]%
  and consider the lattice $L=H(i)\perp H(i)$.
  Then one can show that $U(L)\ne S(L)$: In the case $K=\QQ_2$ this was shown in \cite[6.3]{Hayakawa1968}, but the argument also works for the field $K$ under consideration.
  (Note that in \cite{Hayakawa1968} the case ``$e$ even'' is referred to as the ``ramified unit'' case and ``$e$ odd'' is referred to as the ``ramified prime'' case.)
\end{proof}

\bibliographystyle{amsplain}
\bibliography{unitary}

\providecommand{\bysame}{\leavevmode\hbox to3em{\hrulefill}\thinspace}
\providecommand{\MR}{\relax\ifhmode\unskip\space\fi MR }
\providecommand{\MRhref}[2]{%
  \href{http://www.ams.org/mathscinet-getitem?mr=#1}{#2}
}
\providecommand{\href}[2]{#2}
\begin{thebibliography}{10}

\bibitem{bayer-taelman2020}
Eva {Bayer-Fluckiger} and Lenny {Taelman}, \emph{{Automorphisms of even
  unimodular lattices and equivariant Witt groups}}, {J. Eur. Math. Soc.
  (JEMS)} \textbf{22} (2020), no.~11, 3467--3490 (English).

\bibitem{Boge1966}
Sigrid B\"{o}ge, \emph{Schiefhermitesche {F}ormen \"{u}ber {Z}ahlk\"{o}rpern
  und {Q}uaternionenschiefk\"{o}rpern}, J. Reine Angew. Math. \textbf{221}
  (1966), 85--112.

\bibitem{dieudonne1948}
Jean Dieudonn\'{e}, \emph{Sur les groupes classiques}, Publ. Inst. Math. Univ.
  Strasbourg (N.S.) no. 1 (1945), Hermann et Cie., Paris, 1948.

\bibitem{Frohlich1967}
Albrecht Fr\"{o}hlich, \emph{Local fields}, Algebraic {N}umber {T}heory
  ({P}roc. {I}nstructional {C}onf., {B}righton, 1965), Thompson, Washington,
  D.C., 1967, pp.~1--41.

\bibitem{Hayakawa1968}
Keiz\^{o} Hayakawa, \emph{Generation of local integral unitary groups over an
  unramified dyadic local field}, J. Fac. Sci. Univ. Tokyo Sect. I \textbf{15}
  (1968), 1--11.

\bibitem{Jacobowitz1962}
Ronald Jacobowitz, \emph{Hermitian forms over local fields}, Amer. J. Math.
  \textbf{84} (1962), 441--465.

\bibitem{Johnson1968}
Arnold~A. Johnson, \emph{Integral representations of hermitian forms over local
  fields}, J. Reine Angew. Math. \textbf{229} (1968), 57--80.

\bibitem{KirschmerHabil}
Markus Kirschmer, \emph{Definite quadratic and hermitian forms with small class
  number}, Habilitationsschrift, RWTH Aachen University, Aachen, Germany, 2016.

\bibitem{kirschmer2019unimod}
\bysame, \emph{{Automorphisms of even unimodular lattices over number fields}},
  {J. Number Theory} \textbf{197} (2019), 121--134.

\bibitem{Kirschmer2019}
\bysame, \emph{Determinant groups of {H}ermitian lattices over local fields},
  Arch. Math. (Basel) \textbf{113} (2019), no.~4, 337--347.

\bibitem{Kneser1956}
Martin Kneser, \emph{Klassenzahlen indefiniter quadratischer {F}ormen in drei
  oder mehr {V}er\"{a}nderlichen}, Arch. Math. (Basel) \textbf{7} (1956),
  323--332.

\bibitem{mcmullen2016}
Curtis~T. {McMullen}, \emph{{Automorphisms of projective K3 surfaces with
  minimum entropy}}, {Invent. Math.} \textbf{203} (2016), no.~1, 179--215.

\bibitem{OguisoYu2020}
Keiji {Oguiso} and Xun {Yu}, \emph{{Minimum positive entropy of complex
  Enriques surface automorphisms}}, {Duke Math. J.} \textbf{169} (2020),
  no.~18, 3565--3606.

\bibitem{Omeara-Pollack1965}
O.~Timothy O'Meara and Barth Pollak, \emph{Generation of local integral
  orthogonal groups}, Math. Z. \textbf{87} (1965), 385--400.

\bibitem{Omeara-Pollack1965b}
\bysame, \emph{Generation of local integral orthogonal groups. {II}}, Math. Z.
  \textbf{93} (1966), 171--188.

\bibitem{Serre1979}
Jean-Pierre Serre, \emph{Local fields}, Graduate Texts in Mathematics, vol.~67,
  Springer-Verlag, New York-Berlin, 1979, Translated from the French by Marvin
  Jay Greenberg.

\bibitem{shimura1964}
Goro Shimura, \emph{Arithmetic of unitary groups}, Ann. of Math. (2)
  \textbf{79} (1964), 369--409.

\bibitem{Xu1995}
Fei Xu, \emph{Generation of integral orthogonal groups over dyadic local
  fields}, Pacific J. Math. \textbf{167} (1995), no.~2, 385--398.

\end{thebibliography}

\end{document}